\def\hmc{{\mathsf M}_p(n)}
\def\pcc{{\mathsf C}_p(n)}
\def\qc{{\mathcal A}_p(S_n)}
\def\hmco{{\mathsf M}_p(kp+1)}
\def\hmcot{{\mathsf M}_3(3k+1)}
\def\apg{{\mathcal A}_p(G)}
\def\ipn{{\mathsf I}_p(n)}
\def\lpn{{\mathsf L}_p(n)}
\def\lk{{\mathsf l}{\mathsf k}}
\def\pp{{\mathsf P}}
\def\ca{{\mathcal A}}
\def\cs{{\mathcal S}}
\def\cm{{\mathcal M}}
\def\cn{{\mathcal N}}
\def\cc{{\mathbb C}}
\def\mm{{\mathbf m}}
\def\e{{\mathsf e}}
\def\h{{\mathsf h}}
\def\f{{\mathsf f}}
\def\g{{\mathsf g}}
\def\p{{\mathsf p}}
\def\q{{\mathsf q}}
\def\c{{\mathsf c}}
\def\d{{\mathsf d}}
\def\s{{\mathsf s}}
\def\ov{\overline}
\def\op{\ov{P}}
\def\hz{\hat{0}}
\def\ho{\hat{1}}
\def\de{\Delta}
\def\rk{{\mathsf r}{\mathsf a}{\mathsf n}{\mathsf k}}
\def\rh{\widetilde{H}}
\def\ch{{\rm ch}}
\newtheorem{theorem}{Theorem}[section]
\newtheorem{lemma}[theorem]{Lemma}
\newtheorem{proposition}[theorem]{Proposition}
\newtheorem{corollary}[theorem]{Corollary}
\newtheorem{remark}[theorem]{Remark}
\newtheorem{conjecture}[theorem]{Conjecture}
\begin{document}

\title[Top homology]{Top homology of hypergraph matching complexes, $p$-cycle
complexes and Quillen complexes of symmetric groups}

\author[John Shareshian]{John Shareshian$^{1}$}
\footnotetext[1]{Supported by National Science Foundation grants
DMS 0300483 and DMS 0604233.}
\address{Department of Mathematics, Washington University, St.
Louis, MO 63130} \email{shareshi@math.wustl.edu}

\author[Michelle  Wachs]{Michelle L. Wachs$^{2}$}
\footnotetext[2]{Supported by National Science Foundation grants
DMS 0302310 and DMS 0604562.}
\address{Department of Mathematics, University of Miami, Coral
Gables, FL 33124-4250}
\email{wachs@math.miami.edu}

\subjclass[2000]{20D30 (05E25 20B30 20E15) }

\begin{abstract}
We investigate the representation of a symmetric group $S_n$ on the homology of its Quillen complex at a prime $p$.  For homology groups in small codimension, we derive an explicit formula for this representation in terms of the
representations of symmetric groups on homology groups of $p$-uniform hypergraph matching complexes.  We conjecture an
explicit formula for the representation of $S_n$ on the top homology group of the corresponding hypergraph matching
complex when $n \equiv 1 \bmod p$.  Our conjecture follows from work of Bouc when $p=2$, and we prove the conjecture when
$p=3$.
\end{abstract}

\maketitle

\section{Introduction}

Our purpose is to obtain information on the homology of Quillen
complexes of symmetric groups (at odd primes) by studying
hypergraph matching complexes.  This method was first investigated
(for the prime two) by S. Bouc in \cite{Bo} and developed further
by R. Ksontini in \cite{Ks1,Ks2,Ks3} and later by Shareshian in
\cite{Sh}. We concentrate here on the top (and near top) homology of the two
complexes just named, along with that of a complex called the
$p$-cycle complex, which is essential for the transfer of
information that we obtain.  We work throughout this paper with complex coefficients.

Let us now define the objects with which we are concerned (further
basic definitions appear in Section \ref{defsec}) and provide some
more history and motivation along with a description of our
(somewhat technical) results.  Let $G$ be a finite group and let
$p$ be a prime.  The {\it Quillen complex} $\de\ca_p(G)$ is the
order complex of the partially ordered set $\ca_p(G)$ of all
nontrivial elementary abelian $p$-subgroups of $G$.  Interest in
these complexes was sparked by the paper \cite{Qu} of D. Quillen.
(Earlier work on the closely related Brown complex $\de\cs_p(G)$,
which is the order complex of the poset of all nontrivial
$p$-subgroups of a not necessarily finite group $G$, was done by
K. S. Brown in \cite{Br2,Br1}.)  Among many other things, it is
shown in \cite{Qu} that if $G$ is a group of Lie type in
characteristic $p$ then $\de\ca_p(G)$ is homotopy equivalent to
the building for $G$. Thus in this case the (reduced) homology of
$\de\apg$ is concentrated in a single dimension.  Moreover, as
noted in \cite{We}, it can be shown that the representation of $G$
on this unique nontrivial homology group obtained from the natural
action of $G$ on $\ca_p(G)$ is the same as that of $G$ on the
unique nontrivial homology group of the building, namely, the
Steinberg representation.

Given the results just mentioned, it is natural to investigate the
homology of Quillen complexes of symmetric and alternating groups.
(Note that if $p$ is odd then $\ca_p(S_n)=\ca_p(A_n)$.)  The
homology of $\de\ca_p(S_n)$ need not be concentrated in a single
dimension (see \cite[Section 15]{Ks1}).  It seems quite difficult
to determine the representation of $S_n$ on each nontrivial
homology group of its Quillen complex, or even the Betti numbers
of the complex.  Two alternatives suggest themselves in the search
for results analogous to those for the Lie type groups.  Namely,
one could investigate the Lefschetz virtual character (that is, the
alternating sum of the characters for the representations on
homology groups), as has been done successfully in various
combinatorial settings (see \cite{Bo2} for results along this line
for Quillen complexes of symmetric groups), or one could
investigate the top homology group. As mentioned above, we make
the second choice.

For a prime $p$ and an integer $n$, the {\it $p$-cycle complex}
$\pcc$ is a simplicial complex with one vertex for each subgroup
of $S_n$ that is generated by a $p$-cycle.  A collection of such
vertices forms a face of $\pcc$ if and only if the subgroups in
question together generate an abelian group. 
For any integers
$p,n$ the {\it hypergraph matching complex $\hmc$} is the
simplicial complex with one vertex for each subset of size $p$
from the $n$-set $[n]$, with a collection of such vertices forming a
face of $\hmc$ if and only if the sets in question are pairwise
disjoint.   One can view the vertices of $\hmc$ as  hyperedges of the complete $p$-uniform hypergraph on $[n]$ and the faces of $\hmc$ as  $p$-matchings on $[n]$.  For $p=2,3$ the complexes $\pcc$ and $\hmc$ are
isomorphic, as for each $X \subseteq [n]$ of size $p$, there is
unique cyclic group of order $p$ in $S_n$ having support $X$.
Matching complexes and related complexes
have been  studied in the literature  for their intrinsic 
combinatorial interest and in connection with applications
in various fields of mathematics,  see \cite {Wa} for a survey and
see \cite{Jo1,Jo2,Jo3, SW} for more recent developments.

As mentioned above, the idea of using $\hmc$ to study
$\de\ca_p(S_n)$ is originally due to Bouc, who studied the case
$p=2$.  
Various interesting results on
${\mathsf M}_2(n)$ appear in \cite{Bo}, including a complete
description of the representation of $S_n$ on its homology
groups (see (\ref{bouc})).  However, it seems quite difficult to use information
about ${\mathsf M}_2(n)$ to obtain results on $\de\ca_2(S_n)$.
Such a transfer of information is easier when the prime in
question is odd.  The first evidence of this appears in the thesis
\cite{Ks1} of Ksontini, where 
a relationship between  $\pcc$ and $\de\qc$ is discussed.  
There is an obvious simplicial map
from $\pcc$ to $\hmc$, induced by the map on vertices which sends
a cyclic group generated by a $p$-cycle to its support, and it is
natural to try to use this map to find useful relationships
between the topology of $\hmc$ and that of $\pcc$.  This is also done
quite successfully in \cite{Ks1}.  In \cite{Sh}, a result of A.
Bj\"orner, Wachs and V. Welker  \cite{BjWaWe} is used to make
a precise statement showing how the homotopy type of $\pcc$ is
completely determined by that of the complexes ${\mathsf M}_p(m)$
for all $m \leq n$ satisfying $m \equiv n \bmod p$. Since we are
interested here in representations on homology, we need an
$S_n$-equivariant homology version of this result. Such a result was
already proved in \cite{BjWaWe}.

It is straightforward to show that, for given $n,p$, the complexes
$\de\qc$, $\pcc$ and $\hmc$ all have dimension $t:=t(n,p):=\lfloor
\frac{n}{p} \rfloor-1$.  The first key idea for our work is an equivariant version of a result of Ksontini \cite[Proposition 8.1]{Ks1} stating that
if  $i$ is within $p-3$ of $t$ then
\begin{equation*} 
\rh_i(\de\qc)  \cong_{S_n} \rh_i(\pcc),
\end{equation*}
where $\cong_{S_n}$ denotes isomorphism of $\cc[S_n]$-modules. 
Since this result appears only in  the nonequivariant
form in the thesis of Ksontini  \cite{Ks1}, we provide a proof  in Section
\ref{qcpc} (see Theorem \ref{topcor}). 
In Section \ref{hmpc} we apply the result from
\cite{BjWaWe} in order to obtain a complicated but explicit
formula expressing the Frobenius characteristic of the
$\cc[S_n]$-module $\rh_*(\pcc)$ in terms of the characteristics of
the $\cc[S_m]$-modules $\rh_{*}({\mathsf M}_p(m))$ for all $m
\leq n$ such that $m \equiv n \bmod p$ (see Theorem \ref{fr}).

This leaves us with the problem of determining the representation
of $S_n$ on $\rh_{t-i}(\hmc)$ when $i \le p-3$.  In Section \ref{hyp} we give a
conjectural description of this representation when $i=0$ and $n=kp+1$ for
some $k$ (so $t=k-1$). Namely, Conjecture \ref{topcon} says that
$\rh_{k-1}({\mathsf M}_p(kp+1))$ is the $\cc[S_{kp+1}]$-submodule
generated by all simple submodules of the chain space
$C_{k-1}({\mathsf M}_p(kp+1))$ that are isomorphic to Specht
modules $\cs^\lambda$ where $\lambda$ has $k+1$ parts.  (As shown in
Section \ref{hyp}, if $C_{k-1}({\mathsf M}_p(kp+1))$ has a
submodule isomorphic to $\cs^\mu$ then $\mu$ has at most $k+1$
parts.)  It follows from the work of Bouc mentioned above that
Conjecture \ref{topcon} is true when $p=2$, and from work of Ksontini \cite{Ks3} that it is true when $k\le 2$.  We prove the
conjecture in the case $p=3$ by showing that the representation is isomorphic to the direct sum of
Specht modules indexed by partitions of $3k+1$ into $k+1$ odd parts.
As a corollary we have that  the representation of $S_{3k+1}$ on the top homology of the Quillen complex $\de \mathcal A_3(S_{3k+1})$ also has this nice decomposition.

\begin{center}

{\it Acknowledgments}

\end{center}

We thank the anonymous referee for helpful comments.

\section{Preliminaries: Definitions, notation and basic results} \label{defsec}

For a finite poset $P$, $\de P$ will denote the abstract
simplicial complex whose $k$-simplices are chains of length $k$
from $P$. For an abstract simplicial complex $\de$, $|\de|$ will
denote a geometric realization of $\de$ (note that all such
realizations are homeomorphic).  We will not
distinguish between $\de$ and $|\de|$.  Also, $\pp\de$ will denote
the poset of nonempty faces of a complex $\de$ and $\rh_i(\de)$
will denote the $i^{th}$ reduced simplicial homology group of
$\de$, with complex coefficients. A simplicial action of a group
$G$ on a complex $\de$ determines representations of $G$ on the
chain spaces and homology groups of $\de$.  (Note that an order
preserving action of $G$ on a poset $P$ determines a simplicial
action of $G$ on $\de P$.) We note here that the natural action of $S_n$ on $[n]$ induces a simplicial action
on $\hmc$ and that the action of $S_n$ by conjugation on the set of its $p$-subgroups induces simplicial actions on both $\pcc$ and $\de\qc$.

It is well known that for any complex
$\de$, $\de\pp\de$ is the barycentric subdivision of $\de$, so if
$G$ is a group acting (simplicially) on $\de$ then there is a
$G$-equivariant homeomorphism from $\de$ to $\de\pp\de$. The
next result follows, where $\cong_G$ denotes isomorphism of
$\cc[G]$-modules.

\begin{lemma}
Let $\de$ be a finite simplicial and let $G$ be a group acting
simplicially on $\de$.  Then for any integer $i$ we have
\[
\rh_i(\de) \cong_G \rh_i(\de\pp\de).
\]
\label{subdiv}
\end{lemma}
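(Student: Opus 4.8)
The plan is to deduce the lemma from the homeomorphism invariance of singular homology together with the naturality of the comparison between simplicial and singular homology with respect to simplicial maps. First I would note that $\de\pp\de$, being the order complex of the face poset $\pp\de$, inherits a simplicial $G$-action: each $g \in G$ permutes the faces of $\de$ in an inclusion-preserving fashion, hence acts as an order-preserving bijection on $\pp\de$, hence as a simplicial automorphism of $\de\pp\de$. By the remark preceding the lemma there is a $G$-equivariant homeomorphism $f \colon |\de| \to |\de\pp\de|$ (concretely, the identity map on the common underlying space, carrying each barycenter to itself). Since $f$ intertwines the two $G$-actions, the induced isomorphism on reduced singular homology $f_{\ast} \colon \rh_i^{\mathrm{sing}}(|\de|) \to \rh_i^{\mathrm{sing}}(|\de\pp\de|)$ is an isomorphism of $\cc[G]$-modules.

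Next I would invoke the classical fact that for any finite simplicial complex $\Gamma$ there is an isomorphism $\theta_\Gamma \colon \rh_i(\Gamma) \to \rh_i^{\mathrm{sing}}(|\Gamma|)$ that is natural with respect to simplicial maps. Taking $\Gamma = \de$ and letting $\phi = g$ range over the simplicial automorphisms comprising the $G$-action, naturality gives $\theta_\de \circ g_{\ast} = |g|_{\ast} \circ \theta_\de$; that is, $\theta_\de$ is an isomorphism of $\cc[G]$-modules, and likewise for $\de\pp\de$. Composing the three $\cc[G]$-module isomorphisms yields
\[
\rh_i(\de) \;\cong_G\; \rh_i^{\mathrm{sing}}(|\de|) \;\cong_G\; \rh_i^{\mathrm{sing}}(|\de\pp\de|) \;\cong_G\; \rh_i(\de\pp\de),
\]
which is the assertion. (The degenerate case $\de = \emptyset$, where both sides reduce to $\cc$ with trivial $G$-action, is immediate.)

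The point requiring the most care — and the reason for routing the argument through singular homology rather than writing down an explicit chain equivalence — is that the usual simplicial subdivision chain map $C_{\ast}(\de) \to C_{\ast}(\de\pp\de)$ depends on a chosen linear ordering of the vertices of $\de$ and so need not be $G$-equivariant on the nose; by contrast, the tautological homeomorphism and the simplicial-to-singular comparison maps are natural, hence automatically intertwine the $G$-actions. Everything else is bookkeeping.
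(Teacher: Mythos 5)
Your argument is correct and is essentially the same one the paper has in mind: the paper derives the lemma directly from the $G$-equivariant homeomorphism between $|\de|$ and $|\de\pp\de|$ afforded by barycentric subdivision, and you simply make explicit the intermediate step of passing through singular homology and the natural comparison isomorphism to get a clean chain of $\cc[G]$-module isomorphisms. (As a side remark, your caution about the simplicial subdivision chain map is a bit overstated: the operator $\mathrm{sd}\colon C_*(\de)\to C_*(\de\pp\de)$ defined by coning from barycenters is natural with respect to simplicial maps and hence $G$-equivariant, so a direct chain-level argument would also work; but the singular-homology route you chose is perfectly sound and arguably cleaner to justify.)
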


For a finite lattice $L$ with minimum element $\hz$ and maximum
element $\ho$,  let $\bar{L}$ denote the poset obtained from $L$
by removing $\hz$ and $\ho$ and let $\tilde L$  denote the subposet
of $\bar{L}$ consisting of those elements which can be obtained by
taking the meet (in $L$) of some set of maximal elements of
$\bar{L}$.  The next result is a well known equivariant version of
Rota's Cross-cut Theorem (\cite{Ro}).

\begin{lemma} Let $G$ be a group acting on a finite lattice $L$.
Then for every integer $i$, we have
\[
\rh_i(\de\bar{L}) \cong_G \rh_i(\de \tilde L).
\]
\label{maxint}
\end{lemma}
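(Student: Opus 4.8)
The plan is to realize $\de\tilde L$ as the order complex of the image of a $G$-equivariant closure operator on $\bar L$, and then appeal to an equivariant form of the poset homotopy lemma. I first dispose of the degenerate case $\bar L = \emptyset$ (i.e.\ $|L| \le 2$): then $\tilde L = \emptyset$ as well and both sides of the asserted isomorphism equal $\rh_{-1}$ of the empty complex, with $G$ acting trivially. So assume $\bar L \ne \emptyset$. Then the maximal elements of $\bar L$ are exactly the coatoms of $L$, and every $x \in \bar L$ lies below a coatom, since any maximal element of the nonempty finite set $\{y : x \le y < \ho\}$ is a coatom above $x$. Writing $C$ for the set of coatoms of $L$, define $\phi \colon \bar L \to \bar L$ by $\phi(x) = \bigwedge \{c \in C : c \ge x\}$ (the meet taken in $L$); the set is nonempty, and $\hz < x \le \phi(x) \le c < \ho$ for any $c \in C$ with $c \ge x$, so $\phi(x) \in \bar L$. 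This is the map underlying Rota's proof of the cross-cut theorem \cite{Ro}.

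Next I would record the structural properties of $\phi$. It is order preserving, since $x \le y$ gives $\{c \in C : c \ge y\} \subseteq \{c \in C : c \ge x\}$ and hence $\phi(x) \le \phi(y)$. It satisfies $\phi(x) \ge x$, and it is idempotent because $\{c \in C : c \ge \phi(x)\} = \{c \in C : c \ge x\}$ (the nontrivial inclusion using that $\phi(x) \le c$ for each $c \in C$ with $c \ge x$). Its image is exactly $\tilde L$: every $\phi(x)$ is a meet of coatoms lying in $\bar L$, and conversely if $y = \bigwedge S \in \bar L$ for some $S \subseteq C$, then $S \subseteq \{c \in C : c \ge y\}$ forces $\phi(y) \le y$, so $\phi(y) = y$; in particular $\phi$ restricts to the identity on $\tilde L$. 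Finally, $G$ acts by lattice automorphisms, so it permutes $C$ and commutes with meets, whence $\phi(gx) = g\,\phi(x)$; thus $\phi$, the inclusion $\iota \colon \tilde L \hookrightarrow \bar L$, and the corestriction $\psi \colon \bar L \to \tilde L$ of $\phi$ are $G$-equivariant poset maps with $\iota\psi = \phi$ and $\psi\iota = \mathrm{id}_{\tilde L}$.

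To conclude I would invoke the equivariant poset homotopy lemma: if $f, g \colon P \to Q$ are $G$-equivariant poset maps with $f(x) \le g(x)$ for all $x$, then $|\de f|$ and $|\de g|$ are $G$-homotopic. This comes from the order-preserving map $P \times \{0 < 1\} \to Q$ sending $(x,0) \mapsto f(x)$ and $(x,1) \mapsto g(x)$, composed with the natural (hence $G$-equivariant, with $G$ acting trivially on $\{0<1\}$) homeomorphism $|\de(P \times \{0<1\})| \cong |\de P| \times [0,1]$. Applying this with $f = \mathrm{id}_{\bar L} \le \phi = g$ shows that $|\de\phi| = |\de\iota| \circ |\de\psi|$ is $G$-homotopic to the identity on $|\de\bar L|$, while $|\de\psi| \circ |\de\iota| = \mathrm{id}$ on $|\de\tilde L|$. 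Passing to reduced homology, $\iota_*$ and $\psi_*$ become mutually inverse $\cc[G]$-module isomorphisms $\rh_i(\de\tilde L) \cong \rh_i(\de\bar L)$ for every $i$, which is the claim.

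The only point demanding care is the equivariance of this last lemma: one must observe that the classical nonequivariant argument is assembled entirely from natural constructions — the product with the two-element poset and the standard homeomorphism with the topological product — so it upgrades verbatim to the $G$-equivariant setting. Everything else is routine lattice-theoretic bookkeeping.
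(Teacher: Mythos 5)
Your proof is correct, but it takes a different route from the paper's. The paper defines $\iota\colon\tilde L\hookrightarrow\bar L$ and applies the equivariant Quillen fiber theorem (\cite[Corollary 9.3]{BjWaWe}), observing that each fiber $\iota_{\geq y}=\{x\in\tilde L: x\geq y\}$ is a cone (hence acyclic) because it has a unique minimum, namely the meet of all coatoms above $y$. You instead package that same meet into a $G$-equivariant closure operator $\phi$ on $\bar L$ with image $\tilde L$, and finish with the equivariant order-homotopy lemma ($f\leq g$ pointwise implies $|\de f|\simeq_G|\de g|$). The pivotal observation is identical in both arguments --- the meet of the coatoms above $y$ lands in $\tilde L$ and is the least such element --- but you deduce the homology isomorphism from a more elementary and self-contained device, whereas the paper reuses the heavier fiber theorem it already needed elsewhere. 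One small economy available to you: the degenerate case $\bar L=\emptyset$ is handled automatically since $\tilde L\subseteq\bar L$ by definition, so the case split is not strictly necessary. Also, your appeal to the homeomorphism $|\de(P\times\{0<1\})|\cong|\de P|\times[0,1]$ is fine but worth flagging as the standard prism triangulation of a product of order complexes, since $\de(P\times Q)$ is not literally a product of complexes.
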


\begin{proof}
Let $\iota:\tilde L \rightarrow \bar{L}$ be the identity embedding.
Since $\iota$ is $G$-equivariant, our claim will follow from the
equivariant version of the Quillen fiber theorem, see
\cite[Corollary 9.3]{BjWaWe}, once we show that $\de\iota_{\geq y}$ is acyclic for all $y \in \bar{L}$, where $\iota_{\geq y}$ is the induced subposet $\{x \in \tilde L:x \geq y\}$ of $\tilde L$.   For any such $y$,
$\iota_{\geq y}$ contains a unique minimum element, namely, the
meet of all maximal elements of $\bar{L}$ which lie above $y$. 
Hence $\de\iota_{\geq y}$ is a cone and is therefore acyclic. 
\end{proof}

Let us now review some standard material on symmetric functions.  See \cite{MacD,St} for the relevant results. We will use the standard notation $\e,\h,\p,\s$ for the elementary, complete homogeneous, power sum, and Schur symmetric functions, respectively.  We hope that the use of $\p$ for a symmetric function and $p$ for a prime will not confuse the reader.  The irreducible representation of $S_n$ corresponding to  the partition $\lambda$ of $n$ will be denoted by $\cs^\lambda$.  The Frobenius characteristic is the isomorphism from the ${\mathbb N}_0$-graded ring  whose $n^{th}$ component consists of all (virtual and actual) representations of  $S_n$ to the graded ring of symmetric functions over the integers that sends $\cs^\lambda$ to $\s_\lambda$ for each partition $\lambda$.  In particular, the Frobenius characteristics of the trivial and alternating representations of $S_n$ are $\h_n$ and $\e_n$, respectively.  The Frobenius characteristic of a representation on the space $V$ will be denoted by $\ch V$

The plethysm of symmetric functions $\f,\g$ will be denoted by $\f[\g]$ rather than $\f \circ \g$ (which is used in \cite{MacD}).  Plethysm is used here to manipulate representations of symmetric groups induced from stabilizers of set partitions.  Say $n=kl$ with $1<k,l<n$.  The stabilizer $H$ in $S_n$ of a partition of $[n]$ into $k$ blocks $X_1,\ldots,X_k$ of size $l$ is isomorphic to the wreath product $S_k[S_l]$.  That is, $H$ contains the normal subgroup $K=\prod_{i=1}^{k}S_{X_i} \cong S_l^k$ (which we will call the {\it kernel} of $H$), and $H/K \cong S_k$.  We get a complement $C$ to $K$ in $H$ as follows. Fix for each $(i,j) \in [k] \times [k]$ a bijection $\psi_{ij}:X_i \rightarrow X_j$ such that
\begin{itemize}
\item $\psi_{ij}\psi_{jm}=\psi_{im}$ for all $i,j,m$, and
\item $\psi_{ji}=\psi_{ij}^{-1}$ for all $i,j$.
\end{itemize}
Define $\alpha:S_k \rightarrow S_n$ by $x\alpha(\tau):=x\psi_{i,i\tau}$ for $\tau \in S_k$ and $x \in X_i$.  Then $\alpha$ is an injective homomorphism, and it is straightforward to check that $C=Image (\alpha)$ is a complement to $K$ in $H$.  All such $C$ are conjugate in $H$, and we call such a $C$ a {\it standard complement} to $K$.  If $\phi:S_k \rightarrow GL(V)$ and $\rho:S_l \rightarrow GL(W)$ are representations, we get (after identifying $C$ with $S_k$ and each $S_{X_i}$ with $S_l$) a representation $\phi[\rho]:H \rightarrow GL(V \otimes \bigotimes^{k} W)$ such that for $g \in S_{X_i}$ we have
\[
(v \otimes w_1 \otimes \ldots \otimes w_i \otimes \ldots \otimes w_k)\phi[\rho](g) =
v \otimes w_1 \otimes \ldots \otimes w_i\rho(g) \otimes \ldots \otimes w_k,
\]
and for $c \in C$ we have
\[
(v \otimes w_1 \otimes \ldots \otimes w_k)\phi[\rho](c) =
v\phi(c) \otimes w_{1c} \ldots \otimes w_{kc}.
\]
If $\phi,\psi$ have Frobenius characteristics $\f,\g$, respectively, then the Frobenius characteristic of the induced representation
$\phi[\rho]\uparrow_{H}^{S_n}$ is $\f[\g]$.

\section{Relating the top homology of the $p$-cycle complex to
that of the Quillen complex} \label{qcpc}

Much (but not all) of the material in this section can be found in
\cite{Ks1} and \cite{Sh}. We include details here for the sake of
self-containment.

Say $P \in \qc$ has orbits $\Omega_1,\ldots,\Omega_r$ on $[n]$.
For each $i \in [r]$, we have a homomorphism
\begin{equation} \label{omi}
\omega_i:P \rightarrow S_{\Omega_i}
\end{equation}
determined by the action of $P$ on $\Omega_i$.  Set
\[
\ov{P}:=\prod_{i=1}^{r}\omega_i(P) \leq S_n.
\]
Each $\omega_i(P)$ is a quotient of the elementary abelian
$p$-group $P$ and is therefore an elementary abelian $p$-group.
Since elements of $S_n$ whose supports are disjoint commute, we
see that $\op$ is elementary abelian, that is, $\ov{P} \in \qc$.
Certainly $P \leq \ov{P}$.  Each $\omega_i(P)$ is a transitive
abelian subgroup of $S_{\Omega_i}$, and is therefore regular. In
other words, $|\omega_i(P)|=|\Omega_i|$. It follows that
\begin{equation} \label{rkeq}
\rk(\op)=\sum_{i=1}^{r}\rk(\omega_i(P))
=\sum_{i=1}^{r}\log_p(|\Omega_i|).
\end{equation}

\begin{lemma}
Let  $P \in \qc$.  Then $\rk(P) \leq
\lfloor \frac{n}{p} \rfloor$.\label{ranklem}
\end{lemma}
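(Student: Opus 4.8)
The plan is to pass from $P$ to the larger group $\op$ constructed above, for which the rank is computed exactly by (\ref{rkeq}), and then invoke an elementary numerical inequality. Since $P \le \op$ and both groups are elementary abelian, we have $\rk(P) \le \rk(\op)$. Writing $|\Omega_i| = p^{a_i}$ with $a_i \in {\mathbb N}_0$ (legitimate because each $\omega_i(P)$ is a regular elementary abelian $p$-subgroup of $S_{\Omega_i}$, so its order is a power of $p$ equal to $|\Omega_i|$), formula (\ref{rkeq}) reads $\rk(\op) = \sum_{i=1}^r a_i$, while $n = \sum_{i=1}^r |\Omega_i| = \sum_{i=1}^r p^{a_i}$. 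Thus the claim reduces to showing $\sum_{i=1}^r a_i \le \lfloor n/p \rfloor$ whenever $n = \sum_{i=1}^r p^{a_i}$.

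For this I would record the inequality $a \le p^{a-1}$, valid for every integer $a \ge 1$; it is immediate by induction on $a$ (or from $p^{a-1} \ge 2^{a-1} \ge a$). Equivalently, $a_i \le p^{a_i}/p$ for every $i$, the case $a_i = 0$ being trivial. Summing over $i$ gives $\sum_{i=1}^r a_i \le \frac{1}{p}\sum_{i=1}^r p^{a_i} = n/p$, and since the left-hand side is an integer we conclude $\rk(P) \le \sum_{i=1}^r a_i \le \lfloor n/p\rfloor$.

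There is no serious obstacle here: the regularity of each $\omega_i(P)$ on $\Omega_i$ is already established in the discussion preceding the lemma, and the only point requiring any care is that orbits of size $1$ (fixed points of $P$) contribute $0$ to the rank and $1$ to $n$ — but this is absorbed harmlessly by the inequality $a_i \le p^{a_i}/p$ at $a_i = 0$, so no separate bookkeeping is needed.
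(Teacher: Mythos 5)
Your proof is correct. It relies on the same two ingredients as the paper's argument — the reduction to $\op$ via (\ref{rkeq}), and the elementary inequality $a \le p^{a-1}$ for $a \ge 1$ — but you package them differently: instead of the paper's induction on $n$ (peeling off one nontrivial orbit at a time and applying the inductive hypothesis to the remainder), you sum the bound $a_i \le p^{a_i}/p$ over all orbits simultaneously, getting $\rk(\op) \le n/p$ in one stroke and then rounding down because the left side is an integer. This is a modest but genuine simplification: it eliminates the induction and the associated base case, and it handles size-one orbits uniformly rather than by special remark. The paper's version, on the other hand, is structured to parallel the slightly harder estimate proved in Theorem~\ref{topcor} (the implication (\ref{imp}) reuses the decomposition $\rk(P) = \log_p|\Omega_1| + \rk(Q)$ and the bound $a - p^{a-1} \le 2 - p$), so the inductive framing earns its keep later in the paper even if it is not the shortest route to this particular lemma.
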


\begin{proof}
Our claim holds by observation when $n \leq p$, and we proceed by
induction on $n$.  It suffices to prove the claim when $P$ is such that $P=\ov{P}$.  Given the
orbits $\Omega_i$ and maps $\omega_i$ as defined above, we may assume that
$|\Omega_1|>1$.  Set
\begin{equation*}
Q=\prod_{i=2}^{r}\omega_i(P) \leq S_{\bigcup_{i=2}^{r}\Omega_i}.
\end{equation*}
By equation (\ref{rkeq}) and our inductive hypothesis, we have
\begin{equation}\label{rankq}
\rk(P)=\log_p(|\Omega_1|)+\rk(Q) \leq \log_p(|\Omega_1|)+\lfloor
\frac{n-|\Omega_1|}{p} \rfloor.
\end{equation}
 The lemma now
follows from the fact that for any positive
integer $a$, we have
\[
\lfloor \frac{n-p^a}{p} \rfloor +a \leq \lfloor \frac{n}{p}
\rfloor.
\]
\end{proof}

\begin{theorem}[Ksontini {\cite[Proposition 8.1]{Ks1}}]
Let $p$ be a prime and $n$ a positive integer.  Set $t=\lfloor
\frac{n}{p} \rfloor-1$.  Then
\begin{enumerate}
\item $\dim(\de\qc)=\dim(\pcc)=t$, and
\item $\rh_{j}(\de\qc) \cong_{S_n} \rh_{j}(\pcc)$ for all $j\ge t-(p-3)$.
\end{enumerate}
\label{topcor}
\end{theorem}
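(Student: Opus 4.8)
Set $t=\lfloor n/p\rfloor-1$ and assume $n\ge p$, since otherwise $\de\qc$ and $\pcc$ are both empty. I would dispose of assertion (1) quickly: a chain in $\qc$ has strictly increasing ranks, so by Lemma~\ref{ranklem} it has at most $\lfloor n/p\rfloor$ members, while the nested subgroups generated by $1,2,\dots,\lfloor n/p\rfloor$ disjoint $p$-cycles span a simplex of dimension $t$; and if $\langle c\rangle\ne\langle c'\rangle$ are vertices of $\pcc$ lying in a common face then $\langle c,c'\rangle$ is elementary abelian, so the supports of $c$ and $c'$ are equal or disjoint (a partial overlap would produce a $\langle c,c'\rangle$-orbit of size strictly between $p$ and $2p$, impossible for an elementary abelian group), and equal supports would force $\langle c\rangle=\langle c'\rangle$ since a $p$-cycle is self-centralising in $S_p$; hence a face of $\pcc$ also has at most $\lfloor n/p\rfloor$ vertices, realised again by $\lfloor n/p\rfloor$ disjoint $p$-cycles. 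Thus $\dim\de\qc=\dim\pcc=t$.

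For (2) the plan is to pass from $\pcc$ to an order complex and then delete a small piece. Let $\mathcal Q$ be the subposet of $\qc$ of those $P$ all of whose orbits have size $1$ or $p$; equivalently $P\in\mathcal Q$ iff $P$ is generated by its $p$-cycles, in which case $P=\prod_i\langle c_i\rangle$ uniquely for disjoint $p$-cycles $c_i$. Then $\{\langle c_1\rangle,\dots,\langle c_k\rangle\}\mapsto\prod_i\langle c_i\rangle$ is an $S_n$-equivariant poset isomorphism from $\pp\pcc$ onto $\mathcal Q$ (well defined and surjective by the disjointness just noted, injective and order-reflecting because $\langle c_1\rangle,\dots,\langle c_k\rangle$ are precisely the vertices of $\pcc$ contained in $\prod_i\langle c_i\rangle$). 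Hence by Lemma~\ref{subdiv}, $\rh_j(\pcc)\cong_{S_n}\rh_j(\de\mathcal Q)$ for all $j$, and it suffices to show $\rh_j(\de\qc)\cong_{S_n}\rh_j(\de\mathcal Q)$ for $j\ge t-(p-3)$.

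The heart of the argument is a rank estimate. Put $B=\qc\setminus\mathcal Q$, the set of $P\in\qc$ having an orbit of size $p^{a}$ with $a\ge2$. Since the orbits of a larger group are unions of those of a smaller one, $B$ is an order filter in $\qc$. Refining the computation in the proof of Lemma~\ref{ranklem}: if $P\in B$ has an orbit $\Omega$ with $|\Omega|=p^{a}$, $a\ge2$, then applying Lemma~\ref{ranklem} to the elementary abelian group $\prod_{\Omega_i\ne\Omega}\omega_i(P)$ acting on $[n]\setminus\Omega$ and using (\ref{rkeq}),
\[
\rk(P)\ \le\ a+\Big\lfloor\tfrac{n-p^{a}}{p}\Big\rfloor\ =\ a-p^{a-1}+\Big\lfloor\tfrac np\Big\rfloor\ \le\ 2-p+\Big\lfloor\tfrac np\Big\rfloor\ =\ t-(p-3),
\]
the last inequality because $a\mapsto a-p^{a-1}$ is decreasing. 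Consequently, for every $P\in B$ the complex $\de(\qc_{<P})$ -- the order complex of the poset of proper nontrivial subgroups of $P$ -- has dimension $\rk(P)-2\le t-p+1$.

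Finally I would delete the filter $B$ from $\qc$ one $S_n$-orbit at a time, in order of non-increasing rank (rank is conjugation-invariant): list the $S_n$-orbits in $B$ as $\mathcal O_1,\dots,\mathcal O_M$ with $\rk(\mathcal O_1)\ge\cdots\ge\rk(\mathcal O_M)$, and set $P_0=\qc$, $P_a=P_{a-1}\setminus\mathcal O_a$, so $P_M=\mathcal Q$ and each $P_a$ is $S_n$-stable. Fix $a$ and $A\in\mathcal O_a$: everything above $A$ in $\qc$ lies in $B$ with rank $>\rk(A)$, hence was removed at an earlier step, so $(P_{a-1})_{>A}=\emptyset$, while everything below $A$ has smaller rank and survives, so $(P_{a-1})_{<A}=\qc_{<A}$. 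Thus $\lk_{\de P_{a-1}}(A)=\de(\qc_{<A})$, of dimension at most $t-p+1$. Since $\mathcal O_a$ is an antichain, $\de P_{a-1}$ is obtained from $\de P_a=\mathrm{del}_{\de P_{a-1}}(\mathcal O_a)$ by gluing the contractible closed stars of the vertices $A\in\mathcal O_a$ along these links, so an $S_n$-equivariant excision and Mayer--Vietoris argument gives
\[
\rh_j(\de P_{a-1},\de P_a)\ \cong_{S_n}\ \bigoplus_{A\in\mathcal O_a}\rh_{j-1}\big(\de(\qc_{<A})\big),
\]
each summand induced from $N_{S_n}(A)$; this vanishes for $j-1>t-p+1$, i.e. for $j\ge t-(p-3)$. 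The long exact sequence of the pair $(\de P_{a-1},\de P_a)$ then gives $\rh_j(\de P_a)\cong_{S_n}\rh_j(\de P_{a-1})$ for $j\ge t-(p-3)$, and composing over $a=1,\dots,M$ yields $\rh_j(\de\mathcal Q)\cong_{S_n}\rh_j(\de\qc)$ in that range; with the previous paragraph this proves (2). The only real difficulty I anticipate is equivariant bookkeeping -- arranging the deletion so that every stage is $S_n$-stable (whence the orbit-by-orbit, rank-ordered scheme) and verifying that the Mayer--Vietoris identification above is an isomorphism of $\cc[S_n]$-modules -- and the codimension bound $p-3$ is forced with no slack by the rank estimate, which is sharp at $a=2$.
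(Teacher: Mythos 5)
Part (1) is fine, and your rank estimate for groups with an orbit of size $p^a$, $a\ge 2$, is correct and is essentially the same computation the paper does inside the proof of implication (\ref{imp}). However, your proof of (2) has a genuine gap at the very first step: the claimed equivalence ``$P$ has all orbits of size $1$ or $p$ $\iff$ $P$ is generated by its $p$-cycles'' is false. Take $p=3$, $n\ge 6$ and $P=\langle(1\,2\,3)(4\,5\,6)\rangle$: every orbit of $P$ has size $1$ or $3$, $P$ is elementary abelian, but $P$ contains no $3$-cycle at all, so $P$ is not generated by $p$-cycles and is not a product $\prod_i\langle c_i\rangle$ of disjoint $p$-cyclic groups. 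Consequently your map $\{\langle c_1\rangle,\dots,\langle c_k\rangle\}\mapsto\prod_i\langle c_i\rangle$ is \emph{not} surjective onto the orbit-condition poset $\mathcal Q$ you defined, so $\pp\pcc\not\cong\mathcal Q$; and if you instead take $\mathcal Q$ to be the genuinely $p$-cycle-generated groups (which is what $\pp\pcc$ actually maps onto), then $B=\qc\setminus\mathcal Q$ is no longer an order filter (the diagonal group above sits under $\langle(1\,2\,3),(4\,5\,6)\rangle\in\mathcal Q$), and moreover such ``diagonal'' groups with small orbits need not satisfy $\rk(P)\le t-(p-3)$ — e.g.\ for $p=5$, $n=25$ one can make a rank-$4$ diagonal subgroup of $(\mathbb Z/5)^5$ that is not a product of $p$-cyclic groups, and $4>t-(p-3)=2$. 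So the top-down Mayer--Vietoris filtration, as you set it up, neither terminates at $\pp\pcc$ nor stays inside the allowed codimension.

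The paper sidesteps exactly this issue by \emph{keeping} the diagonal groups: it works with the down-set $\ipn$ of all subgroups of maximal $p$-cycle-generated groups (which does contain those diagonal groups), observes that $\qc\setminus\ipn$ consists only of groups with a large orbit (so the chain complexes of $\de\qc$ and $\de\ipn$ literally coincide in the relevant degrees — a chain-level identification, not a filtration), and then dispatches the diagonal groups inside $\ipn$ with two applications of the equivariant cross-cut lemma (Lemma~\ref{maxint}), landing on the intersection poset $\lpn$ which it shows coincides with the cross-cut of $\pp\pcc$. Your equivariant-filtration idea is an attractive alternative to cross-cut and the rest of your bookkeeping (rank-ordered orbit-by-orbit deletion, excision, long exact sequence) is sound, but to make it work you would need an additional step, e.g.\ first establish an equivariant homotopy equivalence between the orbit-condition poset and its subposet of $p$-cycle-generated groups via the monotone closure map $P\mapsto\ov P$ (a Quillen-fiber-type argument), and only then run your filtration from $\qc$ down to the orbit-condition poset. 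As written, the argument does not go through.
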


\begin{proof}
Set
\[
\cm=\{P \in \qc:P\mbox{ is generated by } \lfloor \frac{n}{p} \rfloor \mbox{ pairwise disjoint $p$-cycles}\},
\]
and let
\[
\ipn=\{Q \in \qc:Q \leq P \mbox{ for some } P \in \cm \}.
\]
Then $\ipn$ is an $S_n$-invariant subposet of $\qc$, and by Lemma
\ref{ranklem} we have
\[
\dim\de\qc=\dim\de\ipn=t.
\]
The elements of any  maximal face of $\pcc$ together generate an element of $\cm$.  It follows that
\[
\dim\pcc=t
\]
and claim (1) follows.

Next we show that if  $j \ge t-p+3$ then 
\begin{equation} \label{haiiso} \rh_j(\de\qc) \cong_{S_n} \rh_j(\de\ipn).
\end{equation} 
We prove this by first establishing the implication 
\begin{equation}\label{imp} P \in \qc-\ipn \implies \rk(P) \le t-(p-3).\end{equation}  It suffices to prove the implication when $P$ is a maximal element of  $\qc$, in which case $P =\ov P$.        Let $\Omega_i$  and $Q$ be defined as in the proof of Lemma~\ref{ranklem}, with $|\Omega_1| \ge | \Omega_i|$ for all $i$.   It follows from Lemma~\ref{ranklem} that   the string of equalities and inequalities given in (\ref{rankq}) holds.   Let $a =\log(|\Omega_1|)$.  Then  $a$ is a positive integer and  it follows from (\ref{rankq}) that 
$\rk(P) \le t +1 +a-p^{a-1}$.  It is easy to see that $a-p^{a-1} \le 2-p$ for all $a \ge 2$. Hence
$\rk(P) \le t -(p-3)$, unless $a=1$, which we claim is impossible.  Indeed, if $a=1$ then all orbits have size $p$ or $1$, which means that $P$ is generated by pairwise disjoint $p$-cycles.  Since $P$ is maximal, the number of $p$-cycles is $\lfloor {n\over p}\rfloor$, which means that $P \in \mathcal M$, contradicting $P \notin \ipn$.

It follows from the implication (\ref{imp}) that every chain  of  $\mathcal A_p(S_n)$ with  at least $t-p+4$ elements is a chain of $\ipn$ since its top element  has rank at least $t-p+4$.   
 Hence (\ref{haiiso}) holds.

Now let
\[
\lpn=\{Q \in \ipn:Q=\bigcap_{P \in \cn}P \mbox{ for some } \cn
\subseteq \cm \}.
\]
For $Q \in \lpn$, let $\cm_Q$ be the set of elements of $\cm$
which contain $Q$, so
\[
Q=\bigcap_{P \in \cm_Q}P.
\]
For (nonidentity) $q \in Q$, write
\[
q=\prod_{i=1}^{m}q_i,
\]
where the $q_i$ are pairwise disjoint $p$-cycles.  
Let $P \in \cm_Q$ be generated by pairwise disjoint
$p$-cycles $p_1,\ldots,p_t$.  The elements of $P$ have the  form
\[
\prod_{j=1}^{t}p_j^{a_j}, (0 \leq a_j<p),
\]
since the $p_j$ commute.
Since $q \in P$, we see that each
$q_i$ is a power of some $p_j$ and it follows that $q_i \in P$ for
each $i$.  Since $P$ is an arbitrary element of $\cm_Q$, we see
that each $q_i$ lies in $Q$.  Therefore, $Q$ is generated by
$p$-cycles.  We may now apply Lemma \ref{maxint} twice to get
\[
\rh_i(\de\ipn) \cong_{S_n} \rh_i(\de\lpn) \cong_{S_n}
\rh_i(\de\pp\pcc),
\] for all $i$,
and claim (2) now follows from the isomorphism (\ref{haiiso}) and Lemma \ref{subdiv}.
\end{proof}

\section{Relating the top homology of the $p$-cycle complex to
that of the hypergraph matching complex} \label{hmpc}

Let $\de$ be a simplicial complex on vertex set
$\{x_1,\ldots,x_n\}$ and let $\mm=(m_1,\ldots,m_n)$ be an
$n$-tuple of positive integers.  As defined in \cite[Section
6]{BjWaWe}, the $\mm$-inflation $\de_\mm$ of $\de$ is the complex
on vertex set $\{(x_i,j_i):i \in [n],j \in [m_i]\}$, such that
$\{(x_{i_1},j_{i_1}),\ldots,(x_{i_k},j_{i_k})\}$ is a
$(k-1)$-simplex in $\de_\mm$ if and only if
\begin{itemize}
\item $\{x_{i_1},\ldots,x_{i_k}\}$ is a $(k-1)$-simplex in $\de$,
and
\item $j_{i_l} \in [m_{i_l}]$ for all $l \in [k]$.
\end{itemize}
Roughly, $\de_\mm$ is obtained from $\de$ by taking $m_i$ copies
of vertex $x_i$ and then allowing every possible ``version" of a
face of $\de$ to be a face of $\de_\mm$.

The relevance of inflations to the matter at hand is made clear by
the following easy lemma (which is discussed in \cite{Sh}).

\begin{lemma}
Let $p$ be any prime and let $n$ be any positive integer.  Let
$\mm$ be the ${{n} \choose {p}}$-tuple each of whose entries is
$(p-2)!$. Then
\[
\pcc \cong \hmc_\mm.
\]
\label{inflem}
\end{lemma}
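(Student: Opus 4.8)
The plan is to exhibit an explicit isomorphism of simplicial complexes, as the statement is purely combinatorial once one unwinds the definitions. First I would fix notation for the vertices on both sides. A vertex of $\pcc$ is a subgroup $\langle \sigma \rangle \le S_n$ generated by a $p$-cycle $\sigma$; such a subgroup is determined by its support $X \in \binom{[n]}{p}$ together with a choice of one of the generators up to the equivalence $\sigma \sim \sigma^a$ ($1 \le a \le p-1$). The number of $p$-cycles on a fixed $p$-set $X$ is $(p-1)!$, so after quotienting by this equivalence there are exactly $(p-1)!/(p-1) = (p-2)!$ subgroups of order $p$ with support $X$. On the other side, a vertex of $\hmc_\mm$ with $\mm = ((p-2)!, \ldots, (p-2)!)$ is a pair $(X, j)$ with $X \in \binom{[n]}{p}$ (a vertex of $\hmc$) and $j \in [(p-2)!]$. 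So for each fixed $X$ the two complexes have the same number of vertices, namely $(p-2)!$, and I would choose, once and for all, any bijection $\beta_X$ from $[(p-2)!]$ to the set of order-$p$ subgroups of $S_n$ with support $X$. Assembling these gives a bijection $\beta$ on vertex sets sending $(X,j) \mapsto \beta_X(j)$.

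Next I would check that $\beta$ is a simplicial isomorphism, i.e.\ that it carries faces to faces in both directions. A collection of vertices $\{(X_1,j_1), \ldots, (X_k,j_k)\}$ is a face of $\hmc_\mm$ if and only if $\{X_1, \ldots, X_k\}$ is a face of $\hmc$, which by definition means the $X_i$ are pairwise disjoint. On the $\pcc$ side, $\{\beta_{X_1}(j_1), \ldots, \beta_{X_k}(j_k)\}$ is a face if and only if the subgroups together generate an abelian group. The key observation is that two cyclic subgroups generated by $p$-cycles generate an abelian group precisely when their supports are disjoint: if the supports are disjoint the generators commute, while if the supports of two distinct such subgroups meet nontrivially one can exhibit two of the generating $p$-cycles that do not commute (their product is not abelian — e.g.\ if $X_i \cap X_{i'} \ne \emptyset$ but $X_i \ne X_{i'}$ the two cyclic groups are distinct and cannot commute, since commuting $p$-cycles either are equal powers of a common element or have disjoint support, and if $X_i = X_{i'}$ but $\beta_{X_i}(j_i) \ne \beta_{X_{i'}}(j_{i'})$ the two subgroups are distinct of order $p$ inside $S_X$, hence generate a nonabelian group as $S_X$ has no subgroup of order $p^2$ for $p \le |X| = p$). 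Since "abelian" for such a collection is equivalent to "pairwise disjoint supports", and $\beta_X$ is support-preserving, the face condition on the $\pcc$ side matches the face condition on the $\hmc_\mm$ side exactly. Hence $\beta$ and $\beta^{-1}$ both preserve faces, so $\beta$ is the desired isomorphism.

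The only genuinely delicate point, and the one I would present most carefully, is the equivalence "collection of $p$-cycle-generated subgroups abelian $\iff$ supports pairwise disjoint"; everything else is bookkeeping on cardinalities. One direction is trivial (disjoint supports give commuting generators), so the content is showing that an abelian subgroup of $S_n$ containing two distinct subgroups of order $p$ generated by $p$-cycles forces those to have disjoint support — equivalently, a $p$-cycle cannot commute with another $p$-cycle unless they generate the same group or have disjoint support. This is a standard fact about centralizers of $p$-cycles in $S_n$ (the centralizer of a $p$-cycle $\sigma$ with support $X$ is $\langle \sigma \rangle \times S_{[n]\setminus X}$), and I would invoke it directly. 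With that in hand the lemma follows immediately, so I expect the whole proof to be short.
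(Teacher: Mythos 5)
Your proof is correct and follows essentially the same route as the paper's proof, which is simply a terse statement that each $p$-set supports exactly $(p-2)!$ cyclic subgroups of order $p$ and that faces of $\pcc$ are exactly collections of such subgroups with pairwise disjoint supports. You have spelled out the implicit step (abelian generation $\iff$ pairwise disjoint supports, via the centralizer of a $p$-cycle) that the paper leaves to the reader; the one blemish is the garbled phrase ``for $p \le |X| = p$,'' which should read something like ``since $|S_X| = p!$ is not divisible by $p^2$.''
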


\begin{proof}
The vertices of $\hmc$ are the $p$-sets from $[n]$.  For each such
$p$-set $X$, there are $(p-2)!$ cyclic subgroups of order $p$ in
$S_n$ with support $X$.  A $(k-1)$-simplex of $\hmc$ is a
collection of $k$ disjoint $p$-sets, while a $(k-1)$-simplex of
$\pcc$ is a collection of $k$ cyclic subgroups of $S_n$ with
disjoint supports, each subgroup generated by a $p$-cycle.
\end{proof}

For a complex $\de$ and inflation $\de_\mm$, the homotopy type of
$\de_\mm$ is determined by the homotopy types of the links of the
faces of $\de$ (see \cite[Theorem 6.2]{BjWaWe}).  Thus the
homology of $\de_\mm$ is determined by the homology of links in
$\de$.  There is an equivariant version of this homology result,
which we will state below after making the appropriate
definitions.

When a group $G$ acts (simplicially) on a complex $\de$, $G_F$ will
denote the stabilizer in $G$ of a face $F$ of $\de$ and $\de/G$
will denote an arbitrary set of representatives for the orbits of
$G$ on the set of faces of $\de$ (including the empty face).  If
$\de_\mm$ is an inflation of $\de$ ($\mm=(m_1,\ldots,m_n)$) and
$F=\{x_{i_1},\ldots,x_{i_k}\}$ is a face of $\de$ then $\mm(F)$
will denote the $k$-tuple $(m_{i_1},\ldots,m_{i_k})$.  The {\it
deflating map} $\delta:\de_\mm \rightarrow \de$ is the simplicial
map induced by the function on vertex sets which sends $(x_i,j_i)$
to $x_i$.  For a face $F$ of $\de$, $\dot{F}$ will denote the set
of all subsets of $F$ (a subcomplex of $\de$). If the actions of
$G$ on the complexes $\de_\mm$ and $\de$ are intertwined by
the deflating map then $\dot{F}_{\mm(F)}$ is a $G_F$-invariant
subcomplex of $\de_\mm$. In any case, $\lk_\de(F)$ is a
$G_F$-invariant subcomplex of $\de$. Thus if $\delta$ intertwines
the given actions then for any integers $i,j$, the tensor product
\[
\rh_i(\dot{F}_{\mm(F)}) \otimes \rh_j(\lk_\de(F))
\]
is a $\cc[G_F]$-module.

\begin{lemma}[{\cite[Corollary 9.5]{BjWaWe}}]
Let $\de$ be a simplicial complex on vertex set
$\{x_1,\ldots,x_n\}$ and let $\mm$ be an $n$-tuple of positive
integers.  Let $G$ be a group which acts simplicially on $\de$ and
$\de_\mm$ so that these actions of $G$ are intertwined by the
deflating map $\delta:\de_\mm \rightarrow \de$.  Then for each
integer $r$, we have
\[
\rh_r(\de_\mm) \cong_G \bigoplus_{F \in
\de/G}\left(\rh_{|F|-1}(\dot{F}_{\mm(F)}) \otimes
\rh_{r-|F|}(\lk_\de F)\right)\uparrow_{G_F}^{G}.
\]
\label{equinfl}
\end{lemma}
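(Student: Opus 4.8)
The plan is to reduce to the non-equivariant decomposition underlying \cite[Theorem 6.2]{BjWaWe}, make the identifications involved natural in $\de$ (hence $G$-equivariant once $G$ acts), and then reorganize the resulting direct sum — which $G$ permutes along its action on the faces of $\de$ — into a sum of modules induced from face stabilizers. Concretely I would proceed in three steps: (i) exhibit $\de_\mm$ as a homotopy colimit, over the poset of all faces of $\de$, of the $G$-diagram $F\mapsto\dot F_{\mm(F)}$; (ii) split this homotopy colimit $G$-equivariantly into a wedge of joins; (iii) pass to reduced homology and collect orbits.

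For step (i): each $\dot F_{\mm(F)}$ is the $\mm(F)$-inflation of the full simplex on $F$, i.e.\ the iterated join $\ast_{x\in F}(\,m_x\text{ points})$, a $G_F$-invariant subcomplex of $\de_\mm$, and $\de_\mm=\bigcup_F\dot F_{\mm(F)}$ over all faces $F$ of $\de$ (including $\emptyset$, with $\dot\emptyset_{\mm(\emptyset)}=\{\emptyset\}$). The deflating map $\delta$ shows that for any simplex $\sigma$ of $\de_\mm$ the set of faces $F$ with $\sigma\in\dot F_{\mm(F)}$ is exactly $\{F : F\supseteq\delta(\sigma)\}$, which has a minimum; this ``closure'' property makes $F\mapsto\dot F_{\mm(F)}$ (over $\pp\de\cup\{\emptyset\}$, ordered by inclusion) a cofibrant $G$-diagram whose homotopy colimit maps to its colimit $\de_\mm$ by a $G$-homotopy equivalence. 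This is the equivariant refinement of the input to \cite[Theorem 6.2]{BjWaWe}, in the spirit of the equivariant fiber theorem \cite[Corollary 9.3]{BjWaWe} used in the proof of Lemma \ref{maxint}; the needed equivariant enhancements of the relevant homotopy-theoretic statements are those of \cite[\S 9]{BjWaWe}.

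For steps (ii)--(iii): the equivariant Wedge Lemma of \cite[\S 9]{BjWaWe} produces a $G$-equivariant homotopy equivalence of $\de_\mm$ with a wedge, over all faces $F$ of $\de$, of the joins $\|(\pp\de\cup\{\emptyset\})_{>F}\|\ast\dot F_{\mm(F)}$, on which $G$ permutes the summands according to its action on faces, the $F$-summand being $G_F$-invariant. Since $(\pp\de\cup\{\emptyset\})_{>F}\cong\pp(\lk_\de F)$ as a $G_F$-poset, Lemma \ref{subdiv} applied to $\lk_\de F$ with the group $G_F$ gives $\rh_*\big(\|(\pp\de\cup\{\emptyset\})_{>F}\|\big)\cong_{G_F}\rh_*(\lk_\de F)$; for $F=\emptyset$ this is just $\rh_*(\de\pp\de)\cong_G\rh_*(\de)$, and the corresponding summand is $\rh_r(\de)$. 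Taking reduced homology, using the Künneth formula for joins $\rh_r(A\ast B)=\bigoplus_{i+j=r-1}\rh_i(A)\otimes\rh_j(B)$, and noting that the iterated join of zero-dimensional complexes $\dot F_{\mm(F)}$ has reduced homology concentrated in degree $|F|-1$, we get, as $\cc[G]$-modules,
\[
\rh_r(\de_\mm)\ \cong_G\ \bigoplus_{F}\Big(\rh_{|F|-1}(\dot F_{\mm(F)})\otimes\rh_{r-|F|}(\lk_\de F)\Big),
\]
summed over all faces $F$ of $\de$, with $G$ permuting the summands along its action on faces and the $F$-summand a $\cc[G_F]$-module (as observed just before the statement). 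The standard fact that a $\cc[G]$-module $\bigoplus_{i\in I}V_i$ on which $G$ permutes $I$, carrying $V_i$ isomorphically onto $V_{gi}$, is isomorphic to $\bigoplus_{i\in I/G}V_i\!\uparrow_{G_i}^{G}$, applied to the $G$-set of faces of $\de$, converts this into the asserted formula.

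The main obstacle is steps (i)--(ii): transporting the homotopy-colimit and Wedge-Lemma machinery of \cite{BjWaWe} into the $G$-equivariant setting, i.e.\ checking that $F\mapsto\dot F_{\mm(F)}$ is a cofibrant $G$-diagram and that the resulting homotopy equivalences can be chosen $G$-equivariant; once that is in place, step (iii) and the orbit bookkeeping are routine. (One can instead argue more elementarily by Mayer--Vietoris and induction on $\sum_i(m_i-1)$, splitting off one vertex at a time and using the simplicial retraction $\de_\mm\to\de_{\mm'}$ that merges the last copy of that vertex; but there the passage from stabilizer-equivariant statements back to a $G$-equivariant one requires the same orbit bookkeeping, so it is no easier.)
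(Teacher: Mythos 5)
The paper gives no proof of this lemma: it is quoted directly as \cite[Corollary 9.5]{BjWaWe}, so there is no in-text argument to compare yours against. Your sketch is a sound reconstruction of how that corollary is derived in \cite{BjWaWe}: apply the equivariant poset fiber/Wedge Lemma of their \S 9 to the deflating poset map $\delta:\pp(\de_\mm)\to\pp\de$ (whose fiber over $F$ is $\pp(\dot F_{\mm(F)})$), identify $\pp\de_{>F}$ with $\pp(\lk_\de F)$ as a $G_F$-poset, then use the K\"unneth formula for joins --- noting, as you do, that $\rh_*(\dot F_{\mm(F)})$ is concentrated in degree $|F|-1$ because $\dot F_{\mm(F)}$ is a join of $|F|$ nonempty discrete sets, which is what collapses the K\"unneth sum to a single term and produces the $|F|$ shift --- and finally collect the $G$-orbits on faces into induced modules by the standard fact you cite. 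Your handling of the $F=\emptyset$ term (reduced $(-1)$-homology of the irrelevant complex tensored with $\rh_r(\de)$) is also correct. The one point worth flagging is that the ``main obstacle'' you identify --- upgrading the homotopy-colimit and Wedge Lemma machinery to be $G$-equivariant --- is precisely the content of \cite[\S 9]{BjWaWe}, and Corollary 9.5 there is already stated as the equivariant version of Theorem 6.2; so what you have written is less an independent proof than a guided re-derivation of Corollary 9.5 from the neighboring results of that section. That is a perfectly reasonable way to explain the citation, but it does not replace the reliance on \cite{BjWaWe}, which is where the equivariant enhancements are actually established.
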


For a prime $p$ and an integer $n$, the deflating map $\delta:\pcc
\rightarrow \hmc$ intertwines the actions of $S_n$ on the two
complexes, so Lemma \ref{equinfl} applies.  The following facts
are straightforward to prove.
\begin{itemize}
\item For each integer $k$, the group $S_n$ acts transitively on
the set of $k$-simplices of $\hmc$.  Therefore, $\hmc/S_n$
consists of one matching with $k$ hyperedges for $0 \leq k \leq
\lfloor \frac{n}{p} \rfloor$.
\item Let $F \in \hmc/S_n$ with $|F|=k$, so $F$ contains $k$
hyperedges $E_1,\ldots,E_k$.  Set $\Omega^+=\bigcup_{i=1}^{k}E_i$
and $\Omega^-=[n] \setminus \Omega^+$.
\begin{itemize}
\item The stabilizer $G_F$ of $F$ in $S_n$ is $H \times S_{\Omega^-}$,
where $H \leq S_{\Omega^+}$ is isomorphic to the wreath product
$S_k[S_p]$.  The action of the kernel $K \cong (S_p)^k$ of $H$ is the
componentwise action, that is, the $i^{th}$ component of $K$
permutes the $p$ vertices of $E_i$, while a standard complement $C \cong S_k$ in
$H$ permutes the $k$ hyperedges $E_1,\ldots,E_k$, as described in Section \ref{defsec}.
\item The link $\lk_{\hmc}(F)$ is isomorphic to ${\mathsf
M}_p(n-kp)$ (it is the $p$-uniform hypergraph matching complex on
vertex set $\Omega^-$).
\item The group $H$ acts trivially on $\Omega^-$ and therefore
acts trivially on $\lk_{\hmc}(F)$.  The group $S_{\Omega^-}$ acts
trivially on $\Omega^+$ and therefore acts trivially on
$\dot{F}_{\mm(F)}$.  Thus for any integer $r$, the action of $G_F$
on the module $\rh_{|F|-1}(\dot{F}_{\mm(F)}) \otimes
\rh_{r-|F|}(\lk_{\hmc}(F))$ is the usual tensor product action -
that is, for $h \in H$, $\sigma \in S_{\Omega^-}$, $v \in
\rh_{|F|-1}(\dot{F}_{\mm(F)})$ and $w \in
\rh_{r-|F|}(\lk_{\hmc}(F))$, we have $(v \otimes w)(h,\sigma)=vh
\otimes w\sigma$.

\end{itemize}

\end{itemize}

For $F \in \hmc/S_n$ and $r$ any integer, let $V_r(F)$ be the
$\cc[G_F]$-module $\rh_{|F|-1}(\dot{F}_{\mm(F)}) \otimes
\rh_{r-|F|}(\lk_{\hmc(F)})$, with action as described above.  We
are interested in the induced module $V_r(F)
\uparrow_{G_F}^{S_n}$, which is a direct summand in the
$\cc[S_n]$-module $\rh_r(\pcc)$ by Lemma \ref{equinfl}.  Basic
facts from the theory of induced modules give
\begin{eqnarray*}
V_r(F) \uparrow_{G_F}^{S_n} & \cong_{S_n} &
(V_r(F)\uparrow_{G_F}^{S_{\Omega^+} \times
S_{\Omega^-}})\uparrow_{S_{\Omega^+} \times S_{\Omega^-}}^{S_n} \\
& \cong_{S_n} &
(\rh_{|F|-1}(\dot{F}_{\mm(F)})\uparrow_{H}^{S_{\Omega^+}} \otimes
\rh_{r-|F|}(\lk_{\hmc}(F)))\uparrow_{S_{\Omega^+} \times
S_{\Omega^-}}^{S_n}.
\end{eqnarray*}

As noted above, if $|F|=k$ then the $\cc[S_{\Omega^-}]$-module
$\rh_{r-|F|}(\lk_{\hmc}(F))$ is equivalent to the
$\cc[S_{n-kp}]$-module $\rh_{r-|F|}({\mathsf M}_p(n-kp))$.  In
addition, if $r=\dim(\pcc)$ then $r-|F|=\dim({\mathsf
M}_p(n-kp))$. This gives the connection between the top homology
modules of $p$-cycle complexes and those of $p$-uniform hypergraph
matching complexes, which will be made more precise once we
understand the $\cc[S_{\Omega^+}]$-module
$\rh_{|F|-1}(\dot{F}_{\mm(F)})\uparrow_{H}^{S_{\Omega^+}}$.

Let us first understand the complex $\dot{F}_{\mm(F)}$.  If the
matching $F$ contains hyperedges $E_1,\ldots,E_k$ then (for $0
\leq l \leq k-1$) an $l$-simplex in $\dot{F}_{\mm(F)}$ is obtained
by selecting $l+1$ of the $k$ hyperedges of $F$, and for each of
the selected hyperedges $E_i$, selecting one of the $(p-2)!$
subgroups of order $p$ from $S_{\Omega^+}$ whose support is $E_i$.
It follows that $$\dot{F}_{\mm(F)}= \de_1*\ldots*\de_k$$ where $*$ denotes
 the join of complexes and 
 each $\de_i$ is a set of $(p-2)!$
points (equivalently, a wedge of $(p-2)!-1$ spheres of dimension
$0$). These points are the nontrivial $p$-subgroups of $S_{E_i}$. 

The action of $H \cong S_k[S_p]$ on $\dot{F}_{\mm(F)}$ is quite
transparent when the complex is represented as the join of the
subcomplexes $\de_i$.  Namely, a standard complement $C \cong S_k$ acts by
permuting the $k$ subcomplexes $\de_i$, and for any $i \in [k]$,
the $i^{th}$ component of the kernel $K \cong (S_p)^k$ acts on the
$(p-2)!$ points of $\de_i$ as it acts by conjugation on its
$(p-2)!$ Sylow $p$-subgroups.

Let $P$ be the poset of $p$-subgroups of $S_p$ including the trivial subgroup;
so $P$
has one minimum element $\hat 0_P$ and $(p-2)!$ maximal elements. The symmetric group $S_p$
acts on elements of $P$ by conjugation.  
  Clearly  $P$ is isomorphic to  the poset  $P_i$ of faces of $\de_i$ for each $i$, and
  the action of $S_p$ on $P$ is equivalent to the action of   $S_{E_i}$  on $P_i$.
  The action of $S_p$ on $P$ induces an obvious action of the wreath product  $S_k[S_p]$  on the $k$-fold direct product $P^{\times k}$.
   It is
straightforward to show that if $\hz$ is the (unique) minimum
element of  $P^{\times k}$, then 
\[
\pp(\de_1 \ast \ldots \ast \de_k) \cong P^{\times k} \setminus \{ \hz \}.
\]
and that the action of $H$ on $\pp(\de_1 \ast \ldots \ast \de_k)$ is equivalent
to the  action of   $S_k[S_p]$ on $P^{\times k} \setminus \{ \hz \}$.
By \cite[Proposition 2.7]{Su} the representation  of $S_k[S_p]$ on $\rh_{k-1}(\de P^{\times k} \setminus \{ \hz \})$ is ${\rm sgn}_k[\nu]$, where ${\rm sgn}_k$ is the alternating (or sign) representation 
of $S_k$ and $\nu$ is the representation of $S_p$ on $\rh_0(\de(P\setminus\{\hat 0_P\}))$
induced by the action of $S_p$ on $P$ described above.  Hence the  representation of $H$ on 
$\rh_{k-1}(\dot{F}_{\mm(F)})$ is equivalent to the representation of $S_k[S_p]$ on ${\rm sgn}_k[\nu]$.
Thus by viewing ${\rm sgn}_k[\nu]$ as a $\cc[H]$-module via the isomorphism between $H$ and $S_k[S_p]$, we have
 \begin{equation}\label{wreath} \rh_{k-1}(\dot{F}_{\mm(F)})\uparrow_H^{S_{\Omega^+}} \cong_{S_{\Omega^+}} {\rm sgn}_k[\nu]\uparrow_H^{S_{\Omega^+}}.\end{equation}

It is well known and straightforward to show that if $\de$ is a
complex consisting of $m$ points, $K \leq S_m$ permutes these
points transitively, $W$ is the permutation module for $\cc[K]$
associated with this action, and $T$ is the (unique) trivial
$\cc[K]$-submodule of $W$ then
\begin{equation} \label{permh}
\rh_0(\de) \cong_K W/T.
\end{equation}

\begin{theorem}
Let $N_p$ be the normalizer of a Sylow $p$-subgroup of $S_p$.  Let
$\f_p$ be the Frobenius characteristic of the induced character
$1_{N_p}^{S_p}$.  Then the $\cc[S_{\Omega^+}]$-module
$\rh_{|F|-1}(\dot{F}_{\mm(F)})\uparrow_{H}^{S_{\Omega^+}}$ is
isomorphic to the $\cc[S_{kp}]$-module whose character has
Frobenius characteristic
\[
\e_k[\f_p-\h_p].
\]
\label{frch1}
\end{theorem}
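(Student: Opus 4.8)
The plan is to combine the isomorphisms already established in this section with the plethysm formula for induced wreath-product representations recorded at the end of Section~\ref{defsec}. By (\ref{wreath}) it suffices to compute the Frobenius characteristic of ${\rm sgn}_k[\nu]\uparrow_H^{S_{\Omega^+}}$, where $H\cong S_k[S_p]$ and $\nu$ is the representation of $S_p$ on $\rh_0(\de(P\setminus\{\hat 0_P\}))$; so the real task is to identify the $\cc[S_p]$-module $\nu$ and then apply the induction formula.

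First I would pin down $\nu$. The poset $P\setminus\{\hat 0_P\}$ consists of the nontrivial $p$-subgroups of $S_p$; since a Sylow $p$-subgroup of $S_p$ is cyclic of order $p$, these are precisely the $(p-2)!$ subgroups generated by a $p$-cycle, and they form an antichain, so $\de(P\setminus\{\hat 0_P\})$ is a set of $(p-2)!$ points on which $S_p$ acts transitively by conjugation, with the stabilizer of a point being its normalizer $N_p$. Hence the permutation $\cc[S_p]$-module $W$ afforded by this action is $1_{N_p}^{S_p}$, whose Frobenius characteristic is $\f_p$ by definition. By (\ref{permh}) (with $K=S_p$) we get $\nu\cong_{S_p}W/T$, where $T$ is the unique trivial submodule of $W$; thus $\nu$ is a genuine $\cc[S_p]$-module and, since $\ch T=\h_p$, we have $\ch\nu=\f_p-\h_p$.

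Next I would invoke the formula from Section~\ref{defsec}: if $\phi\colon S_k\to GL(V)$ and $\rho\colon S_p\to GL(W')$ have Frobenius characteristics $\f$ and $\g$, then the induced representation $\phi[\rho]\uparrow_H^{S_{kp}}$ has Frobenius characteristic $\f[\g]$, $H$ being the stabilizer in $S_{kp}$ of a partition of $[kp]$ into $k$ blocks of size $p$. The group $H$ here is exactly the copy of $S_k[S_p]$ inside $S_{\Omega^+}$ identified earlier (standard complement $C$ permuting the subcomplexes $\de_i$, kernel $K$ acting coordinatewise), so taking $\phi={\rm sgn}_k$ (with $\f=\e_k$) and $\rho=\nu$ (with $\g=\f_p-\h_p$) yields $\ch\big({\rm sgn}_k[\nu]\uparrow_H^{S_{\Omega^+}}\big)=\e_k[\f_p-\h_p]$, which together with (\ref{wreath}) gives the assertion.

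I do not expect any real obstacle: the theorem is an organized bookkeeping of (\ref{wreath}), (\ref{permh}) and the wreath-product plethysm identity. The only points deserving a line of care are that the equivalence of the $H$-action on $\rh_{k-1}(\dot{F}_{\mm(F)})$ with the wreath-product module ${\rm sgn}_k[\nu]$ has already been established via \cite[Proposition 2.7]{Su} in the discussion preceding (\ref{wreath}), and that although $\f_p-\h_p$ is written as a difference it is the characteristic of the honest module $W/T$, so $\e_k[\f_p-\h_p]$ is a genuine (Schur-positive) symmetric function and the plethysm introduces no sign ambiguity; the degenerate cases $k\le 1$ are immediate.
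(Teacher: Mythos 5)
Your proposal is correct and follows essentially the same route as the paper's own (very terse) proof: combine (\ref{wreath}) with (\ref{permh}), identify $\nu$ as the quotient of the permutation module $1\uparrow_{N_p}^{S_p}$ by its trivial submodule so that $\ch\nu=\f_p-\h_p$, and then apply the wreath-product plethysm identity from Section~\ref{defsec}. You have simply written out the identifications that the paper leaves implicit.
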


\begin{proof} This follows from  the isomorphisms (\ref{wreath}) and (\ref{permh}), 
and facts about Frobenius characteristic reviewed in Section \ref{defsec}, after noting that the permutation character associated with the action of a
group $X$ on the conjugates of a subgroup $Y$ by conjugation is
$1\uparrow_{N_X(Y)}^{X}$, where $N_X(Y)$ is the normalizer of $Y$ in $X$.
\end{proof}

Now we examine the symmetric function $\f_p$ of Theorem
\ref{frch1}.  By \cite[(7.119)]{St}, we know that $\f_p$ is the
cycle index $Z_{N_p}$, that is,
\[
\f_p=\sum_{g \in N_p}\p_{\rho(g)},  
\]
where $\rho(g)$ is the partition of $p$ determined by the cycle
shape of $g$.  It is well known and straightforward to show that
$N_p=CP$, where $P$ is cyclic of order $p$ (generated by a
$p$-cycle) and $C$ is cyclic of order $p-1$ (generated by a
$(p-1)$-cycle).  Moreover, each nonidentity element of $N_p$ is
contained in either $P$ or exactly one of the $p$ conjugates of
$C$.  For each divisor $d$ of $p-1$, the group $C$ contains
exactly $\phi(d)$ elements with one fixed point and
$\frac{p-1}{d}$ $d$-cycles, where $\phi$ is Euler's totient
function.  It now follows that
\begin{equation} \label{fp}
\f_p=\p_1^p+(p-1)\p_p+p\p_1\sum_{d|p-1}\phi(d)\p_d^{(p-1)/d}.
\end{equation}

We can also express $\f_p$ in terms of Schur functions, that is,
we can find a formula for the multiplicity of each irreducible
character of $S_p$ in $1\uparrow_{N_p}^{S_p}$.  First, we
introduce some notation used in \cite[Exercise 7.88]{St}.  Namely,
for an $n$-cycle $w$ in $S_n$ and an integer $m$, we denote by
$\psi_{m,n}$ the character of $S_n$ obtained by inducing from
$\langle w \rangle$ the character which maps $w$ to $e^{2\pi
im/n}$.

\begin{lemma}
For any prime $p$, we have
\[
1\uparrow_{N_p}^{S_p}=\psi_{0,p-1}\uparrow_{S_{p-1}}^{S_p}-\psi_{1,p}.
\]
\label{chardiff}
\end{lemma}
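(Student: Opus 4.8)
The plan is to compute both characters on each conjugacy class of $S_p$ and match them, using the structure $N_p = CP$ already recorded in the excerpt, where $P$ is cyclic of order $p$ and $C$ is cyclic of order $p-1$. First I would observe that the identity (\ref{fp}) gives $\f_p = \ch(1\uparrow_{N_p}^{S_p})$ explicitly as a power-sum expansion, so it suffices to show that the right-hand side $\psi_{0,p-1}\uparrow_{S_{p-1}}^{S_p} - \psi_{1,p}$ has the same Frobenius characteristic. The character $\psi_{0,p-1}$ is the trivial character of $\langle w\rangle$ for a $(p-1)$-cycle $w$; since $\langle w \rangle$ is a Sylow-$p$-complement-free cyclic subgroup of $S_{p-1}$ sitting inside $S_p$ as the stabilizer of a point together with a $(p-1)$-cycle, one recognizes $\psi_{0,p-1}\uparrow_{S_{p-1}}^{S_p}$ as $1\uparrow_{\langle w\rangle}^{S_p}$ where now $\langle w\rangle = C$ is exactly the cyclic group of order $p-1$ generated by a $(p-1)$-cycle appearing in $N_p = CP$. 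So the first key step is the identification
\[
\psi_{0,p-1}\uparrow_{S_{p-1}}^{S_p} = 1\uparrow_{C}^{S_p},
\]
which follows from transitivity of induction, $1\uparrow_{\langle w\rangle}^{S_{p-1}} = \psi_{0,p-1}$, together with the fact that $C$ and $\langle w \rangle$ are conjugate in $S_p$ (both are generated by a $(p-1)$-cycle). Likewise $\psi_{1,p} = 1\uparrow_{P}^{S_p}$ tensored with a linear character? — no: $\psi_{1,p}$ is induced from the faithful character sending a $p$-cycle to a primitive $p$-th root of unity, so it is genuinely not a permutation character, and its Frobenius characteristic is $\sum_{j=1}^{p-1}$-free; I would instead use the well-known fact (Stanley, Exercise 7.88) that $\sum_{m=0}^{p-1}\psi_{m,p} = 1\uparrow_P^{S_p}$ and that $\psi_{0,p} = 1\uparrow_P^{S_p}\ominus(\text{the rest})$, but more directly: since $P$ has order $p$ prime, all nontrivial characters of $P$ are Galois-conjugate, hence $\psi_{m,p}$ is independent of $m \in \{1,\dots,p-1\}$, and $1\uparrow_P^{S_p} = \psi_{0,p} + \psi_{1,p}$ with $\psi_{0,p}$ the trivial character; this is the second key step.

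With these two identifications in hand, the computation reduces to the character identity
\[
1\uparrow_{N_p}^{S_p} = 1\uparrow_{C}^{S_p} - \psi_{1,p},
\]
which I would prove by inclusion–exclusion on $N_p = C \cdot P$ with $C \cap P = 1$. The counting fact stated in the excerpt — each nonidentity element of $N_p$ lies in $P$ or in exactly one of the $p$ conjugates of $C$ — is exactly what is needed: on a class $K$ of $S_p$, the value $(1\uparrow_{N_p}^{S_p})(K) = \frac{|S_p|}{|N_p|}\cdot\frac{|K\cap N_p|}{|K|}\cdot|K|\cdot\frac{1}{\ldots}$ — more cleanly, I would use $\ch(1\uparrow_{N_p}^{S_p}) = \frac{1}{|N_p|}\sum_{g\in N_p}\p_{\rho(g)}$ (the cycle-index formula), split the sum over $N_p$ as: the identity, the $p-1$ nonidentity elements of $P$, and the nonidentity elements of the $p$ conjugates of $C$ (each conjugate contributing the same since $\p_{\rho(g)}$ is a class function). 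Since the $p$ conjugates of $C$ pairwise intersect only in $\{1\}$ and their union has $p(p-2)+1$ elements, $\frac{1}{|N_p|}\sum_{g\in N_p}\p_{\rho(g)}$ rearranges into $\frac{1}{|C|}\sum_{g\in C}\p_{\rho(g)} + \frac{1}{p}\sum_{g\in P}\p_{\rho(g)} - \frac{1}{p-1}\p_1^p - \frac{1}{p}\p_1^p + (\text{correction}) = \ch(1\uparrow_C^{S_p}) + \frac{1}{p}\bigl(\p_1^p + (p-1)\p_p\bigr) - \frac{1}{p}\p_1^p$. Comparing with $\ch(\psi_{1,p}) = \frac{1}{p}\sum_{j=1}^{p-1}\omega^{j}\cdot(\text{stuff})$, and using that $\ch(1\uparrow_P^{S_p}) = \ch(\psi_{0,p}) + \ch(\psi_{1,p}) = \h_p + \ch(\psi_{1,p})$ while $\frac{1}{p}\sum_{g\in P}\p_{\rho(g)} = \frac{1}{p}(\p_1^p + (p-1)\p_p) = \ch(1\uparrow_P^{S_p})$, one gets $\frac{1}{p}\sum_{g\in P}\p_{\rho(g)} - \h_p = \ch(\psi_{1,p})$, hence $1\uparrow_{N_p}^{S_p} = 1\uparrow_C^{S_p} - \psi_{1,p} + \h_p$? — the bookkeeping of the $\h_p = \ch(\psi_{0,p})$ term is exactly where I expect to have to be careful, and it should cancel against the over-subtraction of the identity element when passing from the $P$-sum and the $C$-conjugates-sum back to $N_p$.

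The main obstacle, then, is purely the inclusion–exclusion bookkeeping: tracking the contribution of the identity element (counted once in $N_p$ but appearing in $P$, in each of the $p$ conjugates of $C$, etc.) and verifying that the surplus copies of $\p_1^p = \ch(\h_p)$ reorganize precisely into $-\psi_{1,p}$ rather than, say, $-\psi_{1,p} + c\,\h_p$ for some stray constant $c$. I would pin this down by evaluating both sides on the three relevant cycle types that actually occur in $N_p$ — $1^p$, $p^1$, and $d^{(p-1)/d}1^1$ for $d \mid p-1$ — since a class function on $S_p$ that agrees with $1\uparrow_{N_p}^{S_p}$ on all classes meeting $N_p$ and is a virtual character supported (as a difference of permutation-type characters) on those same classes must be equal to it; concretely, $\psi_{0,p-1}\uparrow_{S_{p-1}}^{S_p}$ and $\psi_{1,p}$ both vanish off the classes of cycle type $d^{(p-1)/d}1$ ($d\mid p-1$), $1^p$, and $p^1$, matching the support of $1\uparrow_{N_p}^{S_p}$, so a three-case verification using (\ref{fp}) and the explicit formula for induced characters from cyclic groups (Stanley, Exercise 7.88) closes the proof.
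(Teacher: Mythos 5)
Your opening identification $\psi_{0,p-1}\uparrow_{S_{p-1}}^{S_p}=1\uparrow_C^{S_p}$ and your closing plan (check the identity on the classes that actually meet $N_p$, namely cycle types $1^p$, $p^1$ and $d^{(p-1)/d}\,1^1$ for $d\mid p-1$, and note that all three characters involved vanish elsewhere) are both correct, and the closing plan is in fact exactly what the paper does. But the ``second key step'' as you state it is wrong. By definition $\psi_{0,p}$ \emph{is} $1\uparrow_P^{S_p}$: it is the induction of the trivial character of the cyclic group $\langle w\rangle\cong P$, a permutation character of $S_p$ of degree $(p-1)!$, not the trivial character of $S_p$. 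Hence $1\uparrow_P^{S_p}=\psi_{0,p}$ alone (not $\psi_{0,p}+\psi_{1,p}$), $\ch\psi_{0,p}=\tfrac{1}{p}(\p_1^p+(p-1)\p_p)\ne\h_p$, and $\ch\psi_{1,p}=\tfrac{1}{p}(\p_1^p-\p_p)$, so the relation you want is $\ch\psi_{1,p}=\ch\psi_{0,p}-\p_p$, with $\p_p$ rather than $\h_p$. (What Galois conjugacy actually gives you is $\psi_{1,p}=\psi_{2,p}=\cdots=\psi_{p-1,p}$, so that the \emph{regular} character of $P$ induces to $\psi_{0,p}+(p-1)\psi_{1,p}$; that is a different statement.)

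Because of this mix-up your intermediate rearrangement lands on $1\uparrow_{N_p}^{S_p}=1\uparrow_C^{S_p}-\psi_{1,p}+(\text{stray term})$, which you rightly flagged as suspicious. If you redo the cycle-index bookkeeping with the correction above, using $|N_p|=p(p-1)$, the decomposition $1+(p-1)+p(p-2)=p(p-1)$ of $N_p$ into the identity, $P\setminus\{1\}$, and the disjoint nonidentity parts of the $p$ conjugates of $C$, and the fact that $\p_{\rho(\cdot)}$ is constant on $S_p$-classes, you get
\[
\ch\bigl(1\uparrow_{N_p}^{S_p}\bigr)=\ch\bigl(1\uparrow_C^{S_p}\bigr)-\tfrac{1}{p}\p_1^p+\tfrac{1}{p}\p_p=\ch\bigl(1\uparrow_C^{S_p}\bigr)-\ch\psi_{1,p},
\]
so the inclusion--exclusion route does close once $\p_p$ is used in place of $\h_p$. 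The paper avoids this delicate identity-counting entirely by plugging class representatives directly into $\chi\uparrow_Y^X(g)=\frac{1}{|Y|}\,|C_X(g)|\sum_{h\in g^X\cap Y}\chi(h)$, computing the three values, and then invoking the common support of the three characters; your final paragraph is the same argument, so with the $\psi_{0,p}$ and $\h_p$ versus $\p_p$ confusion repaired your proposal becomes a complete proof, essentially the paper's.
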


\begin{proof}
Note first that for any group $X$, any subgroup $Y \leq X$, any
character $\chi$ of $Y$ and any $g \in X$ we have
\begin{equation} \label{induce}
\chi\uparrow_Y^X(g)=\frac{1}{|Y|}|C_X(g)|\sum_{h \in g^X \cap
Y}\chi(h),
\end{equation}
where $C_X(g)$ is the centralizer of $g$ and $g^X$ is the conjugacy class of $g$ in $X$.  Writing
$N_p=CP$ as above, we have (by definition)
\[
\psi_{0,p-1}\uparrow_{S_{p-1}}^{S_p}=1\uparrow_C^{S_p}
\]
and
\[
\psi_{1,p}=\theta\uparrow_P^{S_p},
\]
where $\theta$ is a one dimensional character of $P$ which maps a generator to $e^{2\pi i \over p}$.
As noted above, for $c \in C \setminus \{1\}$ we have
\[
|c^{S_p} \cap N_p|=p|c^{S_p} \cap C|,
\]
and it follows from (\ref{induce}) that
\[
1\uparrow_{N_p}^{S_p}(c)=1\uparrow_C^{S_p}(c).
\]
Since no conjugate of $c$ lies in $P$, we have
\[
\psi_{1,p}(c)=0
\]
and
\[
1\uparrow_{N_p}^{S_p}(c)=
\psi_{0,p-1}\uparrow_{S_{p-1}}^{S_p}(c)-\psi_{1,p}(c).
\]
For $g \in P \setminus \{1\}$, we have
\[
\sum_{x \in g^{S_p} \cap P}\theta(x)=\sum_{j=1}^{p-1}e^{2\pi
ij/p}=-1.
\]
Using (\ref{induce}) twice, we get
\[
-\psi_{1,p}(g)=\frac{1}{p}|C_{S_p}(g)|(1)
=\frac{1}{p(p-1)}|C_{S_p(g)}|(p-1)=1\uparrow_{N_p}^{S_p}(g).
\]
Since no conjugate of $g$ lies in $S_{p-1}$, we have
\[
\psi_{0,p-1}\uparrow_{S_{p-1}}^{S_p}(g)=0
\]
and
\[
1\uparrow_{N_p}^{S_p}(g)=
\psi_{0,p-1}\uparrow_{S_{p-1}}^{S_p}(g)-\psi_{1,p}(g).
\]
Finally, we have
\begin{eqnarray*}
1\uparrow_{N_p}^{S_p}(1) & = & [S_p:N_p]  = (p-2)!=p(p-2)!-(p-1)!
\\ & = & [S_p:C]-[S_p:P]=
\psi_{0,p-1}\uparrow_{S_{p-1}}^{S_p}(1)-\psi_{1,p}(1).
\end{eqnarray*}
Since every element of $N_p$ is conjugate to an element of $P$ or
an element of $C$, we are done.
\end{proof}

We can now apply a result discovered by W. Kra\'skiewicz and J.
Weyman and independently by R. Stanley (see \cite[Corollary
8.10]{KrWe} or \cite[Exercise 7.88]{St}) to find the decomposition
of $\f_p$ into Schur functions.  Namely, for a standard Young
tableau $T$ with $n$ boxes, let $D(T)$ be the set of all $i \in
[n]$ such that $i+1$ appears in a row of $T$ below the row which
contains $i$.  Define
\[
maj(T):=\sum_{i \in D(T)}i. 
\]
Now, for integers $m,n,k$ and a partition $\lambda$ of $n$, let
$M_{m,k,\lambda}$ be the number of standard Young tableaux $T$ of
shape $\lambda$ which satisfy $maj(T) \equiv m \bmod k$.

\begin{lemma}[{\cite[Corollary 8.10]{KrWe}},{\cite[Exercise
7.88]{St}}] For integers $m,n$, we have
\[
\psi_{m,n}=\sum_{\lambda \vdash n}M_{m,n,\lambda}\cs^\lambda.
\]
\label{kwst}
\end{lemma}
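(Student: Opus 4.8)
The plan is to compute the multiplicity of $\cs^\lambda$ in $\psi_{m,n}$ by evaluating characters at roots of unity. Put $\zeta=e^{2\pi i/n}$, let $\chi^\lambda$ be the character of $\cs^\lambda$ (which is real-valued), and write $f^\lambda(q):=\sum_{T}q^{maj(T)}$ for the sum over standard Young tableaux of shape $\lambda$, so that $M_{m,n,\lambda}$ is the sum of the coefficients of $q^i$ in $f^\lambda(q)$ over all $i\equiv m\bmod n$; by discrete Fourier inversion this gives $M_{m,n,\lambda}=\tfrac1n\sum_{j=0}^{n-1}f^\lambda(\zeta^j)\zeta^{-mj}$. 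On the other hand, $\psi_{m,n}$ is induced from the linear character of $\langle w\rangle$ sending the $n$-cycle $w$ to $\zeta^m$, so by Frobenius reciprocity and the reality of $\chi^\lambda$ the multiplicity of $\cs^\lambda$ in $\psi_{m,n}$ is $\tfrac1n\sum_{j=0}^{n-1}\zeta^{mj}\chi^\lambda(w^j)$. Since $\chi^\lambda(w^{-j})=\chi^\lambda(w^j)$, these two expressions coincide once we establish the identity
\[
\chi^\lambda(w^j)=f^\lambda(\zeta^j)\qquad(0\le j\le n-1),
\]
which is therefore the entire content of the lemma, and I would spend the rest of the argument on it.

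The first ingredient is the product formula
\[
f^\lambda(q)=q^{b(\lambda)}\,\frac{\prod_{i=1}^n(1-q^i)}{\prod_{x\in\lambda}(1-q^{h(x)})},\qquad b(\lambda):=\sum_{i\ge1}(i-1)\lambda_i ,
\]
with $h(x)$ the hook length of the cell $x$. I would deduce it from the principal specialization $s_\lambda(1,q,q^2,\dots)=q^{b(\lambda)}/\prod_{x\in\lambda}(1-q^{h(x)})$ together with $f^\lambda(q)=\big(\prod_{i=1}^n(1-q^i)\big)\,s_\lambda(1,q,q^2,\dots)$, the latter coming from standardizing column-strict tableaux (the theory of $(P,\omega)$-partitions); it also admits a direct inductive proof. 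Letting $q\to1$ recovers the hook length formula, which settles $j=0$. For $j\ne0$ set $d:=n/\gcd(n,j)$, so $\zeta^j$ is a primitive $d$-th root of unity and $w^j$ has cycle type $(d^{n/d})$. In the numerator of the product formula, exactly the $n/d$ factors $1-q^i$ with $d\mid i$ vanish at $q=\zeta^j$; in the denominator, the vanishing factors are those with $d\mid h(x)$. By the abacus ($\beta$-number) description of $d$-cores, a partition of $n$ has at most $n/d$ cells of hook length divisible by $d$, with equality exactly when its $d$-core is empty. If the $d$-core is nonempty, then $\zeta^j$ is a zero of $f^\lambda$, so $f^\lambda(\zeta^j)=0$, and the Murnaghan--Nakayama rule gives that $\chi^\lambda$ vanishes on the class $(d^{n/d})$; if the $d$-core is empty, cancelling the vanishing factors and passing to the limit $q\to\zeta^j$ expresses $f^\lambda(\zeta^j)$ as a product of hook-length-formula quantities attached to the $d$-quotient $(\lambda^{(0)},\dots,\lambda^{(d-1)})$, which one then matches with Littlewood's evaluation of $\chi^\lambda$ on the rectangular class $(d^{n/d})$ in terms of the same $d$-quotient.

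The matching in this last case is, I expect, the main obstacle: both sides equal, up to sign, the product $\binom{n/d}{|\lambda^{(0)}|,\dots,|\lambda^{(d-1)}|}\prod_r\dim\cs^{\lambda^{(r)}}$, and reconciling the sign --- the sign ${\rm sgn}_d(\lambda)$ recording the leg lengths of the $n/d$ rim hooks of size $d$ stripped from $\lambda$ to reach its $d$-core, against the power of $\zeta^j$ produced by $q^{b(\lambda)}$ and by the surviving hook-length factors after cancellation --- calls for careful abacus bookkeeping. As an alternative one could derive the lemma from the Lusztig--Stanley theorem identifying $f^\lambda(q)$ with the fake degree of $\cs^\lambda$ (its graded multiplicity in the coinvariant algebra of $S_n$), together with the fact that the $\langle w\rangle$-isotypic decomposition of $\cc[S_n]$ and the degree decomposition of the coinvariant algebra are two refinements of the regular representation that agree after reduction modulo $n$; but verifying that agreement, via the Garsia--Stanton descent monomial basis, is of comparable difficulty.
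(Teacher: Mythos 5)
The paper does not prove this lemma: it is cited verbatim as a known result of Kra\'skiewicz--Weyman and Stanley, with no argument supplied. There is thus no ``paper's proof'' to compare against, only your independent attempt.

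Your strategy is the standard one. The two reductions you perform are correct: discrete Fourier inversion gives
$M_{m,n,\lambda}=\tfrac1n\sum_j f^\lambda(\zeta^j)\zeta^{-mj}$, and Frobenius reciprocity together with the reality of $\chi^\lambda$ and the conjugacy of $w^j$ with $w^{-j}$ gives
$\langle\psi_{m,n},\chi^\lambda\rangle=\tfrac1n\sum_j\zeta^{-mj}\chi^\lambda(w^j)$, so the lemma is indeed equivalent to the pointwise identity $\chi^\lambda(w^j)=f^\lambda(\zeta^j)$. This identity is Springer's theorem on regular elements specialized to $S_n$ (equivalently, it is what Kra\'skiewicz--Weyman actually prove), and the $q$-hook length formula for $f^\lambda$ that you invoke is correct. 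Your outline of the $j\ne0$ case via $d$-cores, $d$-quotients, and Littlewood's formula for $\chi^\lambda$ on the rectangular class $(d^{n/d})$ is also the right machinery.

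However, the proposal is not a proof. You explicitly flag the reconciliation of signs --- the leg-length sign $\mathrm{sgn}_d(\lambda)$ from Murnaghan--Nakayama versus the root of unity produced by $q^{b(\lambda)}$ and the surviving hook factors --- as ``the main obstacle'' and do not carry it out. That sign computation (an abacus argument, carried out for instance in the Kra\'skiewicz--Weyman paper and, in the reflection-group setting, in Springer's paper) is the genuine content of the theorem; without it one has only established $|\chi^\lambda(w^j)|=|f^\lambda(\zeta^j)|$ in the nonvanishing case, and a sign discrepancy depending on $\lambda$ would wreck the Schur expansion claimed by the lemma. The alternative route you mention via the coinvariant algebra and Garsia--Stanton descent monomials has the same unresolved core. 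So the approach is sound, but as written this is an outline with the decisive step left open.
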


\begin{corollary}
For any prime $p$, we have
\[
\f_p=\sum_{\lambda \vdash
p}(M_{0,p-1,\lambda}-M_{1,p,\lambda})\s_\lambda.
\]
\label{schurd}
\end{corollary}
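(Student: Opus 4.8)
The plan is to combine Lemma~\ref{chardiff} with Lemma~\ref{kwst}. Since $\f_p$ is by definition the Frobenius characteristic of $1\uparrow_{N_p}^{S_p}$, applying $\ch$ to the character identity of Lemma~\ref{chardiff} yields
\[
\f_p \;=\; \ch\left(\psi_{0,p-1}\uparrow_{S_{p-1}}^{S_p}\right) - \ch\left(\psi_{1,p}\right),
\]
so it suffices to compute the Schur expansion of each term on the right-hand side.

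The second term is immediate from Lemma~\ref{kwst} with $m=1$ and $n=p$, which gives $\ch(\psi_{1,p}) = \sum_{\lambda \vdash p} M_{1,p,\lambda}\s_\lambda$. For the first term, Lemma~\ref{kwst} with $m=0$ and $n=p-1$ gives $\ch(\psi_{0,p-1}) = \sum_{\mu \vdash p-1} M_{0,p-1,\mu}\s_\mu$. Inducing a representation from $S_{p-1}$ to $S_p$ corresponds, on the level of Frobenius characteristics, to multiplication by $\s_1$ (a standard property of the Frobenius characteristic, equivalently the branching rule for symmetric groups), and by Pieri's rule $\s_\mu\s_1 = \sum \s_\lambda$, the sum ranging over the partitions $\lambda \vdash p$ obtained from $\mu$ by adding a single box. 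Hence
\[
\ch\left(\psi_{0,p-1}\uparrow_{S_{p-1}}^{S_p}\right) \;=\; \sum_{\lambda \vdash p}\left(\ \sum_{\mu \subset \lambda,\ |\mu| = p-1} M_{0,p-1,\mu}\right)\s_\lambda.
\]
Comparing this with the statement of the corollary, it remains only to prove that for every $\lambda \vdash p$,
\[
M_{0,p-1,\lambda} \;=\; \sum_{\mu \subset \lambda,\ |\mu| = p-1} M_{0,p-1,\mu}.
\]

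This last identity is a routine bijection, and it is the only real content of the proof. Deleting the box containing the entry $p$ from a standard Young tableau $T$ of shape $\lambda \vdash p$ produces a standard Young tableau $T'$ of some shape $\mu \vdash p-1$ with $\mu \subset \lambda$; conversely, given a shape $\mu \subset \lambda$ with $|\mu| = p-1$ and a standard Young tableau $T'$ of shape $\mu$, placing the entry $p$ in the unique box of $\lambda \setminus \mu$ recovers $T$. Thus standard Young tableaux of shape $\lambda$ are in bijection with the disjoint union, over the shapes $\mu \subset \lambda$ of size $p-1$, of the standard Young tableaux of shape $\mu$. Moreover, the descent sets $D(T)$ and $D(T')$ agree on $[p-2]$, and the only index at which they can possibly differ is $p-1$; since $p-1 \equiv 0 \bmod (p-1)$, this forces $maj(T) \equiv maj(T') \bmod (p-1)$. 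Consequently the bijection restricts to one between the standard Young tableaux of shape $\lambda$ with $maj \equiv 0 \bmod (p-1)$ and the disjoint union, over the shapes $\mu \subset \lambda$ of size $p-1$, of the standard Young tableaux of shape $\mu$ with $maj \equiv 0 \bmod (p-1)$; this is exactly the displayed identity. I do not expect any genuine obstacle: once Lemmas~\ref{chardiff} and~\ref{kwst} are in hand, the argument is entirely bookkeeping, with the one delicate point being the observation that the major index is preserved modulo $p-1$ upon deleting the largest entry of a standard Young tableau.
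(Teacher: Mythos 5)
Your proof is correct and takes essentially the same route as the paper: both apply Lemma~\ref{kwst} to $\psi_{1,p}$ and $\psi_{0,p-1}$, then handle the induction $\psi_{0,p-1}\uparrow_{S_{p-1}}^{S_p}$ via the branching rule, with the crucial observation that deleting the box containing $p$ from a standard Young tableau of shape $\lambda\vdash p$ changes the major index by $0$ or $p-1$, hence preserves it modulo $p-1$. The paper phrases this as the two-case computation $maj(T')=maj(T)$ or $maj(T)-(p-1)$, while you phrase it in terms of the descent sets agreeing on $[p-2]$, but the content is identical.
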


\begin{proof}
It follows directly from Lemma \ref{kwst} that
\[
\psi_{1,p}=\sum_{\lambda \vdash p}M_{1,p,\lambda}\cs^\lambda.
\]
It also follows that
\[
\psi_{0,p-1}=\sum_{\rho \vdash p-1}M_{0,p-1,\rho}\cs^\rho.
\]
Now let $\lambda$ be a partition of $p$ and let $T$ be a standard
Young tableau of shape $\lambda$.  Let $T^\prime$ be the standard
Young tableau obtained by removing the box containing $p$ from
$T$. Then
\[
maj(T^\prime)=\left\{
\begin{array}{ll}
maj(T) & \mbox{if } p-1 \not\in D(T), \\ maj(T)-(p-1) & \mbox{if }
p-1 \in D(T).
\end{array}
\right.
\]
In particular,
\[
maj(T^\prime) \equiv maj(T) \bmod (p-1).
\]
It now follows from the branching rule (see for example
\cite[Theorem 2.8.3]{Sa}) that
\[
\psi_{0,p-1}\uparrow_{S_{p-1}}^{S_p}=\sum_{\lambda \vdash
p}M_{0,p-1,\lambda}\cs^\lambda,
\]
and our corollary follows from Lemma \ref{chardiff}.
\end{proof}

Collecting our results from this section and the last, we get the
following theorem.

\begin{theorem}
Let $n$ be a  nonnegative integer and   $p$ be a prime.  Let
$\q_{n,p,i},\c_{n,p,i},\d_{n,p,i}$ be the Frobenius characteristics of
the $\cc[S_n]$-modules
$\rh_{\dim \de\qc-i}(\de\qc)$, $\rh_{\dim \pcc-i}(\pcc)$ and
$\rh_{\dim \hmc-i}(\hmc)$, respectively.  Then for all $i \le p-3$,
\begin{eqnarray*}
q_{n,p,i} &=& \c_{n,p,i} \\ & = & \sum_{k=0}^{\lfloor {n \over p}\rfloor}\d_{n-kp,p,i}
\e_k[-\h_p+\p_1^p+(p-1)\p_p+p\p_1\sum_{d|p-1}\phi(d)\p_d^{(p-1)/d}]
\\ & = &  \sum_{k=0}^{{\lfloor {n \over p}\rfloor }}\d_{n-kp,p,i}
\e_k[\sum_{\scriptsize\begin{array}{c} \lambda \vdash
p\\ \lambda \neq (p) \end{array}}(M_{0,p-1,\lambda}-M_{1,p,\lambda})\s_\lambda].
\end{eqnarray*}
  \label{fr}
\end{theorem}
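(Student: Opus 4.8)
The plan is to assemble the ingredients developed in Sections \ref{qcpc} and \ref{hmpc}; there is no genuinely hard step, and the argument is essentially bookkeeping with homological degrees and Frobenius characteristics.

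For the first equality I would note that since $i\le p-3$ we have $\dim\de\qc-i=\dim\pcc-i\ge t-(p-3)$, where $t=\lfloor n/p\rfloor-1=\dim\de\qc=\dim\pcc$ by Theorem \ref{topcor}(1). Hence Theorem \ref{topcor}(2) gives $\rh_{\dim\de\qc-i}(\de\qc)\cong_{S_n}\rh_{\dim\pcc-i}(\pcc)$, and applying the Frobenius characteristic yields $\q_{n,p,i}=\c_{n,p,i}$.

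For the middle equality I would run the inflation machinery. By Lemma \ref{inflem}, $\pcc\cong\hmc_\mm$ with $\mm$ the constant tuple $(p-2)!$, and the deflating map intertwines the $S_n$-actions, so Lemma \ref{equinfl} applies with $r=\dim\pcc-i$. Using the facts listed after Lemma \ref{equinfl} (one orbit of $k$-faces for each $0\le k\le\lfloor n/p\rfloor$, stabilizer $G_F=H\times S_{\Omega^-}$ with $H\cong S_k[S_p]$, link $\lk_{\hmc}(F)\cong{\mathsf M}_p(n-kp)$, and the trivial cross-actions), the orbit summand indexed by a $k$-face $F$ is $V_r(F)\uparrow_{G_F}^{S_n}$, which the manipulation of induced modules carried out just before Theorem \ref{frch1} rewrites as
\[
\Bigl(\rh_{k-1}(\dot F_{\mm(F)})\uparrow_{H}^{S_{\Omega^+}}\ \otimes\ \rh_{r-k}({\mathsf M}_p(n-kp))\Bigr)\uparrow_{S_{\Omega^+}\times S_{\Omega^-}}^{S_n}.
\]
Since $r-k=(\dim\pcc-i)-k=\dim{\mathsf M}_p(n-kp)-i$, the second tensor factor has Frobenius characteristic $\d_{n-kp,p,i}$; by Theorem \ref{frch1} the first has Frobenius characteristic $\e_k[\f_p-\h_p]$; and the Frobenius characteristic of an external tensor product induced from $S_{\Omega^+}\times S_{\Omega^-}$ up to $S_n$ is the product of the two characteristics. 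So the $F$-summand contributes $\d_{n-kp,p,i}\,\e_k[\f_p-\h_p]$; summing over the $\lfloor n/p\rfloor+1$ orbit representatives and substituting the expansion (\ref{fp}) for $\f_p$ produces the middle line.

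For the last equality I would replace $\f_p-\h_p$ using Corollary \ref{schurd}. The coefficient of $\s_{(p)}=\h_p$ in $\f_p$ is $M_{0,p-1,(p)}-M_{1,p,(p)}$, and since the unique standard Young tableau of shape $(p)$ has empty descent set (hence $maj=0$) this equals $1-0=1$; subtracting $\h_p$ therefore deletes exactly the $\lambda=(p)$ term, leaving $\sum_{\lambda\vdash p,\ \lambda\neq(p)}(M_{0,p-1,\lambda}-M_{1,p,\lambda})\s_\lambda$ inside the plethysm, which is the last line. The only point requiring care is the degree bookkeeping in the middle step: one must check both that $\dim\pcc-i$ falls in the range $j\ge t-(p-3)$ where Theorem \ref{topcor}(2) holds — which is exactly the hypothesis $i\le p-3$ — and that, after the inflation decomposition, the homological shift by $|F|=k$ places the link contribution in degree $\dim{\mathsf M}_p(n-kp)-i$ rather than in some other degree. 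Nothing else in the argument needs input beyond what is already established.
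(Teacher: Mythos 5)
Your proposal is correct and follows exactly the route the paper intends. The paper itself gives no detailed argument (it simply says "Collecting our results from this section and the last, we get the following theorem"), and your write-up supplies precisely the bookkeeping the paper leaves implicit: the first equality is Theorem \ref{topcor}(2) applied in degree $\dim\pcc - i \ge t-(p-3)$; the middle equality is Lemma \ref{equinfl} together with the orbit/stabilizer/link facts, Theorem \ref{frch1}, and equation (\ref{fp}), with the degree shift $r-|F|=\dim{\mathsf M}_p(n-kp)-i$ checked as you did; and the last equality follows from Corollary \ref{schurd} together with your observation that the coefficient of $\s_{(p)}$ in $\f_p$ is $1$.
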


\begin{remark} \label{remform} Note  that the restriction $i \le p-3$ is needed only for the first equation  of Theorem~\ref{fr}. 
 Note also that  the theorem is vacuous for $p=2$ and  that the theorem refers only to top homology in the case $p=3$.   In the cases $p=2,3$, it is easy to check that the sum inside the  second plethysm vanishes giving $ \c_{n,p,i} = \d_{n,p,i}$, which also follows from the fact that $\mathsf C_p(n)$ and $\mathsf M_p(n)$ are isomorphic complexes when $p=2,3$.  \end{remark}

Say $p$ is any integer greater than $1$ that divides $n$.  Then $\hmc$ has dimension $t:=
\frac{n}{p}  -1$, but every face of dimension $t-1$ is
contained in a unique face of dimension $t$.  It follows that we
can reduce $\hmc$ to a complex of dimension $t-1$ by a series of
elementary collapses (see for example \cite{Co}), giving,
\begin{equation}\label{mpn} \rh_{\dim(\hmc)}(\hmc) = 0,\end{equation} 
unless $n=0$ in which case
$ \ch \rh_{\dim(\hmc)}(\hmc) = 1.$ 
The following result is thus a consequence of Theorem~\ref{fr}.

\begin{corollary} If $p$ is an odd prime that divides $n$ then
\begin{eqnarray*}
& & \hspace{-.6in}\ch \rh_{\dim(\de\qc)}(\de\qc)\\ &=& \ch \rh_{\dim(\pcc)}(\pcc)\\ &=& e_{n \over p}[-\h_p+\p_1^p+(p-1)\p_p+p\p_1\sum_{d|p-1}\phi(d)\p_d^{(p-1)/d}]\\ &=& \e_{n \over p}[\sum_{\scriptsize\begin{array}{c} \lambda \vdash
p\\ \lambda \neq (p) \end{array}}(M_{0,p-1,\lambda}-M_{1,p,\lambda})\s_\lambda].\end{eqnarray*} 
\end{corollary}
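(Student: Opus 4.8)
The plan is to obtain the corollary as the special case $i=0$ of Theorem~\ref{fr}, using the hypothesis $p\mid n$ to kill all but one summand. Since $p$ is an odd prime we have $p\ge 3$, so $i=0$ meets the hypothesis $i\le p-3$ and all three equalities of Theorem~\ref{fr} are available; this is exactly why the ``odd'' hypothesis is needed, the theorem being vacuous for $p=2$. By Theorem~\ref{topcor}(1), $\dim\de\qc=\dim\pcc=\lfloor n/p\rfloor-1=n/p-1$, so the symmetric functions $\q_{n,p,0}$ and $\c_{n,p,0}$ of Theorem~\ref{fr} are precisely $\ch\rh_{\dim(\de\qc)}(\de\qc)$ and $\ch\rh_{\dim(\pcc)}(\pcc)$; the first displayed equality of the corollary is then literally the equality $\q_{n,p,0}=\c_{n,p,0}$ of Theorem~\ref{fr}.

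The heart of the argument is to simplify the sum $\sum_{k=0}^{n/p}\d_{n-kp,p,0}\,\e_k[\cdots]$ on the right of Theorem~\ref{fr}. Because $p\mid n$, each index $m:=n-kp$ (for $0\le k\le n/p$) is a nonnegative multiple of $p$. For $m>0$ the discussion leading to (\ref{mpn}) shows that every codimension-one face of ${\mathsf M}_p(m)$ lies in a unique facet, hence is a free face, so ${\mathsf M}_p(m)$ collapses to a complex of dimension $\dim{\mathsf M}_p(m)-1$ and $\rh_{\dim({\mathsf M}_p(m))}({\mathsf M}_p(m))=0$; thus $\d_{m,p,0}=0$ whenever $k<n/p$. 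Only the term $k=n/p$ survives, and there $m=0$: the complex ${\mathsf M}_p(0)$ has the empty set as its sole face, so $\dim{\mathsf M}_p(0)=-1$ and $\rh_{-1}({\mathsf M}_p(0))$ is the trivial representation of $S_0$, giving $\d_{0,p,0}=1$. The whole sum therefore reduces to $\e_{n/p}[-\h_p+\p_1^p+(p-1)\p_p+p\p_1\sum_{d\mid p-1}\phi(d)\p_d^{(p-1)/d}]$, which is the second displayed line of the corollary.

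Finally, for the third displayed line I would run the identical reduction starting instead from the Schur-function form in Theorem~\ref{fr}. There the inner plethysm argument is $\sum_{\lambda\vdash p,\ \lambda\ne(p)}(M_{0,p-1,\lambda}-M_{1,p,\lambda})\s_\lambda$, which by Corollary~\ref{schurd} equals $\f_p-\h_p=\f_p-\s_{(p)}$: the unique standard Young tableau of shape $(p)$ has $maj=0$, so $M_{0,p-1,(p)}=1$ and $M_{1,p,(p)}=0$, and the $\lambda=(p)$ term of $\f_p$ cancels $\s_{(p)}$ exactly, which is why the sum is restricted to $\lambda\ne(p)$. The same vanishing of $\d_{n-kp,p,0}$ for $k<n/p$ then yields $\e_{n/p}[\sum_{\lambda\vdash p,\ \lambda\ne(p)}(M_{0,p-1,\lambda}-M_{1,p,\lambda})\s_\lambda]$. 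I do not expect any genuine obstacle here: the corollary is a direct specialization of Theorem~\ref{fr}, and the only point demanding a little care is the boundary term $m=0$ of the sum, namely confirming that the empty matching complex contributes the trivial $S_0$-representation (equivalently the constant symmetric function $1$), which is precisely the content of the remark accompanying (\ref{mpn}).
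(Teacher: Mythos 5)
Your proposal is correct and matches the paper's argument: the paper also deduces the corollary directly from Theorem~\ref{fr} (with $i=0$, valid since $p\ge 3$) together with equation~(\ref{mpn}), which makes every summand with $k<n/p$ vanish and leaves only the $k=n/p$ term where $\d_{0,p,0}=1$. Your side comments about $\f_p-\h_p$ via Corollary~\ref{schurd} are not needed, since the Schur-function form is already a line of Theorem~\ref{fr} and the same one-term reduction applies verbatim, but they do no harm.
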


\begin{remark}

As noted above, we have $C_3(3k) \cong M_3(3k)$ for all $k$.  It follows from the first equality in Theorem \ref{fr} and (\ref{mpn}) that
\[
\rh_{k-1}(\de \mathcal A_3(S_{3k}))=0.
\]

This was observed previously in \cite{AsSm}, with attribution to J. Thompson.
\end{remark}

\section{The top homology of some hypergraph
matching complexes} \label{hyp}

In this section, we present a conjecture on the $\cc[S_n]$-module
structure of the top homology group of $\hmc$ in the case $n
\equiv 1 \bmod p$ and prove our conjecture in the cases  $p \leq 3$ and   ${n-1 \over p} \le 2$. 
For a symmetric function ${\mathsf
f}=\sum_{\lambda}a_\lambda \s_\lambda$ and a positive integer $r$, define
\[
{\mathsf f}|_r:=\sum_{l(\lambda)=r}a_\lambda \s_\lambda,
\] where $l(\lambda)$ is the number of 
 parts of the partition $\lambda$. 

\begin{conjecture}
For  integers $k\ge 1$, $ p>1$ and $n=kp+1$, we have
\[
\ch\rh_{k-1}(\hmc) = (\e_k[\h_p]\h_1)|_{k+1}.
\]
\label{topcon}
\end{conjecture}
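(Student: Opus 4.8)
\medskip
\noindent\emph{Reduction.}
The plan is to recast Conjecture~\ref{topcon} as an injectivity statement for the top boundary map, and then prove it for $p\le 3$. Since $\dim\hmco=k-1$ we have $\rh_{k-1}(\hmco)=\ker\partial_{k-1}$ for the simplicial boundary $\partial_{k-1}\colon C_{k-1}(\hmco)\to C_{k-2}(\hmco)$. A $(k-1)$-simplex of $\hmco$ is a $k$-matching, which covers $kp$ of the $kp+1$ points, and its stabilizer $(S_k[S_p])\times S_1$ acts on the oriented simplex by the sign of the outer $S_k$ and trivially on the other factors; hence $\ch C_{k-1}(\hmco)=\e_k[\h_p]\h_1$, and, by a similar description, $\ch C_{k-2}(\hmco)=\e_{k-1}[\h_p]\h_{p+1}$. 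As shown in Section~\ref{hyp}, every Schur constituent of $\e_k[\h_p]\h_1$ has at most $k+1$ parts, and a similar argument shows every constituent of $\e_{k-1}[\h_p]\h_{p+1}$ has at most $k$ parts. By $S_{kp+1}$-equivariance, $\partial_{k-1}$ annihilates the isotypic part of $C_{k-1}(\hmco)$ spanned by the $\cs^\lambda$ with $l(\lambda)=k+1$, so that part lies in $\rh_{k-1}(\hmco)$ and $\ch\rh_{k-1}(\hmco)\big|_{k+1}=(\e_k[\h_p]\h_1)\big|_{k+1}$ with no further argument. Thus Conjecture~\ref{topcon} is equivalent to the assertion that $\rh_{k-1}(\hmco)$ has no constituent $\cs^\lambda$ with $l(\lambda)\le k$, i.e.\ that $\partial_{k-1}$ is injective on the isotypic part of the $\cs^\lambda$ with $l(\lambda)\le k$.

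\medskip
\noindent\emph{The case $p=2$} is immediate from Bouc's determination of $\rh_*({\mathsf M}_2(n))$ as an $S_n$-module (see (\ref{bouc})). For $p=3$ I would induct on $k$. The base case $k=1$ is clear, since ${\mathsf M}_3(4)$ is four points with $\rh_0({\mathsf M}_3(4))=\cs^{(3,1)}$. For the step, fix a point $v$ and let $A\subseteq\hmcot$ be the subcomplex of matchings not covering $v$, so $A={\mathsf M}_3([3k+1]\setminus\{v\})$. The relative chain complex of $(\hmcot,A)$ splits, equivariantly for $S_{3k}=\mathrm{Stab}(v)$, as a direct sum over the triples $E\ni v$ (permuted by $S_{3k}$) of shifted copies of $\widetilde C_{*-1}({\mathsf M}_3([3k+1]\setminus E))$, the factor $S_{E\setminus\{v\}}$ acting trivially on its copy; and $\rh_{k-1}({\mathsf M}_3(3k))=0$ by (\ref{mpn}). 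The long exact sequence of the pair therefore yields, in the top degree,
\[
0\longrightarrow\rh_{k-1}(\hmcot)\longrightarrow\bigoplus_{E\ni v}\rh_{k-2}\big({\mathsf M}_3([3k+1]\setminus E)\big)\stackrel{\gamma}{\longrightarrow}\rh_{k-2}({\mathsf M}_3(3k)),
\]
where, as an $S_{3k}$-module, the middle term is $\mathrm{Ind}_{S_2\times S_{3k-2}}^{S_{3k}}$ of the outer tensor of the trivial module with $\rh_{k-2}({\mathsf M}_3(3k-2))$, and $\gamma$ is, up to sign, the sum over $E$ of the inclusion-induced maps ${\mathsf M}_3([3k+1]\setminus E)\hookrightarrow A$. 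Since ${\mathsf M}_3(3k-2)={\mathsf M}_3(3(k-1)+1)$, the induction hypothesis gives $\ch\rh_{k-2}({\mathsf M}_3(3k-2))=(\e_{k-1}[\h_3]\h_1)\big|_k$, all of whose constituents have exactly $k$ parts.

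\medskip
\noindent\emph{The main obstacle.}
By the previous remark and Pieri's rule, every constituent of the middle term above has $k$ or $k+1$ parts. Also $\ch C_{k-2}({\mathsf M}_3(3k))=\e_{k-1}[\h_3]\h_3$, whose constituents have at most $k$ parts, so $\rh_{k-2}({\mathsf M}_3(3k))$ has no $(k+1)$-part constituent; hence $\gamma$ kills the isotypic part of the $(k+1)$-part constituents of the middle term, and $\ker\gamma$ always contains it. The content of the conjecture for $p=3$ is the reverse inclusion: that $\gamma$ is injective on the isotypic part of the $k$-part constituents, so that $\ker\gamma$, which is $\rh_{k-1}(\hmcot)$ restricted to $S_{3k}$, has \emph{only} $(k+1)$-part constituents. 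Granting that, a $\cc[S_{3k+1}]$-constituent of $\rh_{k-1}(\hmcot)$ with $\le k$ parts would restrict to $S_{3k}$ with a $\le k$-part constituent, which is then impossible, and the conjecture follows. Proving that $\gamma$ is injective on the $k$-part part — equivalently, that the $k$-part part of $\bigoplus_{E}\rh_{k-2}({\mathsf M}_3(3k-2))$ maps injectively into $\rh_{k-2}({\mathsf M}_3(3k))$ — is, I expect, the crux. It forces one to control the codimension-one homology group $\rh_{k-2}({\mathsf M}_3(3k))$, which lies just beyond the reach of Theorem~\ref{fr}; the natural route is to enlarge the induction to a simultaneous statement about the top and codimension-one homology of ${\mathsf M}_3(m)$ in all three residue classes of $m$ modulo $3$, each step again handled by the deletion long exact sequence above. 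When $k\le 2$ the conjecture (for $p\le 3$) is already contained in Ksontini's computations in \cite{Ks3}.

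\medskip
\noindent\emph{Reformulation via odd partitions.}
Finally one puts $(\e_k[\h_3]\h_1)\big|_{k+1}$ in the stated form. By Pieri it equals $\sum_\mu\cs^{\mu+(1)}$, summed over the $k$-part Schur constituents $\cs^\mu$ of $\e_k[\h_3]$; a count of the relevant plethysm coefficients (for instance via $\omega(\e_k[\h_3])=\h_k[\e_3]$ and Littlewood's Schur expansion of $\h_k[\e_3]$) shows these $\mu$ are exactly the partitions of $3k$ into $k$ odd parts, so the partitions $\lambda=\mu+(1)$ are exactly the partitions of $3k+1$ into $k+1$ odd parts (the map $\lambda\mapsto\tfrac12(\lambda-(1^{k+1}))$ being a bijection with the partitions of $k$). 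This identifies $\rh_{k-1}(\hmcot)\cong_{S_{3k+1}}\bigoplus_\lambda\cs^\lambda$ over such $\lambda$, and combining with Theorems~\ref{topcor} and~\ref{fr} gives the analogous decomposition of $\rh_{k-1}(\de\ca_3(S_{3k+1}))$.
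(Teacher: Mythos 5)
Your reduction is correct and matches the paper's: by Proposition~\ref{cprop} and Pieri's rule, every Schur constituent of $\ch C_{k-1}(\hmco)=\e_k[\h_p]\h_1$ has at most $k+1$ parts and every constituent of $\ch C_{k-2}(\hmco)=\e_{k-1}[\h_p]\h_{p+1}$ has at most $k$ parts, so the $(k+1)$-part isotypic component sits in $\ker\partial_{k-1}$; the content of the conjecture is therefore that $\rh_{k-1}(\hmco)$ has no constituent of length $\le k$. This is exactly Corollaries~\ref{submod} and~\ref{submodcor}. Your long exact sequence for the pair $(\hmcot,A)$ is also the same sequence the paper uses (Lemma~\ref{leslem} and Corollary~\ref{les2}), just constructed directly rather than cited from Ksontini. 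The closing reformulation via odd partitions is fine and agrees with equation~(\ref{oddeq}).

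The genuine gap is that you never prove the $p=3$ case: you explicitly identify ``that $\gamma$ is injective on the isotypic part of the $k$-part constituents'' as the crux and then only sketch a program (an enlarged induction tracking both top and codimension-one homology across all residue classes mod $3$) without carrying it out. That program would require controlling $\rh_{k-2}({\mathsf M}_3(3k))$, which is exactly what the paper avoids. The paper's actual argument never touches the target of $\gamma$ at all. Instead it works entirely inside the middle term $\rh_{k-2}({\mathsf M}_3(3k-2))\uparrow_{S_{3k-2}\times S_2}^{S_{3k}}$, whose multiplicities are fully known by induction and Pieri's rule: from the embedding of $\rh_{k-1}(\hmcot)\downarrow_{S_{3k}}$ into this module together with the branching rule, one first narrows a hypothetical $k$-part constituent $\cs^\lambda$ to have $\lambda_k\in\{2,3\}$, and then in each case derives a contradiction by observing that the restriction of $\cs^\lambda$ to $S_{3k}$ would force a constituent (such as $\cs^{(3,\dots,3)}$ or $\cs^{(5,3,\dots,3,2,2)}$) to appear with multiplicity at least two, whereas it appears with multiplicity exactly one in the middle term, the unique copy being already accounted for by the known $(k+1)$-part submodule from Corollary~\ref{submodcor}. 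This multiplicity bookkeeping replaces the injectivity of $\gamma$ that your outline would need to establish, and it is substantially cheaper than controlling codimension-one homology. So while your framing of the problem is right and the exact sequence is the right tool, the proposal as written stops at the point where the real work begins and gestures toward a harder route than the one that actually succeeds.
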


Note that by Proposition \ref{zero} below and Pieri's rule (see for example
\cite[Theorem 7.15.7]{St}), Conjecture~\ref{topcon} implies that if the coefficient of $\s_\lambda$ in the expansion of $\ch\rh_{k-1}(\hmc)$ is nonzero, then $l(\lambda)=k+1$ and $\lambda_{k+1}=1$

Our conjecture in the case  $k =1$ is easy to verify. The complex ${\mathsf M}_p(p+1)$ is a discrete point set, and thus its $0^{th}$ chain space is a permutation module on $p$-sets and
its $0^{th}$ homology group is obtained by taking the
quotient of this permutation module by the trivial module.  It follows that 
\begin{equation}\label{basecase} \rh_0({\mathsf M}_p(p+1) )\cong_{S_{p+1}} \cs^{(p,1)}.\end{equation}
  By Pieri's rule, we have
$\s_{(p,1)} = (\e_1[\h_p] \h_1)|_2$.

Now let us consider that  case  $k=2$.  It is known (see \cite[I.8]{MacD}) that
\begin{equation} \label{e2}  \e_2[\h_p] = \sum_{\scriptsize\begin{array}{c} (\lambda_1,\lambda_2)\vdash 2p \\ \lambda_1,\lambda_2 \text{ odd}\end{array} } \s_{(\lambda_1,\lambda_2)}.\end{equation}
Hence by Pieri's rule, the case $k=2$ of the conjecture is equivalent to the following result.
\begin{theorem}[Ksontini {\cite[Lemma 3.5]{Ks3}}] \label{kson1} For all integers $p \ge 2$,
$$\ch \rh_{1}(\mathsf M_p(2p+1)) =\sum_{\scriptsize\begin{array}{c} (\lambda_1,\lambda_2)\vdash 2p \\ \lambda_1,\lambda_2 \,\,\rm{ odd}\end{array} } \s_{(\lambda_1,\lambda_2,1)}.$$
\end{theorem}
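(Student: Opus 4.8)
The plan is to use the fact that ${\mathsf M}_p(2p+1)$ is one-dimensional --- it is a graph, since $\dim {\mathsf M}_p(2p+1)=\lfloor\tfrac{2p+1}{p}\rfloor-1=1$ for $p\ge 2$ --- and to read off $\rh_1$ from the equivariant Euler characteristic of its (reduced) chain complex. The first step is to observe that ${\mathsf M}_p(2p+1)$ is connected, so that $\rh_0({\mathsf M}_p(2p+1))=0$ and, since the complex is nonempty and one-dimensional, $\rh_1$ is its only nonvanishing reduced homology group. Connectivity is elementary: if $A,A'$ are $p$-subsets of $[2p+1]$ differing in a single element, then $C:=[2p+1]\setminus(A\cup A')$ has exactly $p$ elements and is disjoint from both $A$ and $A'$, so $A$ and $A'$ are joined by the path $A-C-A'$ in ${\mathsf M}_p(2p+1)$; since the Johnson graph on $p$-subsets of $[2p+1]$ is connected, so is ${\mathsf M}_p(2p+1)$.

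The second step is to identify the chain spaces as $\cc[S_{2p+1}]$-modules. The space $C_0$ is the permutation module on the $p$-subsets of $[2p+1]$, whose Frobenius characteristic is $\h_p\h_{p+1}$, and $C_{-1}$ is the trivial module, with characteristic $\h_{2p+1}$. The space $C_1$ of oriented $1$-chains is the module induced up to $S_{2p+1}$ from the edge stabilizer $(S_p\wr S_2)\times S_1$ by the character that is trivial on the base subgroup $S_p\times S_p$ and on $S_1$ and equals the sign character on the top $S_2$ (which records the change of orientation when the two hyperedges of a $2$-matching are swapped); exactly as in the computation of $\ch C_{k-1}({\mathsf M}_p(kp+1))$ used in the formulation of Conjecture~\ref{topcon}, this gives $\ch C_1=\e_2[\h_p]\,\h_1$. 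Since $\rh_i({\mathsf M}_p(2p+1))=0$ for $i\ne 1$, the Hopf trace formula (the equivariant Euler characteristic identity in the representation ring) yields
\[
\ch\rh_1({\mathsf M}_p(2p+1))=\e_2[\h_p]\,\h_1-\h_p\h_{p+1}+\h_{2p+1}.
\]

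The last step is a symmetric-function computation. Expanding $\e_2[\h_p]$ by (\ref{e2}) and applying Pieri's rule, one checks that the terms of $\e_2[\h_p]\,\h_1$ indexed by partitions with at most two parts are exactly $\sum_{i=1}^{p}\s_{(2p+1-i,i)}=\h_p\h_{p+1}-\h_{2p+1}$, so these cancel against the remaining terms on the right-hand side; what is left are the terms of $\e_2[\h_p]\,\h_1$ indexed by three-part partitions, namely $\sum_{(\lambda_1,\lambda_2)\vdash 2p,\ \lambda_1,\lambda_2\ \mathrm{odd}}\s_{(\lambda_1,\lambda_2,1)}$, which is the claimed answer. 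The one place requiring a little care is the partition $(p,p)$, which occurs in $\e_2[\h_p]$ when $p$ is odd and for which Pieri's rule produces only two terms rather than three; tracking it separately shows that the cancellation of the two-row terms still goes through. Apart from this bookkeeping, the only genuinely geometric input is the connectivity of ${\mathsf M}_p(2p+1)$, which is immediate, so I do not expect any real obstacle.
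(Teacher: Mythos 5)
Your proof is correct, and it takes a genuinely different route from the paper's. The paper proves this theorem using the long exact sequence of Corollary~\ref{les2}: it embeds $\rh_1(\mathsf M_p(2p+1))\downarrow_{S_{2p}}^{S_{2p+1}}$ into $\rh_0(\mathsf M_p(p+1))\uparrow_{S_{p+1}\times S_{p-1}}^{S_{2p}}$, uses the multiplicity-one structure of the target to rule out two-row constituents (aided by Corollary~\ref{submodcor} to exhibit a ``doubling'' contradiction), and concludes via Corollary~\ref{submodcor} again. You instead observe that $\mathsf M_p(2p+1)$ is a connected graph, so its reduced homology is concentrated in degree $1$, and then read off $\ch\rh_1$ from the equivariant Euler--Poincar\'e identity together with Proposition~\ref{cprop} and a Pieri computation. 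Your connectivity argument is correct (and could alternatively be replaced by an appeal to Theorem~\ref{athan}, which gives $\rh_0=0$ directly), and the symmetric-function bookkeeping, including the edge case of $(p,p)$ when $p$ is odd, checks out. What each approach buys: yours is shorter and more elementary, requiring no long exact sequence; the paper's approach, while heavier for this base case, sets up exactly the inductive machinery (Lemma~\ref{leslem}, Corollary~\ref{les2}, Corollary~\ref{submodcor}) that is then reused in the proof of the main Theorem~\ref{oddprop}, where the Euler-characteristic method is unavailable because homology is no longer concentrated in a single degree.
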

In \cite{Ks3} this result is stated with  the unnecessary hypothesis that $p$ is an odd prime.  We will need this result later and will give a proof that is slightly simpler than that of   \cite{Ks3} (but quite similar).

Using  results of C. Carre, we obtain an alternative formulation of Conjecture~\ref{topcon}. 
Since $\h_p^k - \e_k[\h_p] = \h_1^k[\h_p] -\e_k[\h_p] = (\h_1^k-\e_k)[\h_p]$
and $\h_1^k-\e_k$ is the Frobenius characteristic of a representation, we have that
$ \h_p^k - \e_k[\h_p] $ is the Frobenius characteristic of a representation.  Hence
$ \h_p^k - \e_k[\h_p] $ is Schur positive,  as are  $ \h_p^k$ and $\e_k[\h_p] $.    It follows that the coefficient of each Schur function in the Schur function expansion of
$\e_k[\h_p] $ is at most the corresponding coefficient in the expansion of $\h_p^k$.   Using  Pieri's rule repeatedly, we get the following result
(which is noted in \cite{Ca}).

\begin{proposition} \label{zero} For nonnegative integers $k,p,j,r$, if $r >k+\min\{j,1\}$ then
\[( \e_k[\h_p]h_j) |_r = 0.\]
\end{proposition}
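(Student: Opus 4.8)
The plan is to bound the number of parts of the partitions that can occur in the Schur expansion of $\e_k[\h_p]\h_j$, first by controlling $\e_k[\h_p]$ itself and then by tracking the effect of multiplying by $\h_j$ on partition lengths.

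First I would show that $\h_p^k|_r = 0$ for every $r > k$. Since $\h_p = \s_{(p)}$, one expands the product $\h_p^k = \s_{(p)}^k$ one factor at a time using Pieri's rule: each multiplication by $\s_{(p)}$ replaces a partition $\lambda$ by partitions $\mu$ with $\mu/\lambda$ a horizontal strip, and a horizontal strip has at most one cell in any given column, so $l(\mu) \le l(\lambda)+1$. Starting from the empty partition and iterating $k$ times, every partition with nonzero coefficient in $\h_p^k$ has at most $k$ parts, as claimed.

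Next I would invoke the Schur-positivity comparison recalled immediately before the proposition: $\h_p^k - \e_k[\h_p] = (\h_1^k - \e_k)[\h_p]$ is the Frobenius characteristic of an actual representation, hence Schur positive, so the coefficient of each $\s_\lambda$ in $\e_k[\h_p]$ is at most the corresponding coefficient in $\h_p^k$. Combined with the previous step, this gives $\e_k[\h_p]|_r = 0$ for $r > k$; that is, writing $\e_k[\h_p] = \sum_\lambda a_\lambda \s_\lambda$, we have $a_\lambda = 0$ whenever $l(\lambda) > k$.

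Finally I would apply Pieri's rule once more. In the expansion $\e_k[\h_p]\h_j = \sum_\mu b_\mu \s_\mu$, a nonzero $b_\mu$ forces $\mu/\lambda$ to be a horizontal strip of size $j$ for some $\lambda$ with $a_\lambda \neq 0$, hence with $l(\lambda) \le k$. If $j = 0$ then $\mu = \lambda$ and $l(\mu) \le k$; if $j \ge 1$ the horizontal-strip condition gives $l(\mu) \le l(\lambda)+1 \le k+1$. In both cases $l(\mu) \le k + \min\{j,1\}$, so $(\e_k[\h_p]\h_j)|_r = 0$ whenever $r > k + \min\{j,1\}$. The argument is routine; the only point requiring a moment's care is the elementary fact that adjoining a horizontal strip raises the number of parts by at most one, together with the trivial separate treatment of the case $j = 0$, which is exactly what accounts for the $\min\{j,1\}$ in the statement.
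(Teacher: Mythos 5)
Your proof is correct and follows exactly the route the paper indicates: compare $\e_k[\h_p]$ against $\h_p^k$ via the Schur-positivity of $(\h_1^k-\e_k)[\h_p]$, bound the number of parts in $\h_p^k$ by iterating Pieri's rule (each horizontal strip adds at most one row), and then account for the final factor $\h_j$, treating $j=0$ separately to get the $\min\{j,1\}$. The paper itself gives no more detail than ``Using Pieri's rule repeatedly,'' so your write-up is simply a fleshed-out version of the same argument.
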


 For a partition $\lambda=(\lambda_1\ge \lambda_2\ge \dots )$ and a
positive integer $r$, define $\lambda^{(r)}$ to be the partition 
obtained from $\lambda$ by adding $1$ to $\lambda_i$ for $1 \leq i
\leq r$, that is
$$\lambda^{(r)} = (\lambda_1+1\ge \lambda_2+1\ge \dots \ge \lambda_r+1 \ge \lambda_{r+1} \ge \dots).$$ (Here we are viewing the  partition $\lambda$ as an infinite sequence with all but the first $l(\lambda)$ entries equal to $0$.)  Equivalently, the Young diagram of $\lambda^{(r)}$ is
obtained from that of $\lambda$ by adding a box to each of the
first $r$ rows (including empty rows).  Define the linear map $\gamma_r$ on the ring of
symmetric functions by
\[
\gamma_r(\sum_{\lambda}a_\lambda \s_\lambda)=\sum_\lambda
a_\lambda \s_{\lambda^{(r)}}.
\]

\begin{theorem}[Carre, {\cite[Theorem 24(2)]{Ca}}]
For positive integers $p,r$, we have
\[
\e_r[\h_p]|_r=\gamma_r(\h_r[\h_{p-1}]).
\]
\label{carre}
\end{theorem}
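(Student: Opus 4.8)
The plan is to translate Theorem~\ref{carre} into a statement about multiplicities of irreducible $GL_r(\mathbb{C})$-modules and then obtain that statement by a ``leftmost-column removal.'' First, $\gamma_r$ is the linear map on symmetric functions sending $\s_\mu$ to $\s_{\mu^{(r)}}$, and as $\mu$ ranges over partitions of $r(p-1)$ with $\ell(\mu)\le r$ the partitions $\mu^{(r)}$ range exactly over the partitions of $rp$ with precisely $r$ parts. By Proposition~\ref{zero} (with $j=0$), every Schur component of $\e_r[\h_p]$ is indexed by a partition with at most $r$ parts, and the same holds for $\h_r[\h_{p-1}]$; hence $\e_r[\h_p]|_r=\gamma_r\bigl(\sum_\mu\langle\e_r[\h_p],\s_{\mu^{(r)}}\rangle\,\s_\mu\bigr)$, and the theorem is equivalent to
\[
\bigl\langle\e_r[\h_p],\,\s_{\mu^{(r)}}\bigr\rangle=\bigl\langle\h_r[\h_{p-1}],\,\s_\mu\bigr\rangle
\qquad\text{for every partition }\mu\vdash r(p-1)\text{ with }\ell(\mu)\le r.
\]

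To prove this I would work with polynomial representations of $GL_r:=GL_r(\mathbb{C})$. A homogeneous symmetric function of degree $d$, read in the $r$ variables $x_1,\dots,x_r$, is the character of a polynomial $GL_r$-module, and the coefficient of $\s_\nu$ (for $\ell(\nu)\le r$) is the multiplicity of the irreducible $\mathbb{S}_\nu(\mathbb{C}^r)$; in our case $\e_r[\h_p]$ and $\h_r[\h_{p-1}]$ are the characters of $\bigwedge^r\mathrm{Sym}^p(\mathbb{C}^r)$ and $\mathrm{Sym}^r\mathrm{Sym}^{p-1}(\mathbb{C}^r)$. Since $\mathbb{S}_{\mu^{(r)}}(\mathbb{C}^r)\cong\bigl(\bigwedge^r\mathbb{C}^r\bigr)\otimes\mathbb{S}_\mu(\mathbb{C}^r)$ and $\mu^{(r)}$ has exactly $r$ parts, the displayed equality is equivalent to an isomorphism of $GL_r$-modules
\[
\Bigl(\bigwedge^r\mathrm{Sym}^p(\mathbb{C}^r)\Bigr)^{\det}\;\cong\;\Bigl(\bigwedge^r\mathbb{C}^r\Bigr)\otimes\mathrm{Sym}^r\mathrm{Sym}^{p-1}(\mathbb{C}^r),
\]
where $M^{\det}$ denotes the sum of the constituents $\mathbb{S}_\nu(\mathbb{C}^r)$ of $M$ with $\ell(\nu)=r$. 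There is a natural candidate map from right to left: the multiplication $\mathbb{C}^r\otimes\mathrm{Sym}^{p-1}(\mathbb{C}^r)\rightarrow\mathrm{Sym}^p(\mathbb{C}^r)$ is split surjective with kernel $\mathbb{S}_{(p-1,1)}(\mathbb{C}^r)$, so it induces a split surjection $\bigwedge^r\bigl(\mathbb{C}^r\otimes\mathrm{Sym}^{p-1}(\mathbb{C}^r)\bigr)\rightarrow\bigwedge^r\mathrm{Sym}^p(\mathbb{C}^r)$; restricting to the $\lambda=(1^r)$ term $\bigl(\bigwedge^r\mathbb{C}^r\bigr)\otimes\mathrm{Sym}^r\mathrm{Sym}^{p-1}(\mathbb{C}^r)$ in Cauchy's decomposition of $\bigwedge^r$ of a tensor product yields a $GL_r$-map $\Phi$ whose image lies in the $\det$-divisible part. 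The whole content of the theorem is that $\Phi$ is an isomorphism onto $\bigl(\bigwedge^r\mathrm{Sym}^p(\mathbb{C}^r)\bigr)^{\det}$---equivalently, that ``delete the leftmost column'' is a bijection between a basis of the space of $GL_r$-highest-weight vectors of weight $\mu^{(r)}$ in $\bigwedge^r\mathrm{Sym}^p(\mathbb{C}^r)$ and one of highest-weight vectors of weight $\mu$ in $\mathrm{Sym}^r\mathrm{Sym}^{p-1}(\mathbb{C}^r)$, with inverse multiplication by $x_1\cdots x_r$.

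The main obstacle is exactly this bijectivity: a single wedge $m_1\wedge\cdots\wedge m_r$ of distinct degree-$p$ monomials whose total weight has all $r$ coordinates positive need not admit a system of distinct representatives matching each variable $x_i$ to some $m_j$ it divides, so the ``column deletion'' cannot be performed monomial-by-monomial and must be carried out on suitable linear combinations. Pinning this down requires either a straightening argument inside an explicit model of the highest-weight spaces---plethystic semistandard tableaux, or the $p$-ribbon tableau model---or the vertex-operator calculus of Carr\'e, which is the route I would follow. Along the way the base cases $r=1$ (both sides equal $\s_{(p)}$) and $p=2$ (where $\e_r[\h_2]|_r=\s_{(r+1,1^{r-1})}=\gamma_r(\h_r)$, in agreement with the known Schur expansion of $\e_r[\h_2]$), together with the case $r=2$, which via (\ref{e2}) and Pieri's rule is Theorem~\ref{kson1}, provide useful consistency checks.
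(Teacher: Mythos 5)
The paper does not prove this theorem: it is stated as a direct citation to Carr\'e \cite[Theorem 24(2)]{Ca}, so there is no internal proof to compare against. Your translation into $GL_r$-module language is correct and well organized --- the reduction to showing $\langle \e_r[\h_p], \s_{\mu^{(r)}}\rangle = \langle \h_r[\h_{p-1}], \s_\mu\rangle$ for $\ell(\mu)\le r$, the identification of $\gamma_r(\h_r[\h_{p-1}])$ with the character of $\bigl(\bigwedge^r\mathbb{C}^r\bigr) \otimes \mathrm{Sym}^r\mathrm{Sym}^{p-1}(\mathbb{C}^r)$, and the construction of the candidate map $\Phi$ from the $(1^r)$-term of the Cauchy decomposition of $\bigwedge^r\bigl(\mathbb{C}^r\otimes\mathrm{Sym}^{p-1}\mathbb{C}^r\bigr)$ are all sound, as is your observation that a naive monomial-by-monomial column deletion can fail for lack of a system of distinct representatives.

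But as you yourself concede, none of this yet proves the theorem. The assertion that $\Phi$ is an isomorphism onto the $\det$-divisible part is precisely the content of Carr\'e's result, and your proposal defers its verification to ``the vertex-operator calculus of Carr\'e, which is the route I would follow'' --- that is a citation of the target, not an argument. In particular, you do not show that $\Phi$ is injective, nor do you rule out $\det$-divisible constituents of $\bigwedge^r\mathrm{Sym}^p(\mathbb{C}^r)$ arising under the split surjection from Cauchy terms $\mathbb{S}_\lambda(\mathbb{C}^r)\otimes\mathbb{S}_{\lambda'}\bigl(\mathrm{Sym}^{p-1}\mathbb{C}^r\bigr)$ with $\lambda\neq(1^r)$, so surjectivity onto $\bigl(\bigwedge^r\mathrm{Sym}^p\mathbb{C}^r\bigr)^{\det}$ is also open. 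The setup is a good reformulation, but there is a genuine gap at the crux; an independent proof would have to actually carry out the highest-weight straightening or a weight-by-weight dimension count, and that is where all the work of Carr\'e's theorem lies.
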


By Pieri' s rule, Proposition~\ref{zero} and Theorem~\ref{carre}, we see that the following conjecture is equivalent to Conjecture~\ref{topcon}.
 
\begin{conjecture} \label{conj2} For  integers $k\ge 1$, $ p>1$ and $n=kp+1$, we have
\begin{equation} \label{secondconj}
\ch\rh_{k-1}(\hmc) = \gamma_{k+1}(\h_k[\h_{p-1}]).\end{equation}
\end{conjecture}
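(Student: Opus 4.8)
The plan is to recast Conjecture~\ref{conj2} as a statement about the top boundary map of $\hmc$ and then settle what remains by induction on $k$. First I would pin down the $\cc[S_n]$-module structure of the top two chain spaces. An oriented $(k-1)$-simplex of $\hmc$ is a matching with $k$ hyperedges that leaves a single point of $[n]$ uncovered, and its stabilizer, a copy of $S_k[S_p]\times S_1$, acts on the orientation line by $\mathrm{sgn}_k$ on the $k$ hyperedges and trivially on everything else; hence $\ch C_{k-1}(\hmc)=\e_k[\h_p]\h_1$, and likewise $\ch C_{k-2}(\hmc)=\e_{k-1}[\h_p]\h_{p+1}$. Since $\h_p^{k-1}$ is supported, by Pieri's rule, on partitions with at most $k-1$ parts, and $\h_p^{k-1}-\e_{k-1}[\h_p]$ is Schur positive (as in the discussion before Proposition~\ref{zero}), the function $\e_{k-1}[\h_p]\h_{p+1}$ is supported on partitions with at most $k$ parts. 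As $\partial_{k-1}\colon C_{k-1}(\hmc)\to C_{k-2}(\hmc)$ is $S_n$-equivariant, it annihilates the $\cs^\lambda$-isotypic part of $C_{k-1}(\hmc)$ whenever $l(\lambda)=k+1$; therefore $\rh_{k-1}(\hmc)\supseteq(\e_k[\h_p]\h_1)|_{k+1}$, which by Theorem~\ref{carre} and Pieri equals $\gamma_{k+1}(\h_k[\h_{p-1}])$. So Conjecture~\ref{conj2} is equivalent to the assertion that $\rh_{k-1}(\hmc)$ contains no simple summand $\cs^\mu$ with $l(\mu)\le k$.

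To prove that assertion I would induct on $k$, with base cases $k=1$ (equation~(\ref{basecase})) and $k=2$ (Theorem~\ref{kson1}), and with the case $p=2$ supplied by Bouc's complete description of $\rh_*(\mathsf M_2(n))$. For the inductive step, fix a point $a\in[n]$ and let $\Gamma=\mathsf M_p(n-1)$ be the subcomplex of matchings avoiding $a$. Because $n-1=kp$, the collapsing argument behind~(\ref{mpn}) gives $\rh_{k-1}(\Gamma)=0$, so the long exact sequence of the pair $(\hmc,\Gamma)$ identifies $\rh_{k-1}(\hmc)$ with the kernel of the connecting map $\rh_{k-1}(\hmc,\Gamma)\to\rh_{k-2}(\Gamma)$. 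The relative chain complex splits over the hyperedges of $\hmc$ through $a$, and identifying the link of a hyperedge with $\mathsf M_p(n-p)$ as in Section~\ref{hmpc} gives an $S_{n-1}$-isomorphism $\rh_{k-1}(\hmc,\Gamma)\cong(\mathbf 1_{S_{p-1}}\otimes\rh_{k-2}(\mathsf M_p(n-p)))\uparrow_{S_{p-1}\times S_{n-p}}^{S_{n-1}}$, in which $\rh_{k-2}(\mathsf M_p(n-p))$ is the top homology of $\mathsf M_p((k-1)p+1)$ and hence is known by the inductive hypothesis. Combining this with the branching rule, with the lower bound of the first paragraph, and with an equivariant Euler-characteristic computation for $\hmc$ (its reduced equivariant Euler characteristic being $\sum_{i=0}^{k}(-1)^{i-1}\e_i[\h_p]\h_{(k-i)p+1}$) reduces Conjecture~\ref{conj2} to an identity among Schur functions.

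The hard part is carrying out that reduction for a general prime $p$. The complex $\hmc$ is only about $\tfrac{n}{p+1}$-connected, far below dimension $k-2$, so its lower reduced homology groups genuinely fail to vanish and the Euler characteristic alone does not determine $\rh_{k-1}(\hmc)$; one also needs the near-top homology $\rh_{k-2}(\mathsf M_p(kp))$ of the ``perfect matching'' complexes. Most of all, one needs precise control of which irreducible constituents of $C_{k-1}(\hmc)$ survive in $\ker\partial_{k-1}$ rather than being cancelled by the image of $\partial_{k-1}$: the single restriction to $S_{n-1}$ used above eliminates constituents with more than $k+1$ parts but not those with exactly $k$ parts, so one must iterate the restriction (peeling off several points) or else match the top Betti number of $\hmc$ against $\dim(\e_k[\h_p]\h_1)|_{k+1}$ independently. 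For $p\le3$, and for $k\le2$, the base cases together with the clean form of $\h_k[\h_{p-1}]$ (for $p=3$, the sum of $\s_\lambda$ over partitions of $2k$ with all parts even) let the bookkeeping close; for $p\ge5$ it is precisely this bookkeeping that obstructs a proof, which is why the statement is left conjectural there.
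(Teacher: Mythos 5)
Your plan matches the paper's own treatment of this conjecture. The containment $\rh_{k-1}(\hmc)\supseteq(\e_k[\h_p]\h_1)|_{k+1}=\gamma_{k+1}(\h_k[\h_{p-1}])$ is precisely Corollaries~\ref{submod} and~\ref{submodcor} (with Proposition~\ref{zero} and Theorem~\ref{carre} supplying the equivalence of the two expressions), the point-removal long exact sequence you invoke is Lemma~\ref{leslem} and Corollary~\ref{les2}, and, just as you observe, the bookkeeping on constituents with at most $k$ parts closes only for $p\le 3$ (Theorem~\ref{oddprop}) and $k\le 2$ (Theorem~\ref{kson1}), which is exactly why the statement remains conjectural in general in the paper; your reference to the equivariant Euler characteristic is a cosmetic addition the paper uses only for the table, not in these proofs.
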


In the case $p=2$, equation (\ref{secondconj}) is equivalent to
\[ \rh_{k-1}(\mathsf M_2(2k+1)) \cong_{S_n} \cs^{(k+1,1^k)}, \]
which is a special case of  the following formula of Bouc\cite{Bo2} (see also \cite{DW}), giving the
homology of the matching complex  in each dimension,
\begin{equation} \label{bouc}  \rh_{r-1}(\mathsf M_2(n)) \cong_{S_n} 
\bigoplus_{\scriptsize\begin{array}{c}
\lambda:
\,\lambda
\vdash n
\\
\lambda  =
\lambda^\prime \\ d(\lambda) = n -2r \end{array}
} \cs^{\lambda},\end{equation}
where $\lambda^\prime$
denotes the conjugate of $\lambda$ and
$d(\lambda)$ denotes the size of the Durfee square of $\lambda$.

Before establishing the conjecture in the cases $p=3$ and $k=2$, we obtain some  information on $\rh_{k-1}(\mathsf M_p(n))$ for  arbitrary $p$, $n$ and $k = \lfloor {n \over p} \rfloor$.  
We understand the $\cc[S_n]$-module
$C_{r-1}(\hmc)$ for arbitrary $p,n$ and $r \leq \lfloor \frac{n}{p} \rfloor$.   It is induced from a one-dimensional (over
$\cc$) module $X$ of the stabilizer $G$ of an $(r-1)$-dimensional
face $F$. As noted above, this stabilizer is a direct product $H \times T$. Here $H$ is isomorphic to the wreath product $S_r[S_p]$, whose
kernel $K \cong S_p^r$ acts trivially on $X$.  A standard
complement $C$ to $K$ in $H$ permutes the $r$ components $S_p$ of
$K$ by conjugation in the same manner that it permutes the
vertices (hyperedges) of $F$, and $C$ acts on $X$ according to the
sign character of this permutation action.  The group $T \cong S_{n-rp}$ acts trivially on $X$.
We now have the following result.

\begin{proposition}
For integers $p,n \ge 2$ and $r \leq \lfloor \frac{n}{p} \rfloor$, the Frobenius characteristic of $C_{r-1}(\hmc)$ is given by
\[\ch C_{r-1}(\hmc) =
\e_r[\h_p]\h_{n-rp}.
\]
\label{cprop}
\end{proposition}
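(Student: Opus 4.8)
The statement identifies the Frobenius characteristic of the chain space $C_{r-1}(\hmc)$ as a permutation module, so the plan is to compute $\ch C_{r-1}(\hmc)$ directly as an induced character. First I would recall, exactly as set up in the paragraph preceding the proposition, that $S_n$ acts transitively on the set of $(r-1)$-faces of $\hmc$ (an ordered-by-nothing collection of $r$ pairwise disjoint $p$-sets), so $C_{r-1}(\hmc)$ is the permutation module on this orbit twisted by the orientation character; concretely it is $X\!\uparrow_G^{S_n}$ where $G=G_F = H\times T$ is the stabilizer of a fixed face $F$, with $H\cong S_r[S_p]$, $T\cong S_{n-rp}$, and $X$ the one-dimensional $\cc[G]$-module on which the kernel $K\cong S_p^r$ and the factor $T$ act trivially while a standard complement $C\cong S_r$ acts by the sign of its permutation of the $r$ hyperedges.

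Second, I would split the induction in stages, using transitivity of induction through $S_{\Omega^+}\times S_{\Omega^-}\cong S_{rp}\times S_{n-rp}$:
\[
X\!\uparrow_G^{S_n} \cong_{S_n} \bigl(X'\!\uparrow_H^{S_{rp}}\bigr)\otimes \bigl(\mathbf 1\!\uparrow_T^{S_{n-rp}}\bigr)\ \uparrow_{S_{rp}\times S_{n-rp}}^{S_n},
\]
where $X'$ is the restriction of $X$ to $H$. The second tensor factor is just the trivial $\cc[S_{n-rp}]$-module, with Frobenius characteristic $\h_{n-rp}$. For the first factor I would invoke the plethysm formalism reviewed in Section \ref{defsec}: $X'=\operatorname{sgn}_r[\mathbf 1_{S_p}]$ as a $\cc[S_r[S_p]]$-module, so its induction to $S_{rp}$ has Frobenius characteristic $\e_r[\h_p]$ (since $\operatorname{ch}\operatorname{sgn}_r = \e_r$ and $\operatorname{ch}\mathbf 1_{S_p}=\h_p$). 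Finally, induction from $S_{rp}\times S_{n-rp}$ to $S_n$ corresponds to multiplication of symmetric functions, giving $\ch C_{r-1}(\hmc) = \e_r[\h_p]\,\h_{n-rp}$, as claimed.

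The only point requiring a small argument is the identification of the action of $C$ on $X$: a face of $\hmc$ is an unordered set of $r$ disjoint $p$-sets, and an oriented simplex is such a set together with an ordering of its vertices modulo even permutations, so an element of $C$ permuting the hyperedges acts on the chosen orientation by the sign of that permutation — hence $X'\cong \operatorname{sgn}_r[\mathbf 1_{S_p}]$ and not $\mathbf 1_r[\mathbf 1_{S_p}]$. This is exactly the bookkeeping already carried out for $\rh_{k-1}(\dot F_{\mm(F)})$ in the discussion around equation (\ref{wreath}) (with the role of $\nu$ replaced here by the trivial module, since there is no inflation: each hyperedge contributes a single vertex, not $(p-2)!$ of them), so the verification is routine. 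I expect the main obstacle, such as it is, to be simply stating the stabilizer structure and the sign twist carefully enough that the plethysm $\e_r[\h_p]$ (rather than $\h_r[\h_p]$) comes out; once that is pinned down the computation is immediate from the Frobenius-characteristic dictionary of Section \ref{defsec}.
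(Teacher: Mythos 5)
Your proof is correct and follows exactly the paper's (implicit) argument: the paper establishes the same stabilizer structure $G_F = H \times T$ with $H \cong S_r[S_p]$, identifies $C_{r-1}(\hmc)$ as induced from the one-dimensional module on which $K$ and $T$ act trivially and the standard complement acts by sign, and then reads off $\e_r[\h_p]\h_{n-rp}$ from the plethysm dictionary. There is no substantive difference.
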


Now let  $p,n \ge 2$  and  $k = \lfloor \frac{n}{p} \rfloor$.  For $\lambda \vdash n$, define  $C_\lambda$ 
 to be the (direct) sum of all simple
submodules of the top chain space $C_{k-1}(\hmc)$ that are isomorphic to the Specht module $\cs^\lambda$
(and so have Frobenius characteristic $\s_\lambda$).  We can view $\rh_{k-1}(\hmc)$ as
a submodule of $C_{k-1}(\hmc)$ since $\dim\hmc=k-1$.

\begin{corollary}
The submodule $\bigoplus_{ l(\lambda)=k+1}C_\lambda$ of
$C_{k-1}(\hmc)$ is contained in $\rh_{k-1}(\hmc)$.
\label{submod}
\end{corollary}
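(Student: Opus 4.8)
The plan is to use the boundary map $\partial$ from the top chain space $C_{k-1}(\hmc)$ to $C_{k-2}(\hmc)$ and argue that the isotypic components $C_\lambda$ with $l(\lambda) = k+1$ lie in its kernel, which (since $\dim \hmc = k-1$, so there are no $(k)$-chains) is exactly $\rh_{k-1}(\hmc)$. First I would record that $\partial$ is an $S_n$-module homomorphism, so by Schur's lemma it maps each isotypic component $C_\lambda \subseteq C_{k-1}(\hmc)$ into the isotypic component of $C_{k-2}(\hmc)$ of the same type. Thus it suffices to show that no $\cc[S_n]$-irreducible $\cs^\lambda$ with $l(\lambda) = k+1$ occurs in $C_{k-2}(\hmc)$; then $\partial$ must kill $C_\lambda$ for such $\lambda$, giving $C_\lambda \subseteq \ker\partial = \rh_{k-1}(\hmc)$, and summing over all such $\lambda$ yields the corollary.

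The key computation is the Schur expansion of $\ch C_{k-2}(\hmc)$. By Proposition \ref{cprop} (applied with $r = k-1 \le \lfloor n/p \rfloor$) we have
\[
\ch C_{k-2}(\hmc) = \e_{k-1}[\h_p]\,\h_{n-(k-1)p}.
\]
Now I would invoke Proposition \ref{zero} with the parameters $k \rightsquigarrow k-1$, $j = n-(k-1)p$ and $r = k+1$: since $n - (k-1)p \geq 1$ (indeed $n \ge kp$ because $k = \lfloor n/p\rfloor$) we have $\min\{j,1\} = 1$, so $r = k+1 > (k-1) + 1 = k$, and Proposition \ref{zero} gives
\[
\bigl(\e_{k-1}[\h_p]\,\h_{n-(k-1)p}\bigr)\big|_{k+1} = 0.
\]
Hence no Schur function $\s_\lambda$ with $l(\lambda) = k+1$ appears in $\ch C_{k-2}(\hmc)$, i.e.\ no such $\cs^\lambda$ is a composition factor (equivalently, a submodule, since $\cc[S_n]$ is semisimple) of $C_{k-2}(\hmc)$.

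I do not anticipate a serious obstacle here; the only point requiring a little care is the bookkeeping on which instance of Proposition \ref{zero} to use and the verification that $n - (k-1)p \geq 1$, which is immediate from $k = \lfloor n/p \rfloor$. One should also note explicitly that $\rh_{k-1}(\hmc) = \ker\bigl(\partial\colon C_{k-1}(\hmc) \to C_{k-2}(\hmc)\bigr)$ because $C_k(\hmc) = 0$, so there is no image to quotient by; this is what lets us conclude membership in the homology from membership in the kernel of $\partial$. Finally, semisimplicity of $\cc[S_n]$ guarantees that $\bigoplus_{l(\lambda)=k+1} C_\lambda$ is well-defined as an internal direct sum inside $C_{k-1}(\hmc)$ and that "appears as a submodule" and "appears as a composition factor" coincide, closing the argument.
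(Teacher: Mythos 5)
Your proof is correct and follows essentially the same route as the paper's: both arguments identify $\rh_{k-1}(\hmc)$ with $\ker\partial$, use the $S_n$-equivariance of $\partial$ to reduce to showing that no $\cs^\lambda$ with $l(\lambda)=k+1$ occurs in $C_{k-2}(\hmc)$, and establish that via Proposition~\ref{cprop} together with Proposition~\ref{zero}. The paper's proof is just a terser statement of the same chain of reasoning (it additionally cites Pieri's rule, which is implicit in the form of Proposition~\ref{zero} you invoke).
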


\begin{proof}
If $l(\lambda)=k+1$ then $C_{k-2}({\mathsf M}_p(n))$ contains
no submodule isomorphic to $\cs^\lambda$ by Propositions~\ref{zero} and \ref{cprop} and Pieri's rule; so $C_\lambda$ is
contained in the kernel of the $(k-1)^{st}$ boundary map.
\end{proof}

\begin{corollary} \label{submodcor} For  integers $k\ge 1$, $ p>1$ and $n=kp+1$, we have
$$ 
\ch\rh_{k-1}(\hmc) = \gamma_{k+1}(\h_k[\h_{p-1}]) + \mathsf f = (\e_k[\h_p]\h_1)|_{k+1} + \mathsf f ,$$
where $\mathsf f$ is a symmetric function such that $\mathsf f|_r = 0$ for all $r \ge k+1$. Thus Conjecture~\ref {topcon}  (and \ref{conj2})  holds if and only if $\mathsf f =0$.  
\end{corollary}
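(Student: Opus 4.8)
The plan is to obtain this statement by bookkeeping from Proposition~\ref{cprop}, Corollary~\ref{submod} and Proposition~\ref{zero}, together with the identity $\gamma_{k+1}(\h_k[\h_{p-1}]) = (\e_k[\h_p]\h_1)|_{k+1}$ noted above (which follows from Pieri's rule, Proposition~\ref{zero} and Theorem~\ref{carre}, and is exactly what makes Conjectures~\ref{topcon} and~\ref{conj2} equivalent). Set $\mathsf f := \ch\rh_{k-1}(\hmc) - (\e_k[\h_p]\h_1)|_{k+1}$. By that identity the two displayed expressions agree and hence hold with this single $\mathsf f$, and the final ``if and only if'' is then immediate: Conjecture~\ref{topcon} asserts precisely $\mathsf f = 0$, and Conjecture~\ref{conj2} is equivalent to it. So everything reduces to proving that $\mathsf f|_r = 0$ for every $r \ge k+1$.

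First I would treat the range $r > k+1$. Since $n = kp+1$ we have $\dim\hmc = \lfloor n/p\rfloor - 1 = k-1$, so there is no $k$-chain group and $\rh_{k-1}(\hmc)$ is literally the cycle space, a $\cc[S_n]$-submodule of the top chain space $C_{k-1}(\hmc)$. By Proposition~\ref{cprop} (with $n - kp = 1$) we have $\ch C_{k-1}(\hmc) = \e_k[\h_p]\h_1$, and by Proposition~\ref{zero} this symmetric function has no Schur summand indexed by a partition of length greater than $k+1$. As $\cc[S_n]$ is semisimple, neither does the submodule $\rh_{k-1}(\hmc)$, so $\ch\rh_{k-1}(\hmc)|_r = 0$; and since $(\e_k[\h_p]\h_1)|_{k+1}$ involves only partitions of length $k+1$, we conclude $\mathsf f|_r = 0$.

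For $r = k+1$ I would invoke Corollary~\ref{submod}. By definition $\bigoplus_{l(\lambda)=k+1}C_\lambda$ is the length-$(k+1)$ isotypic part of $C_{k-1}(\hmc)$, and Corollary~\ref{submod} places it inside $\rh_{k-1}(\hmc)$; but $\rh_{k-1}(\hmc)$ itself sits inside $C_{k-1}(\hmc)$, and in a semisimple module the $\lambda$-isotypic part of a submodule is contained in the $\lambda$-isotypic part of the ambient module. Sandwiching therefore forces the length-$(k+1)$ isotypic parts of $\rh_{k-1}(\hmc)$ and of $C_{k-1}(\hmc)$ to coincide, so $\ch\rh_{k-1}(\hmc)|_{k+1} = \ch C_{k-1}(\hmc)|_{k+1} = (\e_k[\h_p]\h_1)|_{k+1}$ by Proposition~\ref{cprop}, whence $\mathsf f|_{k+1} = 0$. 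Together with the previous paragraph this gives $\mathsf f|_r = 0$ for all $r \ge k+1$, completing the argument.

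I do not foresee a genuine obstacle here: the corollary is essentially a repackaging of structural facts already in hand, and the only points needing a word of care are the observation that $\dim\hmc = k-1$ (so that $\rh_{k-1}(\hmc)$ really is a submodule of the top chain group) and the elementary remark that in a semisimple module isotypic components respect submodule inclusions. The truly hard assertion is that $\mathsf f = 0$ outright, i.e.\ that $\rh_{k-1}(\hmc)$ carries \emph{no} further Schur constituents of length $\le k$; that is precisely Conjecture~\ref{topcon}, and it is deliberately left open by this corollary.
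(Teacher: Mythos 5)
Your argument is correct and matches the paper's (implicit) reasoning: the paper states Corollary~\ref{submodcor} without a separate proof because, exactly as you spell out, it is a direct bookkeeping consequence of Proposition~\ref{cprop}, Proposition~\ref{zero}, Corollary~\ref{submod}, and the already-noted equivalence $\gamma_{k+1}(\h_k[\h_{p-1}]) = (\e_k[\h_p]\h_1)|_{k+1}$ coming from Pieri's rule and Theorem~\ref{carre}. Your two-case analysis ($r>k+1$ via the chain space having no Schur constituents of length $>k+1$; $r=k+1$ via sandwiching the length-$(k+1)$ isotypic part between Corollary~\ref{submod} and the containment $\rh_{k-1}\subseteq C_{k-1}$) is precisely the intended justification.
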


The following long exact sequence appears in the
thesis \cite{Ks1} of Ksontini and is a generalization of a
sequence used by Bouc for graph matching complexes in \cite{Bo}.
It is straightforward to show that this sequence is the standard
long exact sequence associated with the ($S_{n-1}$-equivariant)
embedding of ${\mathsf M}_p(n-1)$ into $\hmc$ determined by the
identity embedding of $[n-1]$ into $[n]$.

\begin{lemma}[{\cite[Proposition 4.12]{Ks1}}]
For positive integers $p,n>1$ there is a long exact sequence
\[
\begin{array}{l}
\ldots \rightarrow \rh_r({\mathsf M}_p(n-1)) \rightarrow
\rh_r(\hmc)\downarrow_{S_{n-1}}^{S_n} \rightarrow
\rh_{r-1}({\mathsf M}_p(n-p)) \uparrow_{S_{n-p} \times
S_{p-1}}^{S_{n-1}} \\ \rightarrow \rh_{r-1}({\mathsf M}_p(n-1))
\rightarrow \rh_{r-1}(\hmc)\downarrow_{S_{n-1}}^{S_n} \rightarrow
\ldots
\end{array}
\]
of $\cc[S_{n-1}]$-modules, where the action of the component
$S_{p-1}$ on $\rh_\ast({\mathsf M}_p(n-p))$ is trivial.
\label{leslem}
\end{lemma}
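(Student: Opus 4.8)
The plan is to exhibit the sequence as the (reduced) long exact sequence of the pair $(\hmc,{\mathsf M}_p(n-1))$. Here ${\mathsf M}_p(n-1)$ is regarded as a subcomplex of $\hmc$ via the identity embedding $[n-1]\hookrightarrow[n]$; concretely it is the full subcomplex of $\hmc$ spanned by those hyperedges that avoid $n$, and it is visibly $S_{n-1}$-invariant. The associated long exact sequence
\[
\cdots\to\rh_r({\mathsf M}_p(n-1))\to\rh_r(\hmc)\downarrow_{S_{n-1}}\to\rh_r(\hmc,{\mathsf M}_p(n-1))\to\rh_{r-1}({\mathsf M}_p(n-1))\to\cdots
\]
is a sequence of $\cc[S_{n-1}]$-modules, so it remains only to identify the relative term $\rh_r(\hmc,{\mathsf M}_p(n-1))$ with $\rh_{r-1}({\mathsf M}_p(n-p))\uparrow_{S_{n-p}\times S_{p-1}}^{S_{n-1}}$, the action of $S_{p-1}$ being trivial.

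For this I would examine the relative chain complex $C_*(\hmc,{\mathsf M}_p(n-1))$, whose natural basis consists of the faces of $\hmc$ not contained in ${\mathsf M}_p(n-1)$, that is, the matchings containing a necessarily unique hyperedge $E$ with $n\in E$. Writing $E=A\cup\{n\}$ with $A\in\binom{[n-1]}{p-1}$, such a face is the same datum as the pair consisting of $A$ together with a matching $F'$ of hyperedges inside $[n-1]\setminus A$, i.e.\ a face of ${\mathsf M}_p([n-1]\setminus A)$. Ordering the hyperedges of each matching so that the one through $n$ comes first, one checks that in the relative differential the term obtained by deleting $E$ is killed (it lies in ${\mathsf M}_p(n-1)$) while the remaining terms reproduce, up to a global sign, the boundary of $F'$; hence there is an isomorphism of chain complexes
\[
C_*(\hmc,{\mathsf M}_p(n-1))\;\cong\;\bigoplus_{A\in\binom{[n-1]}{p-1}}C_{*-1}({\mathsf M}_p([n-1]\setminus A)),
\]
where on the right one retains the empty matching so as to obtain \emph{reduced} chains. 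Passing to homology gives $\rh_r(\hmc,{\mathsf M}_p(n-1))\cong\bigoplus_{A}\rh_{r-1}({\mathsf M}_p([n-1]\setminus A))$, and each $[n-1]\setminus A$ has $n-p$ elements.

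It then remains to read off the $S_{n-1}$-action on this sum. The group $S_{n-1}$ acts transitively on the index set $\binom{[n-1]}{p-1}$, the stabilizer of a chosen $A_0$ being $S_{A_0}\times S_{[n-1]\setminus A_0}\cong S_{p-1}\times S_{n-p}$. On the corresponding summand $\rh_{r-1}({\mathsf M}_p([n-1]\setminus A_0))$ the factor $S_{[n-1]\setminus A_0}$ acts in the evident way, while $S_{A_0}$ only permutes points inside the hyperedge $E_0=A_0\cup\{n\}$, hence fixes that ``first'' hyperedge and every vertex of ${\mathsf M}_p([n-1]\setminus A_0)$, and so acts trivially (including on orientations). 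By the standard description of a module induced from the stabilizer of a point under a transitive action, this identifies $\rh_r(\hmc,{\mathsf M}_p(n-1))$ with $\rh_{r-1}({\mathsf M}_p(n-p))\uparrow_{S_{n-p}\times S_{p-1}}^{S_{n-1}}$ as a $\cc[S_{n-1}]$-module with $S_{p-1}$ acting trivially. Feeding this into the long exact sequence above yields the asserted sequence.

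The only point demanding genuine care is the chain-level step of the second paragraph: one must track the signs in the relative boundary map closely enough to see that the stated direct-sum decomposition really is an isomorphism of chain complexes (any sign discrepancy being a single global sign, which is harmless for homology and equivariance), and one must confirm that $S_{p-1}$ fixes the oriented relative chains and not merely up to sign. Everything else — the exactness and naturality of the long-exact-sequence machinery, together with the usual conventions for the small cases where $n-p<p$ — is formal.
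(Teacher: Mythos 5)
Your argument is correct and matches the paper's approach exactly: the paper simply remarks (citing Ksontini) that the sequence is the long exact sequence of the pair for the $S_{n-1}$-equivariant inclusion ${\mathsf M}_p(n-1)\hookrightarrow\hmc$, which is precisely what you use. What you have done is supply the routine details — the identification of the relative chains with $\bigoplus_A \widetilde{C}_{*-1}({\mathsf M}_p([n-1]\setminus A))$, the sign check, and the induction from $S_{p-1}\times S_{n-p}$ with trivial $S_{p-1}$-action — that the paper declares "straightforward" and omits.
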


By (\ref{mpn}) we have the following special case.
\begin{corollary}
For positive integers $k,p>1$, the long exact sequence of Lemma
\ref{leslem} with $n=kp+1$ is
\[
\begin{array}{l}
0 \rightarrow \rh_{k-1}(\hmco)\downarrow_{S_{kp}}^{S_{kp+1}}
\rightarrow \rh_{k-2}({\mathsf
M}_p((k-1)p+1))\uparrow_{S_{(k-1)p+1} \times S_{p-1}}^{S_{kp}} \\
\rightarrow \rh_{k-2}({\mathsf M}_p(kp)) \rightarrow \ldots.
\end{array}
\]
\label{les2}
\end{corollary}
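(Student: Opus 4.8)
The plan is to specialize the long exact sequence of Lemma~\ref{leslem} to $n = kp+1$ and then discard the vanishing terms. Recall that $\mathsf M_p(m)$ has dimension $\lfloor m/p\rfloor - 1$, so its reduced homology is supported in degrees $0$ through $\lfloor m/p\rfloor-1$. When $n=kp+1$ the three complexes appearing in Lemma~\ref{leslem} are $\hmco=\mathsf M_p(kp+1)$ (dimension $k-1$), $\mathsf M_p(n-1)=\mathsf M_p(kp)$ (dimension $k-1$), and $\mathsf M_p(n-p)=\mathsf M_p((k-1)p+1)$ (dimension $k-2$).

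First I would assemble the vanishing statements. From the dimension bound, $\rh_r(\mathsf M_p(kp+1))=0$ for $r\geq k$ and $\rh_r(\mathsf M_p((k-1)p+1))=0$ for $r\geq k-1$. The extra input is equation~(\ref{mpn}): since $p$ divides $kp$ and $kp\neq 0$, it gives $\rh_{k-1}(\mathsf M_p(kp))=0$, hence $\rh_r(\mathsf M_p(kp))=0$ for all $r\geq k-1$ as well. Substituting these into the long exact sequence of Lemma~\ref{leslem} with $n=kp+1$ kills every term lying at or before $\rh_{k-1}(\hmco)\downarrow_{S_{kp}}^{S_{kp+1}}$ in the sequence, with that single exception; in particular the term immediately preceding it, namely $\rh_{k-1}(\mathsf M_p(kp))$, is $0$.

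To finish, I would invoke exactness at $\rh_{k-1}(\hmco)\downarrow_{S_{kp}}^{S_{kp+1}}$: since the arrow coming into it originates from the zero group $\rh_{k-1}(\mathsf M_p(kp))$, the arrow leaving it, into $\rh_{k-2}(\mathsf M_p((k-1)p+1))\uparrow_{S_{(k-1)p+1}\times S_{p-1}}^{S_{kp}}$, is injective. The remainder of the sequence from this point onward is literally the lower portion of the sequence of Lemma~\ref{leslem}, unchanged, so we obtain exactly the displayed exact sequence. The argument is pure bookkeeping and presents no genuine obstacle; the one point to watch is that (\ref{mpn}) is being applied to $\mathsf M_p(kp)$ and not to $\mathsf M_p(kp+1)$, whose top homology is precisely the object of interest and which need not vanish.
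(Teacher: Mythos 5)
Your argument is correct and is precisely the reasoning the paper compresses into the single remark ``By (\ref{mpn}) we have the following special case'': the only nontrivial point is that $\rh_{k-1}(\mathsf M_p(kp))=0$ because $p\mid kp$, which you identify and apply correctly, the remaining vanishings being dimension bounds. Your closing caveat — that (\ref{mpn}) is applied to $\mathsf M_p(kp)$ rather than to $\mathsf M_p(kp+1)$ — is exactly the right thing to keep straight.
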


\begin{proof}[Proof of Theorem~\ref{kson1}]  By Corollary \ref{les2}, we have a long exact sequence
\[
0 \rightarrow \rh_{1}({\mathsf
M}_p(2p+1))\downarrow_{S_{2p}}^{S_{2p+1}} \rightarrow
\rh_{0}({\mathsf M}_p(p+1))\uparrow_{S_{p+1} \times
S_{p-1}}^{S_{2p}} \rightarrow \ldots.
\] 
By (\ref{basecase}) and Pieri's rule, the $\cc[S_{2p}]$-module $\rh_{0}({\mathsf M}_p(p+1))\uparrow_{S_{p+1} \times
S_{p-1}}^{S_{2p}} $ is isomorphic to the direct sum of Specht modules $\cs^\mu$ over all partitions $\mu \vdash 2p$ of length 2 or 3, such that if $l(\mu) = 3$ then the smallest part of $\mu $ must be $1$.  Each Specht module has multiplicity 1 in this decomposition.  Our exact sequence shows that
$\rh_{1}({\mathsf
M}_p(2p+1))\downarrow_{S_{2p}}^{S_{2p+1}}$ embeds into
$\rh_{0}({\mathsf M}_p(p+1))\uparrow_{S_{p+1} \times
S_{p-1}}^{S_{2p}} $.  Hence each Specht module  in  $\rh_{1}({\mathsf
M}_p(2p+1))\downarrow_{S_{2p}}^{S_{2p+1}}$ must be of the form just described and must also have multiplicity $1$.  

Assume for contradiction that $\rh_{1}({\mathsf
M}_p(2p+1))$ has a submodule isomorphic to $\cs^\lambda$, where $l(\lambda) \le 2$.  Since the restriction of $\cs^\lambda$ must be isomorphic to a submodule of $\rh_{0}({\mathsf M}_p(p+1))\uparrow_{S_{p+1} \times
S_{p-1}}^{S_{2p}} $, the partition $\lambda$ can't have length 1; so it must have length 2 and consist of an even part and an odd part.   By reducing the even part by one, we get a partition $\mu\vdash 2p $ with 2 odd parts, $\mu_1,\mu_2$.   By the branching rule, the restriction $\rh_{1}({\mathsf
M}_p(2p+1))\downarrow_{S_{2p}}^{S_{2p+1}}$ has a submodule isomorphic to $\cs^\mu$.

Let   $\tau=(\mu_1,\mu_2,1)$.  By 
Corollary~\ref{submodcor}, equation~(\ref{e2}) and Pieri's rule, $\rh_{1}({\mathsf
M}_p(2p+1))$ has a submodule isomorphic to $\cs^\tau$.  Hence by the branching rule, $\rh_{1}({\mathsf
M}_p(2p+1))\downarrow_{S_{2p}}^{S_{2p+1}}$ has an additional submodule isomorphic to $\cs^\mu$, contradicting the multiplicity $1$ requirement.   

It follows that if $\cs^\lambda$ is isomorphic to a submodule of  $\rh_{1}({\mathsf
M}_p(2p+1))$, then $\lambda$ has length 3. 
Hence by Corollary~\ref{submodcor}
$$\ch \rh_{1}({\mathsf
M}_p(2p+1)) = \e_2[\h_p]h_1 |_{3} = \sum_{\scriptsize\begin{array}{c} (\lambda_1,\lambda_2)\vdash 2p \\ \lambda_1,\lambda_2 \text{ odd}\end{array} } \s_{(\lambda_1,\lambda_2,1)}$$
\end{proof}

Now we turn our attention to the case $p=3$ of the conjecture.  For a partition
$\lambda=(\lambda_1,\ldots,\lambda_t)$, write $2\lambda$ for the
partition  $(2\lambda_1,\ldots,2\lambda_t)$.  It is known (see for
example \cite[Example A2.9]{St}) that
\[
\h_k[\h_2]=\sum_{\lambda \vdash k}\s_{2 \lambda}.
\]
Hence 
\begin{equation}\label{oddeq} \gamma_{k+1}( \h_k[\h_2]) = \sum_{\lambda \in \Lambda(k)} \s_\lambda,
\end{equation}
where $\Lambda(k)$ is the set of all partitions of $3k+1$ into $k+1$ odd
parts. 
It follows that  the next result is the  case $p=3$ of  Conjecture~\ref{conj2}.
\begin{theorem}
Let $k$ be any positive integer.  Then 
\[ \rh_{k-1}(\mathsf M_3(3k+1)) \cong_{S_{3k+1}} 
\bigoplus_{\lambda \in \Lambda(k)} \cs^\lambda.
\]
 \label{oddprop}
\end{theorem}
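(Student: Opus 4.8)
The plan is to prove, by induction on $k$, the equivalent statement $\ch\rh_{k-1}(\mathsf M_3(3k+1))=\sum_{\lambda\in\Lambda(k)}\s_\lambda$, which is the case $p=3$ of Conjecture~\ref{conj2} in view of (\ref{oddeq}) and Pieri's rule. The base cases $k=1$ and $k=2$ are already established by (\ref{basecase}) and Theorem~\ref{kson1}. So I would fix $k\ge 3$ and assume the theorem with $k$ replaced by $k-1$, i.e.\ $\rh_{k-2}(\mathsf M_3(3k-2))\cong_{S_{3k-2}}\bigoplus_{\mu\in\Lambda(k-1)}\cs^\mu$. By Corollaries~\ref{submod} and \ref{submodcor} (with $p=3$) the submodule $\bigoplus_{l(\lambda)=k+1}C_\lambda$ of $\rh_{k-1}(\mathsf M_3(3k+1))$ has Frobenius characteristic $\gamma_{k+1}(\h_k[\h_2])=\sum_{\lambda\in\Lambda(k)}\s_\lambda$, and
\[
\ch\rh_{k-1}(\mathsf M_3(3k+1))=\sum_{\lambda\in\Lambda(k)}\s_\lambda+\mathsf f ,
\]
where $\mathsf f$ is the characteristic of the (semisimple) quotient module, hence Schur-positive, and $\mathsf f|_r=0$ for all $r\ge k+1$; thus $\mathsf f$ is supported on $\s_\kappa$ with $\kappa\vdash 3k+1$ and $l(\kappa)\le k$. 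The whole problem is to show $\mathsf f=0$.

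Feeding the inductive hypothesis into Corollary~\ref{les2} with $n=3k+1$ yields an $S_{3k}$-equivariant embedding
\[
\rh_{k-1}(\mathsf M_3(3k+1))\!\downarrow_{S_{3k}}^{S_{3k+1}}\ \hookrightarrow\ W:=\bigoplus_{\mu\in\Lambda(k-1)}\cs^\mu\!\uparrow_{S_{3k-2}\times S_2}^{S_{3k}},\qquad \ch W=\h_2\sum_{\mu\in\Lambda(k-1)}\s_\mu ,
\]
where, as in Corollary~\ref{les2}, the component $S_2$ acts trivially on $\cs^\mu$. By Pieri's rule $\cs^\mu\!\uparrow=\bigoplus_\nu\cs^\nu$ over partitions $\nu$ with $\nu/\mu$ a horizontal $2$-strip, and since $l(\mu)=k$ each such $\nu$ has $k$ or $k+1$ parts. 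Since the submodule $\bigoplus_{l(\lambda)=k+1}C_\lambda$ of $\rh_{k-1}(\mathsf M_3(3k+1))$ also restricts into $W$, comparing multiplicities of irreducibles gives, coefficientwise in the Schur basis,
\[
\mathsf f\!\downarrow_{S_{3k}}\ \preceq\ \ch W-\mathbf r\bigl(\textstyle\sum_{\lambda\in\Lambda(k)}\s_\lambda\bigr),
\]
where $\mathbf r=\h_1^{\perp}$ is the restriction operator $\s_\lambda\mapsto\sum_{\lambda^-}\s_{\lambda^-}$, the sum being over partitions $\lambda^-$ obtained from $\lambda$ by deleting a removable box.

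The combinatorial core is to determine $\ch W-\mathbf r(\sum_{\lambda\in\Lambda(k)}\s_\lambda)$ precisely. Using the $p=3$ identities $\h_k[\h_2]=\sum_{\sigma\vdash k}\s_{2\sigma}$ and $\h_{k-1}[\h_2]=\sum_{\tau\vdash k-1}\s_{2\tau}$, one can compute both terms directly via Pieri's rule and the branching rule. The outcome I expect is that the length-$(k+1)$ components of $\ch W$ and of $\mathbf r(\sum_{\Lambda(k)}\s_\lambda)$ coincide --- both equal $\sum_\nu\s_\nu$ over length-$(k+1)$ partitions $\nu$ of $3k$ with exactly one even part $\nu_i\ge 2$ (all other parts odd), with $\nu_{k+1}=1$ and $\nu_{i-1}>\nu_i$ whenever $i\ge 2$ --- so that $\ch W-\mathbf r(\sum_{\Lambda(k)}\s_\lambda)$ is a \emph{Schur-positive} combination of $\s_\nu$ with $\nu$ a partition of $3k$ of length exactly $k$. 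Moreover one identifies these $\nu$ explicitly: the all-odd ones with at least two distinct part values that are $\ge 3$ (appearing with coefficient one less than the number of such values), and the ones with exactly two even parts, which are then necessarily \emph{distinct}, subject to the partition and horizontal-strip side conditions forced by the Pieri expansion. In particular, every $\nu$ occurring has at least two distinct part values and at most two even parts, and no $\nu$ occurring has two equal even parts as its only even parts.

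Finally I would rule out $\mathsf f\neq 0$. Suppose $\mathsf f=\sum_\kappa c_\kappa\s_\kappa$ with some $c_\kappa>0$. For each removable box of $\kappa$ the resulting partition $\kappa^-$ has $\s_{\kappa^-}$-coefficient at least $c_\kappa>0$ in $\mathsf f\!\downarrow_{S_{3k}}$, hence $\kappa^-$ lies in the support of $\ch W-\mathbf r(\sum_{\Lambda(k)}\s_\lambda)$; by the previous paragraph every $\kappa^-$ then has exactly $k$ parts, which forces $l(\kappa)=k$ and $\kappa_k\ge 2$. Since $|\kappa|=3k+1$ and $l(\kappa)=k$, the number of even parts of $\kappa$ is odd; since deleting a removable box toggles the parity of one part and every $\kappa^-$ has zero or two even parts, $\kappa$ has exactly one or exactly three even parts, and $\kappa$ is not all-equal because $3k+1$ leaves remainder $1$ on division by $k$ while $k\ge 2$. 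A direct case analysis --- isolating the few partitions that can occur (e.g.\ when $\kappa$ has three even parts, all parts must be even, which happens only for $k=3$ and two explicit $\kappa$), and then comparing the all-odd corner of $\kappa$ and an odd-valued-row corner of $\kappa$ against the explicit support described above --- always exhibits a removable box of $\kappa$ whose $\kappa^-$ is excluded (typically one with two equal even parts, or one all-odd with a single part value $\ge 3$). This contradiction gives $\mathsf f=0$, and hence $\ch\rh_{k-1}(\mathsf M_3(3k+1))=\sum_{\lambda\in\Lambda(k)}\s_\lambda$, completing the induction. I expect the main obstacle to be precisely this last case analysis, together with the exact determination of the ``two even parts'' part of the support of $\ch W-\mathbf r(\sum_{\Lambda(k)}\s_\lambda)$ (the relevant Case~B of the Pieri expansion carries fiddly partition and strip conditions); everything else is routine manipulation of induced modules, Pieri's rule and the branching rule.
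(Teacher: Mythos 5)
Your proposal is correct and follows essentially the same route as the paper: the exact sequence of Corollary~\ref{les2}, induction on $k$, Corollary~\ref{submodcor} to supply the length-$(k{+}1)$ part, and a Pieri/branching multiplicity comparison that forces the remaining summand $\mathsf f$ to vanish. The paper's version of the final step is more targeted --- it pins down the only candidate shapes $\lambda$ with $l(\lambda)=k$ (forced to be $(4,3,\ldots,3)$ or $(5,3,\ldots,3,2)$ for $k>3$, plus $(6,2,2),(5,3,2),(4,4,2)$ when $k=3$) and then exhibits one forbidden restriction for each, whereas you propose first computing the full Schur support of $\ch W$ minus the restriction of $\sum_{\lambda\in\Lambda(k)}\s_\lambda$ and then running the parity/removable-box case analysis; the resulting contradictions are the same, and the support computation you ``expect'' does in fact come out as you describe.
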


\begin{proof}
We may assume $k\ge 3$  since the result for $k \le 2$ has already been established  by the isomorphism (\ref{basecase}) and  Theorem~\ref{kson1}.  By Corollary \ref{submodcor} and equation (\ref{oddeq}), we need only show that if $\rh_{k-1}(\hmcot)$ has a submodule
isomorphic to $\cs^\lambda$ then $\lambda$ has at least $k+1$ parts.   By Corollary~\ref{les2}, we have a long exact sequence
\[
0 \rightarrow \rh_{k-1}({\mathsf
M}_3(3k+1))\downarrow_{S_{3k}}^{S_{3k+1}} \rightarrow
\rh_{k-2}({\mathsf M}_3(3k-2))\uparrow_{S_{3k-2} \times
S_2}^{S_{3k}} \rightarrow \ldots.
\]
By inductive hypothesis, the $\cc[S_{3k-2}]$-module
$\rh_{k-2}({\mathsf M}_3(3k-2))$ is the direct sum of the Specht
modules $\cs^\lambda$ over all partitions $\lambda$ of $3k-2$ into
$k$ odd parts.  It follows from Pieri's rule that if $\mu$ is a
partition of $3k$ and $\rh_{k-2}({\mathsf
M}_3(3k-2))\uparrow_{S_{3k-2} \times S_2}^{S_{3k}}$ has a
submodule isomorphic to $\cs^\mu$ then
\begin{itemize}
\item $\mu$ has either $k$ or $k+1$ parts, and \item $\mu$ has at
most two even parts.
\end{itemize}

Our exact sequence shows that
$\rh_{k-1}(\hmcot)\downarrow_{S_{3k}}^{S_{3k+1}}$ embeds into
$\rh_{k-2}({\mathsf M}_3(3k-2))\uparrow_{S_{3k-2} \times
S_2}^{S_{3k}}$, and it follows from the branching rule (and simple
arithmetic) that if $\lambda$ is a partition of $3k+1$ and
$\rh_{k-1}(\hmcot)$ has a submodule isomorphic to $\cs^\lambda$ then
\begin{itemize}
\item $\lambda$ has at least $k$ parts, and \item if
$\lambda=(\lambda_1 \ge \dots \ge \lambda_k)$ then $\lambda_k \in
\{2,3\}$.
\end{itemize}
(Note that when $\lambda$ as above has $k$ parts, we cannot have
$\lambda_k=1$ since then an irreducible constituent of the
restriction of $\cs^\lambda$ has $k-1$ parts.)

If $\lambda=(\lambda_1,\ldots,\lambda_k)$ is a partition of $3k+1$
with $\lambda_k=3$ then $\lambda=(4,3,\ldots,3)$.  Assume for
contradiction that in this case $\rh_{k-1}(\hmcot)$ has a submodule
isomorphic to $\cs^\lambda$.  The restriction of this submodule to
$S_{3k}$ has a submodule isomorphic to $\cs^{(3,\ldots,3)}$ by the
branching rule.  By Corollary  \ref{submodcor} and equation (\ref{oddeq}), we know that
$\rh_{k-1}(\hmcot)$ has a submodule isomorphic to
$\cs^{(3,\ldots,3,1)}$ and the restriction of this submodule to
$S_{3k}$ produces an additional submodule isomorphic to
$\cs^{(3,\ldots,3)}$.  On the other hand, by Pieri's rule,
$\rh_{k-2}({\mathsf M}_3(3k-2))\uparrow_{S_{3k-2} \times
S_2}^{S_{3k}}$ has a unique submodule isomorphic to
$\cs^{(3,\ldots,3)}$, namely, a constituent of the module induced
from $\cs^{(3,\ldots,3,1)}$.  This gives the desired contradiction.

Finally, say $\lambda=(\lambda_1,\ldots,\lambda_k)$ is a partition
of $3k+1$ with $\lambda_k=2$. If $k=3$ then the only possibilities for  $\lambda$  are $ (6,2,2), (5,3,2)$ and $(4,4,2)$.  The Young diagram  $(5,2,2)$ can be obtained from either of the first two possibilities by removing a cell, and the  Young diagram  $(4,4,1)$ can be obtained from the third possibility by removing a cell.   However, again using Pieri's rule we see that no  submodule of $\rh_{k-2}({\mathsf
M}_3(3k-2))\uparrow_{S_{3k-2} \times S_2}^{S_{3k}}$ is isomorphic to $\cs^{(5,2,2)}$ or $\cs^{(4,4,1)}$.
 (One cannot add two boxes in the
same column of a Young diagram, so one cannot obtain two equal
even parts from a partition into odd parts.)

Now suppose $k>3$.  Since no restriction of $\cs^\lambda$ can have more than $2$ even parts, it follows
that $\lambda_k=2$ is the only even part of $\lambda$, and
$\lambda=(5,3,\ldots,3,2)$.  Now $\cs^{(5,3,\ldots,3,2,2)}$ is a
submodule of the restriction of $\cs^\lambda$.  However, again using
Pieri's rule, we see that no submodule of $\rh_{k-2}({\mathsf
M}_3(3k-2))\uparrow_{S_{3k-2} \times S_2}^{S_{3k}}$ is isomorphic
to $\cs^{(5,3,\ldots,3,2,2)}$.   By this final
contradiction, we see that if $\rh_{k-1}(\hmcot)$ has a submodule
isomorphic to $\cs^\lambda$ then $\lambda$ has at least $k+1$ parts.
\end{proof}

The next result now follows from Theorem~\ref{topcor} and the isomorphism between the complexes $\mathsf C_3(n)$ and $\mathsf M_3(n)$.

\begin{corollary} Let $k$ be any positive integer.  Then 
\[ \rh_{k-1}(\de  \mathcal A_3(3k+1)) \cong_{S_{3k+1}}  \rh_{k-1}(\de  \mathsf C_3(3k+1)) \cong_{S_{3k+1}} 
\bigoplus_{\lambda \in \Lambda(k)} \cs^\lambda.
\]
\end{corollary}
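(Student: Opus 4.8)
The plan is simply to concatenate three identifications, all of which are already available in the excerpt. First I would apply Theorem~\ref{topcor} with $p=3$ and $n=3k+1$. Here $p-3=0$, so the hypothesis $j\ge t-(p-3)$ in part (2) reduces to $j\ge t$; since $t=\lfloor\frac{3k+1}{3}\rfloor-1=k-1$ and homology vanishes above the dimension, part (2) gives
\[
\rh_{k-1}(\de\ca_3(S_{3k+1}))\cong_{S_{3k+1}}\rh_{k-1}(\mathsf C_3(3k+1)).
\]

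Next I would use the isomorphism $\mathsf C_3(n)\cong\mathsf M_3(n)$ recorded in the introduction. Concretely, the map sending each $3$-subset $X\subseteq[n]$ to the unique cyclic subgroup of order $3$ in $S_n$ with support $X$ is a bijection between the vertex set of $\mathsf M_3(n)$ and that of $\mathsf C_3(n)$; it carries a collection of pairwise disjoint $3$-sets to a collection of cyclic $3$-subgroups with pairwise disjoint supports, which automatically generate an abelian group, so it is a simplicial isomorphism. Moreover it intertwines the natural action of $S_n$ on $[n]$ with the conjugation action of $S_n$ on its cyclic $3$-subgroups, so the induced isomorphism on reduced homology is one of $\cc[S_n]$-modules. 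Hence $\rh_{k-1}(\mathsf C_3(3k+1))\cong_{S_{3k+1}}\rh_{k-1}(\mathsf M_3(3k+1))$.

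Finally, Theorem~\ref{oddprop} identifies the last module as $\bigoplus_{\lambda\in\Lambda(k)}\cs^\lambda$, and composing the three isomorphisms yields the corollary. There is no real obstacle at this stage: the substance lies entirely in Theorems~\ref{topcor} and \ref{oddprop}, and the present statement is a bookkeeping consequence. The only point deserving a moment's care is checking that the vertex bijection above commutes with the two relevant $S_n$-actions, so that all three isomorphisms are genuinely $S_n$-equivariant rather than merely linear.
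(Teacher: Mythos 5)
Your proof is correct and follows the same route as the paper: invoke Theorem~\ref{topcor} with $p=3$, use the simplicial isomorphism $\mathsf C_3(n)\cong\mathsf M_3(n)$, and conclude with Theorem~\ref{oddprop}. The extra care you take to check $S_n$-equivariance of the vertex bijection is exactly what the paper leaves implicit.
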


The following result of  Athanasiadis on  nonvanishing homology of $\mathsf M_n(p)$ enables one to 
obtain   precise information on   $\rh_r(\mathsf M_n(p))$ when $n$ is small relative to $p$.

\begin{theorem}[Athanasiadis \cite{At}] \label{athan} For $n,p \ge 2$, the
homology of $\hmc$ vanishes below dimension $ \lfloor {n  -p\over p+1} \rfloor$.
\end{theorem}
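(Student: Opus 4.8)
The conclusion is equivalent to the statement that $\rh_r({\mathsf M}_p(n))=0$ for every $r\le\nu(n)-1$, where I abbreviate $\nu(n):=\lfloor\tfrac{n-p}{p+1}\rfloor$. The plan is to induct on $n$ using the long exact sequence of Lemma~\ref{leslem}, which for every $r$ provides an exact triple
\[
\rh_r({\mathsf M}_p(n-1))\to\rh_r(\hmc)\downarrow_{S_{n-1}}^{S_n}\to\rh_{r-1}({\mathsf M}_p(n-p))\uparrow_{S_{n-p}\times S_{p-1}}^{S_{n-1}},
\]
strung together by the connecting maps. For $n\le 2p$ one has $\nu(n)\le0$, so the assertion is vacuous (indeed ${\mathsf M}_p(n)$ is then a discrete point set); this gives the base of the induction.

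For the inductive step, fix $n\ge 2p+1$ and $r\le\nu(n)-1$; it suffices to kill the two outer terms of the exact triple. The link term dies for free: since $\tfrac{n-p}{p+1}-\tfrac{n-2p}{p+1}=\tfrac{p}{p+1}<1$ we have $\nu(n)-\nu(n-p)\le1$, hence $r-1\le\nu(n)-2\le\nu(n-p)-1$ and $\rh_{r-1}({\mathsf M}_p(n-p))=0$ by induction, so its induced module vanishes too. As for the deletion term, $\nu(n)=\nu(n-1)$ unless $(p+1)\mid(n-p)$; thus when $n\not\equiv p\bmod(p+1)$ we get $r\le\nu(n-1)-1$ and $\rh_r({\mathsf M}_p(n-1))=0$ by induction. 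Hence the exact triple forces $\rh_r(\hmc)\downarrow_{S_{n-1}}^{S_n}=0$, and therefore $\rh_r(\hmc)=0$, for all $n$ outside the residue class $n\equiv p\bmod(p+1)$.

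It remains to handle $n=q(p+1)-1$, where $\nu(n)=q-1$, $\nu(n-1)=q-2$, and $n-p=(q-1)(p+1)$ with $\nu(n-p)=q-2$. The argument just given still kills $\rh_r(\hmc)$ for every $r\le q-3$, so only $r=q-2=\nu(n)-1$ survives. Here $\rh_{q-3}({\mathsf M}_p(n-p))=0$ by induction, so the exact sequence collapses to
\[
\rh_{q-2}({\mathsf M}_p(n-p))\uparrow_{S_{n-p}\times S_{p-1}}^{S_{n-1}}\ \xrightarrow{\ f\ }\ \rh_{q-2}({\mathsf M}_p(n-1))\longrightarrow\rh_{q-2}(\hmc)\downarrow_{S_{n-1}}^{S_n}\longrightarrow 0,
\]
and everything reduces to showing that $f$ is surjective. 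Unwinding the construction behind Lemma~\ref{leslem}, $f$ is assembled (up to induction and sign) from the inclusion-induced maps $\rh_{q-2}({\mathsf M}_p([n-1]\setminus D))\to\rh_{q-2}({\mathsf M}_p(n-1))$, $D$ ranging over the $(p-1)$-subsets of $[n-1]$ (note $|[n-1]\setminus D|=(q-1)(p+1)$); concretely, surjectivity of $f$ says that every homology class of ${\mathsf M}_p(n-1)$ in degree $q-2$ becomes a boundary once the vertex $n$ is adjoined, since the link of any $p$-set $E\ni n$ is ${\mathsf M}_p([n]\setminus E)$ and its star in $\hmc$ is a cone. I expect this surjectivity to be the crux of the whole argument; everything preceding it is bookkeeping with the exact sequence and the floor function.

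To prove it one can strengthen the induction so that it simultaneously asserts, for ground sets in the relevant range, that inclusions ${\mathsf M}_p([m]\setminus D)\hookrightarrow{\mathsf M}_p(m)$ with $|D|\le p-1$ are onto in the critical degree; peeling the elements of $D$ off one at a time and composing the resulting surjections reduces matters to the single-vertex case, where one argues at the chain level by coning the matchings appearing in a given cycle through the removed vertex — possible because $(q-1)(p+1)$ vertices leave enough room to complete the required partial matchings. An alternative that sidesteps the chain-level filling is to bypass Lemma~\ref{leslem} in this case and instead build an explicit acyclic matching on the face poset of $\hmc$ in the spirit of discrete Morse theory — for instance, collapse each non-critical matching along the smallest (in a fixed linear order on $p$-sets) $p$-set available at the least vertex of $[n]$ it leaves uncovered — and check that in dimensions $\le\nu(n)-1$ the empty face is the only critical one. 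Either way, the combinatorics confronted is exactly that which makes the naive induction break down precisely on the residue class $n\equiv p\bmod(p+1)$.
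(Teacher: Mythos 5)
The paper does not prove this theorem; it cites it from Athanasiadis~\cite{At} and uses it as a black box, so there is no in-paper proof to compare against, and I am evaluating your argument on its own terms.

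Your bookkeeping with the long exact sequence of Lemma~\ref{leslem} is correct: writing $\nu(n)=\lfloor\frac{n-p}{p+1}\rfloor$, the link term $\rh_{r-1}(\mathsf{M}_p(n-p))$ dies by induction for all $r\le\nu(n)-1$ since $\nu(n)-\nu(n-p)\le 1$, and the deletion term $\rh_r(\mathsf{M}_p(n-1))$ dies by induction whenever $n\not\equiv p\pmod{p+1}$. You also correctly isolate the obstruction: for $n=q(p+1)-1$ and $r=\nu(n)-1=q-2$ the exact sequence yields $\rh_{q-2}(\hmc)=0$ only if the map $f\colon\rh_{q-2}(\mathsf{M}_p(n-p))\uparrow\,\to\,\rh_{q-2}(\mathsf{M}_p(n-1))$ is surjective, equivalently only if the inclusion $\mathsf{M}_p(n-1)\hookrightarrow\hmc$ induces the zero map on $\rh_{q-2}$.

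That surjectivity is the entire content of the theorem, and you do not prove it. Neither remedy you sketch closes the gap. The coning idea fails as stated: to cone a cycle $z\in Z_{q-2}(\mathsf{M}_p(n-1))$ through a hyperedge $E\ni n$, you need $E$ disjoint from every hyperedge occurring in $z$, and a $(q-2)$-cycle will in general involve hyperedges covering all of $[n-1]$, leaving no room for $E$. One would first have to rewrite $z$ at the chain level as a sum of cycles each supported on few enough vertices, and that rewriting is precisely the hard step. The discrete-Morse alternative is only a pointer: you neither verify that the proposed matching is acyclic nor that its critical cells all lie in dimension $\ge\nu(n)$, and there is no reason either verification should be routine. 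In short, the proposal is an outline with a genuine gap at its core, a gap you yourself flag as the ``crux.'' For what it is worth, Athanasiadis's argument in \cite{At} does not go through the deletion long exact sequence at all, but rather through a decomposition of the complex, and it yields the stronger homotopy-connectivity statement of which the homology vanishing quoted here is a corollary.
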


\begin{corollary} \label{athancor} If $k \le p+3$ then the homology of $\hmco$  is nonvanishing in at most two  dimensions.
\end{corollary}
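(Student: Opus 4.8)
The plan is to combine two bounds on the range of dimensions in which $\rh_*(\hmco)$ can be nonzero: an upper bound coming from the dimension of the complex, and a lower bound coming from Theorem~\ref{athan}. Writing $n=kp+1$, we have $\dim\hmco=\lfloor\frac{kp+1}{p}\rfloor-1=k-1$, so $\rh_r(\hmco)=0$ for $r>k-1$. On the other hand, applying Theorem~\ref{athan} with this value of $n$, the homology of $\hmco$ vanishes below dimension $\lfloor\frac{kp+1-p}{p+1}\rfloor=\lfloor\frac{(k-1)p+1}{p+1}\rfloor$. Hence $\rh_r(\hmco)$ can be nonzero only for $\lfloor\frac{(k-1)p+1}{p+1}\rfloor\le r\le k-1$.

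It then remains to check that the hypothesis $k\le p+3$ forces $\lfloor\frac{(k-1)p+1}{p+1}\rfloor\ge k-2$, since this says exactly that the admissible range of dimensions has size at most two, namely $\{k-2,k-1\}$ or a subset. The identity $(k-1)p+1=(k-1)(p+1)-(k-2)$ gives $\frac{(k-1)p+1}{p+1}=(k-1)-\frac{k-2}{p+1}$. For $k\ge 2$ the hypothesis $k\le p+3$ is equivalent to $0\le k-2\le p+1$, so $0\le\frac{k-2}{p+1}\le 1$ and therefore $\lfloor\frac{(k-1)p+1}{p+1}\rfloor\ge(k-1)-1=k-2$, as needed; the case $k=1$ is immediate since the fraction is then negative of absolute value less than $1$, so the floor equals $k-1$ and the homology is concentrated in a single dimension.

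I do not expect any genuine obstacle here: once Theorem~\ref{athan} is invoked, everything reduces to a one-line floor computation together with a check of the small degenerate cases $k=1,2$ (where one simply notes that the complex has homology in exactly one dimension). The only thing worth stating carefully is the floor estimate above and the verification that the interval $[\lfloor\frac{(k-1)p+1}{p+1}\rfloor,\,k-1]$ is in fact nonempty, which follows from $(k-1)p+1\le(k-1)(p+1)$ for $k\ge 1$.
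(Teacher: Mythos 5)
Your argument is correct and is exactly the (unstated) proof the paper intends: combine $\dim\hmco=k-1$ with the connectivity bound of Theorem~\ref{athan} and verify via the floor estimate $\lfloor\frac{(k-1)p+1}{p+1}\rfloor\ge k-2$ that the window of possibly-nonvanishing dimensions has length at most two when $k\le p+3$. The only trivial slip is the parenthetical remark in the $k=1$ case, where the relevant quantity $\frac{(k-1)p+1}{p+1}=\frac{1}{p+1}$ is positive rather than negative, but your conclusion that the floor equals $k-1=0$ is still right.
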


We obtain the table below giving all nonvanishing $\rh_r(\mathsf M_3(n))$ for  $ n
\leq 13$ by  using Maple to compute the right hand side of  the equivariant Euler-Poincar\'e formula, 
$$ \sum_{r=0}^{\lfloor {n \over 3}\rfloor} (-1)^r \ch \rh_{r-1}({\mathsf M}_3(n)) = \sum_{r = 0}^{\lfloor {n \over 3}\rfloor} (-1)^r \e_r[\h_3] \h_{n-3r} .$$  The computation of homology for $n \neq 10,13$ then follows from Theorem~\ref{athan}, which guarantees that  the left hand side has at most one nonzero term. When $n=10,13$, the computation  follows from Theorem~\ref{oddprop} and Corollary~ \ref{athancor}.

\[
\begin{array}{|c|c||c|}
\hline n & r & \rh_r({\mathsf M}_3(n)) \\ \hline \hline 4 & 0 & \cs^{(3,1)}
\\ \hline 5 & 0 & \cs^{(4,1)} \oplus \cs^{(3,2)} \\ \hline 6 & 0 &
\cs^{(4,2)} \\ \hline 7 & 1 & \cs^{(5,1,1)} \oplus \cs^{(3,3,1)} \\
\hline 8 & 1 & \cs^{(6,1,1)} \oplus \cs^{(5,2,1)} \oplus \cs^{(4,3,1)}
\oplus \cs^{(3,3,2)} \oplus \cs^{(5,3)} \\ \hline 9 & 1 & \cs^{(6,2,1)}
\oplus \cs^{(5,3,1)} \oplus \cs^{(4,3,2)} \oplus \cs^{(5,4)} \\ \hline
10 & 1 & \cs^{(5,5)} \\ 
\hline 10 & 2 & \cs^{(7,1,1,1)} \oplus
\cs^{(5,3,1,1)} \oplus \cs^{(3,3,3,1)} \\
 \hline11& 2 & \cs^{(8,1,1,1)}\oplus \cs^{(7,3,1)}\oplus \cs^{(7,2,1,1)}\oplus \cs^{(6,4,1)}\oplus \cs^{(6,3,2)}
      \oplus \cs^{(6,3,1,1)}\\ & &  \oplus \cs^{(5,4,2)}\oplus \cs^{(5,4,1,1)}\oplus \cs^{(5,3,3)} \oplus \cs^{(5,3,2,1)}
  \oplus \cs^{(4,3,3,1)} \oplus \cs^{(3,3,3,2)}\\
 \hline 12 &2 & \cs^{(8,2,1,1)}\oplus \cs^{(7,4,1)} \oplus \cs^{(7,3,2)}\oplus \cs^{(7,3,1,1)} \oplus \cs^{(6,5,1)} \oplus \cs^{(6,4,2)} \\ & &  \oplus \cs^{(6,4,1,1)} \oplus \cs^{(6,3,3)}\oplus \cs^{(6,3,2,1)}\oplus \cs^{(5,5,2)} \oplus
\cs^{(5,4,3)}\oplus \cs^{(5,4,2,1)}\\& & \oplus \cs^{(5,3,3,1)}\oplus \cs^{(4,3,3,2)} \\
 \hline 13 & 2 & \cs^{(7,5,1)} \oplus \cs^{(7,3,3)}\oplus \cs^{(6,5,2)}\oplus \cs^{(5,5,3)} \\
 \hline 13 & 3 & \cs^{(9,1,1,1,1)}\oplus \cs^{(7,3,1,1,1)}\oplus \cs^{(5,5,1,1,1)}
 \oplus \cs^{(5,3,3,1,1)} \oplus \cs^{(3,3,3,3,1)} \\
 \hline
\end{array}
\]

Upon looking at this table one might be tempted to conjecture that whenever $n \equiv 1 \mod p$, the Specht modules in the decomposition of each homology (not just the top homology)  are
multiplicity free.  Indeed, this holds when $p=2$ by Bouc's result (\ref{bouc}).  However, this is not the case, as the multiplicity of $\cs^{(7,5,3,1)}$ in $\rh_3(\mathsf M_{3}(16))$ is two.  Also, while it is the case that for each $k \leq 4$, every Specht module appearing as a submodule of $\rh_{k-2}(\mathsf M_{3}(3k+1))$ is indexed by a partition with $k-1$ parts, this fails for $k=5$, as $\rh_3(\mathsf M_{3}(16))$ has a submodule isomorphic to $\cs^{(7,6,3)}$.  (Similarly, using (\ref{bouc}), one sees that for $k \leq 5$, every Specht module appearing as a submodule of $\rh_{k-2}(\mathsf M_{2}(2k+1))$ is indexed by a partition with $k-1$ parts, but this fails for $k=6$.)  The given examples involving $\rh_3(\mathsf M_{3}(16))$ can be derived using the technique described above.

\end{document}